\definecolor{Red}{rgb}{1,0,0}
\definecolor{Blue}{rgb}{0,0,1}
\definecolor{Olive}{rgb}{0.41,0.55,0.13}
\definecolor{Yarok}{rgb}{0,0.5,0}
\definecolor{Green}{rgb}{0,1,0}
\definecolor{MGreen}{rgb}{0,0.8,0}
\definecolor{DGreen}{rgb}{0,0.55,0}
\definecolor{Yellow}{rgb}{1,1,0}
\definecolor{Cyan}{rgb}{0,1,1}
\definecolor{Magenta}{rgb}{1,0,1}
\definecolor{Orange}{rgb}{1,.5,0}
\definecolor{Violet}{rgb}{.5,0,.5}
\definecolor{Purple}{rgb}{.75,0,.25}
\definecolor{Brown}{rgb}{.75,.5,.25}
\definecolor{Grey}{rgb}{.5,.5,.5}
\definecolor{Grey}{rgb}{.5,.5,.5}
\definecolor{Pink}{rgb}{1,0,1}
\definecolor{DBrown}{rgb}{.25,.17,.08}
\definecolor{DRed}{rgb}{0.5,0,0}
\definecolor{Black}{rgb}{0,0,0}
\newcommand{\PP}{\mathcal{P}}
\newcommand{\E}{\mathbb{E}}
\newcommand{\R}{\mathbb{R}}
\newcommand{\G}{\mathbb{G}}
\DeclareMathOperator*{\argmin}{argmin}
\DeclareMathOperator*{\argmax}{argmax}
\def\eps{\epsilon}
\newcommand{\defeq}{{\stackrel{def}{=}}}
\newcommand{\paren}[1]{{\left( {#1} \right)}}
\newcommand{\floor}[1]{{\left\lfloor {#1} \right\rfloor}}
\newcommand{\ceil}[1]{{\left\lceil {#1} \right\rceil}}
\newcommand{\bracket}[1]{{\left[ {#1} \right]}}
\newcommand{\braces}[1]{{\left\{ {#1} \right\}}}
\newcommand{\abs}[1]{{\left| {#1} \right|}}
\newcommand{\ip}[2]{{\left \langle {#1} , {#2}\right\rangle}}
\newcommand{\W}[1]{{W[{#1}]}}
\newcommand{\oracle}{\eps_k^{(O)}}
\newcommand{\horacle}{\hat \eps_k^{(O)}}
\newcommand{\approxerr}{{\nu}}
\newcommand{\Hn}{H_n}
\newcommand{\HnW}{\Hn(W)}
\newcommand{\Q}{Q}
\newcommand{\B}{{\mathcal{B}}}
\newcommand{\A}{{\mathcal{A}}}
\newcommand{\Lap}{{\text{\rm Lap}}}
\newcommand{\hscore}{{\widehat{\text{score}}}}
\newcommand{\hrho}{{\hat \rho}}
\newcommand{\maxH}{R}
\newcommand{\trho}{{\tilde \rho}}
\newtheorem{theorem}{Theorem}
\newtheorem{remark}{Remark}
\newtheorem{lemma}{Lemma}
\newtheorem{prop}{Proposition}
\newtheorem{coro}{Corollary}
\newtheorem{Defi}{Definition}
\newtheorem{defn}[Defi]{Definition}
\newtheorem{definition}[Defi]{Definition}
\newcommand{\drop}[1]{\relax}
\newcommand{\ignore}[1]{\relax}
\newcommand{\mypar}[1]{\paragraph{{#1}.}}
\author{Christian Borgs\thanks{Microsoft Research New England,
    Cambridge, MA, USA. Email: \url{{cborgs,jchayes}@microsoft.com}} \and
  Jennifer T. Chayes\footnotemark[1] \and Adam Smith\thanks{Pennsylvania
    State University, University Park, PA, USA. Email:
    \url{asmith@psu.edu}. Supported by NSF award IIS-1447700 and a Google Faculty Award. Part of this work was done
    while visiting Boston University's Hariri Institute for
    Computation and Harvard University's Center for Research on
    Computation and Society.}
}
\begin{document}

\title{%Differentially
Private Graphon Estimation for Sparse Graphs}

\maketitle

\begin{abstract}
We design algorithms for fitting a high-dimensional statistical model
to a large, sparse network without revealing sensitive information of individual members.
Given a sparse input graph $G$, our
algorithms output a node-differentially-private nonparametric block model approximation.  By node-differentially-private, we mean that our output hides the insertion or removal of a vertex and all its adjacent edges.
If $G$ is an instance of the network obtained from a generative
nonparametric model defined in terms of a graphon $W$, our model
guarantees consistency, in the sense that as the number of
vertices
tends to infinity,
the output of our algorithm converges to $W$ in an appropriate version
of the $L_2$ norm.
In particular, this means we can estimate the sizes of all multi-way
cuts in $G$.

Our results hold as long as $W$ is bounded, the average degree of $G$
grows at least like the log of the number of vertices, and
the number of blocks goes to infinity
at an appropriate rate.
We give explicit error bounds in terms of the parameters of the
model; in several settings, our bounds improve on or match known
nonprivate results.
\end{abstract}

\newpage
{
\tableofcontents
}
\newpage

\section{Introduction}
\mypar{Differential Privacy}
Social and communication networks have been the subject of intense study over the
last few years.  However, while these networks comprise a rich source of
information for science, they also contain highly sensitive private information. What kinds of information can we release about these networks while preserving the
privacy of their users? Simple measures,
such as removing
obvious identifiers, do not
work; for example, several studies (e.g., \citep{BDK07,NS09})
reidentified individuals in the graph of a social network even after all vertex and
edge  attributes were removed.
Such attacks highlight the need for statistical and learning
algorithms that provide rigorous privacy guarantees.

 Differential privacy \citep{DMNS06},
which emerged from a line of work started by \cite{DiNi03},
provides meaningful guarantees in the
presence of arbitrary side information. In a traditional statistical
data set, where each person corresponds to a single record (or row of
a table), differential privacy guarantees that adding or removing any
particular person's data will not noticeably change the distribution
on the analysis outcome. There is now a rich and deep literature on
differentially private methodology for learning and other algorithmic
tasks; see \cite{DworkRoth14book} for a recent tutorial.
% Differential privacy \citep{DMNS06} provides meaningful guarantees in the
% presence of arbitrary side information.  In the context of traditional statistical
% data sets, differential privacy is now well-developed.
By contrast, differential privacy in the context of graph data is
 much less developed.  There are two main variants of  graph differential privacy:
\emph{edge} and \emph{node} differential privacy. Intuitively, edge differential privacy ensures that an algorithm's output does not reveal the inclusion or removal of a particular edge in the graph, while node differential privacy hides the inclusion or removal of a node together with all its adjacent edges.
Edge privacy is a weaker notion (hence
easier to achieve) and has been studied more extensively, with
particular emphasis on the release of individual graph statistics
\citep{NRS07,RHMS09,KRSY11,MirW12,LuM14exponential,KarwaSK14}, the
degree distribution \citep{HayLMJ09,HayRMS10,KarwaS12,LinKifer13,KarwaS15}, and
data structures for estimating the edge density of all cuts in a graph
\citep{GRU12,BlockiBDS12jl}.
Several authors designed edge-differentially private algorithms for
fitting generative graph models
\citep{MirW12,KarwaSK14,LuM14exponential,KarwaS15,XiaoCT14}, but these do
not appear to generalize to node privacy with meaningful
accuracy guarantees.

The stronger notion, node privacy, corresponds more closely to what
 was achieved in the case of traditional data sets, and to what one would
 want to protect an individual's data:  it
ensures that \emph{no matter what an analyst observing the released
information knows ahead of time}, she learns the
same things about an individual  Alice regardless of whether
Alice's data are used or not.
%(see \cite{KS08} for a formal statement).
In particular, no assumptions are needed on the way the
individuals' data are generated (they need not even be
independent).   Node privacy was studied more recently
\citep{KNRS13,ChenZhou13,BBDS13, RaskhodnikovaS15}, with a focus on
on the release of descriptive statistics (such as the number of
triangles in a graph).
Unfortunately, differential privacy's stringency
makes the design of accurate, node-private algorithms challenging.

In this work, we provide the first algorithms for node-private
inference of a high-dimensional statistical model that does not admit
simple sufficient statistics.

\mypar{Modeling Large Graphs via Graphons}
Traditionally, large graphs have been modeled using various parametric models,
one of the most popular being the stochastic block  model
%, introduced by
%Holland, Laskey and Leinhardt in 1983.
\citep{HLL83}.
Here one
postulates that an observed graph was generated by first assigning
vertices at random to one of $k$ groups, and then
connecting two vertices with a probability that depends on the
groups  the two vertices are members of.

% As te number of vertices grows larger and larger, it is not
% reasonable to expect that
As the number of vertices of the graph in question grows, we
  do not expect the graph
 to be well described by a stochastic block model with a fixed
number of blocks.   %Here a nonparametric model is more appropriate.
In this paper we consider nonparametric models (where the number of
parameters need not be fixed or even finite) given in terms of a
\emph{graphon}. A graphon is a measurable, bounded function $W:[0,1]^2\to
[0,\infty)$ such that $W(x,y)=W(y,x)$, which for convenience we take
to be normalized: $\int W=1$.  Given a graphon, we
generate a graph on $n$ vertices by first assigning i.i.d. uniform
labels in $[0,1]$ to the vertices, and then connecting vertices with
labels $x,y$ with probability $\rho_n W(x,y)$, where $\rho_n$ is a
parameter determining the density of the generated graph $G_n$
with $\rho_n\|W\|_\infty\leq 1$.   We call $G_n$ a
$W$-random graph with target density $\rho_n$ (or simply a
$\rho_nW$-random graph).

To our knowledge, random graph models of the above form
 were first introduced
under the name latent
position graphs \citep{HRH02}, and are special cases of a more general
model of ``inhomogeneous random graphs'' defined in \cite{BJR07},
which is the first place were $n$-dependent target densities $\rho_n$
were considered.  For both dense graphs (whose target density does not
depend on the number of vertices) and sparse graphs (those for which
$\rho_n\to 0$ as $n\to\infty$), this model is related to the
theory of convergent graph sequences
\citep{BCLSV06,BCLSV08,BCLSV12,BCCZ14a,BCCZ14b}.
%\cite{BCLSV08,BCCZ14a,LS06}.
  For dense graphs it
was first explicitly proposed in \cite{LS06},   %even
though
% , as observed in
% \cite{DJ08},
it can be
%implicitly traced back to the much earlier work of \citet{H79} and \citet{A81}.
implicitly traced back to \cite{H79,A81}, where models of this form appear as extremal points of
two-dimensional exchangeable arrays; see \cite{DJ08}
(roughly, their results relate graphons to exchangeable arrays the way de Finetti's theorem relates i.i.d. distributions to
exchangeable sequences).  % Due to this connection, graphon models are often called
% exchangeable graph models, even though the connection is slightly less
% direct if $\rho_n\to 0$.
For sparse graphs,  \cite{CF15} offers a different nonparametric approach.
%involving exchangeability.

\mypar{Estimation and Identifiability}
Assuming that $G_n$ is generated in this way, we are then faced with
the task of estimating $W$ from a {\it single observation} of a graph $G_n$.
To our knowledge, this task was first explicitly considered in
\cite{BC09}, %\rho_n has to grow faster than log n / n
which considered graphons describing stochastic block models with a
fixed number of blocks.  This was generalized to models with a growing
number of blocks \cite{RCY11,CWA12},
 %\rho_n must be almost constant (technically, larger than 1/log n
%and \cite{CWA12}, %\rho_n has to grow faster than poly log n / n
while the first estimation of the nonparametric model was proposed in
\cite{BCL11}. %average degree can be as low as constant
%under the assumption that none of the eigenfunctions of the operator
%associated with the kernel $W$ is orthogonal to the constant function.
Various other estimation methods were proposed recently, for example
\cite{LOGR12, %dense
 TSP13, %poly log degrees
 LR13, %implicitly constant density
 WO13, % poly log degrees
 CA14,%constant density
 ACC13, YangHA14, % assumes that expected degree uniquely identifies real
            % number in [0,1]
 GaoLZ14, % optimal rates in dense case
ABH14,
 Chatterjee15,
AS15known,AS15unknown}.
These works make various assumptions on the function $W$, the most common one
being that after a measure-preserving transformation, the integral of
$W$ over one variable is a strictly monotone function of the other,
corresponding to an asymptotically strictly monotone degree
distribution of $G_n$. (%Note that
This assumption is quite
restrictive: in particular, such results do not apply to graphons
that represent block models.)
For our purposes, the most relevant works are
\citet{WO13,GaoLZ14,Chatterjee15} and \citet{AS15unknown}, which provide consistent estimators
without
monotonicity assumptions
%for block models where $k$ grows with $n$
(see ``Comparison to
nonprivate bounds'', below).

% For our purpose, the most relevant paper
%  is \cite{WO13}, which allows for densities which go to zero, as long as they don't fall faster than a polylog divided by $n$.
%  While no assumptions on the monotonicities of degrees are needed, \cite{WO13} do need the slightly unnatural assumption that
%  $W$ is bounded away from both $0$, in addition to some smoothness assumptions.
{\setlength{\abovedisplayskip}{9pt}\setlength{\belowdisplayskip}{9pt}
One issue that makes estimation of graphons challenging is
\emph{identifiability}: multiple graphons can lead to the same
distribution on $G_n$.
% Before summarizing our contribution, we need to discuss the the fact
% that in general, $W$ can not be uniquely determined from the
% observation of even the full sequence $G_n$, a problem called the
% identifiability issue in the literature, see, e.g., \cite{BC09,CA14}.
Specifically, two graphons $W$ and $\tilde W$ lead to the
same distribution on $W$-random graphs if and only if there are measure preserving
maps $\phi,\tilde\phi:[0,1]\to[0,1]$ such that $W^\phi=\widetilde
W^{\widetilde \phi}$, where $W^\phi$ is defined by
$W(x,y)=W(\phi(x),\phi(y))$ \cite{DJ08,BCL10}.
Hence, there is no
``canonical graphon'' that an estimation procedure can output, but rather
an equivalence class of graphons.
Some of the literature circumvents identifiability by making strong
additional assumptions, such as strict monotonicity, that imply the
existence of  canonical
equivalent class representatives. We make no such assumptions,
but instead
%We
define consistency in terms of a metric
on these equivalence classes, rather than on graphons as functions.
We % do this by defining the following
use a
variant of the $L_2$ metric,
 \begin{equation}
   \label{delta-def}
\delta_2(W,W')=\inf_{\phi:[0,1]\to[0,1] }
%\text{ measure-preserving  %   bijection }[0,1]\to[0,1]}
 \|W^\phi-W'\|_2\, .
\end{equation}
where $\phi$ ranges over measure-preserving bijections.
% \footnote{Using the results of \cite{DJ08} and \cite{BCL10}, it is easy to see that two graphons $W$ and $\widetilde W$ lead to the same distribution $G_n$ if and only if $\delta_2(W,\widetilde W)=0$, and that this holds if and only if there
% are measure preserving maps $\phi,\widetilde \phi:[0,1]\to[0,1]$ such that $W^\phi=\widetilde W^{\widetilde \phi}$.}
%To prove consistency, all we need is to
%find representatives $\hat W$ which converge to $W$ in the distance $\delta_2$.
%This framework has the advantage that we don't need any additional assumptions on $W$ to prove consistency, except that it is a bounded, measurable, symmetric function from $[0,1]^2\to \R_+$, see below.

}

\subsection{Our Contributions}

In this paper we construct an algorithm that produces
 an estimate $\hat W$ from a single instance
$G_n$ of a $W$-random graph with target density $\rho_n$
(or simply $\rho$, when $n$
is clear from the context).
We aim for several properties:
\begin{compactenum}%\renewcommand{\labelenumi}{(\arabic{enumi})}
\item $\hat W$ is differentially private;
\item
$\hat W$ is consistent, in the sense that
$\delta_2(W,\hat W)\to 0$  in probability as $n\to\infty$;
%, where the distance $\delta_2$ between two graphons is defined in \eqref{delta-def}.
\item $\hat W$ has a compact representation (in our case, as a matrix
  with $o(n)$ entries);
%  in the sense that it is given in terms of a finite set of real numbers
% for a given approximation error in the distance $\delta_2$;
\item The procedure works for sparse graphs, that is, when the
  density $\rho$ is small;
% we will settle
%   for consistency in the regime $n \rho = \omega(\log n)$;
\item On input $G_n$, $\hat W$ can be calculated efficiently.
\end{compactenum}

Here we give an estimation procedure that obeys the
first four properties, leaving the question of polynomial-time
algorithms for future
work.  Given an input graph $G_n$, a privacy-parameter $\eps$ and a
target number $k$ of blocks, our algorithm $\A$ produces a $k$-block graphon $\hat W=\A(G_n)$
such that
 \begin{itemize}
 \item $\A$ is $\eps$-differentially node private. The privacy
   guarantee holds for all inputs, independent of modeling assumptions.
 \item Assume that
   \begin{inparaenum}\renewcommand{\labelenumi}{(\arabic{enumi})}
   \item
     $W$ is an arbitrary graphon, normalized so $\int W =1$;
   \item the
     expected average degree $(n-1)\rho$ grows at least as fast as $\log
     n$; and
\item $k$ goes to infinity sufficiently slowly with $n$.
   \end{inparaenum}
   Then when $G_n$ is $\rho W$-random, the estimate $\hat W$ for $W$
   is \emph{consistent} (that is, $\delta_2(\hat W,W)\to 0$, both in
   probability and almost surely).
\end{itemize}
Combined with the general theory
of convergent graphs sequences, these result in particular give a node-private procedure
for estimating the edge density of all cuts in a $\rho W$-random graph,
see \eqref{cut-bound} in Section~\ref{sec:W-rand}
below.

The main idea of our algorithm is to use the exponential mechanism of
\cite{MT07} to select a block model which approximately minimizes the
$\ell_2$ distance to the observed adjacency matrix of $G$, under the
best possible assignment of nodes to blocks (this explicit search over
assignments makes the algorithm take exponential time). %{\blue But}
In
order to get an algorithm that is accurate on sparse graphs, we need several
nontrivial extensions of current techniques.
To achieve privacy, we use a new variation of the
Lipschitz extension technique of \cite{KNRS13,ChenZhou13} to reduce the
sensitivity of the $\delta_2$ distance. While those works used
Lipschitz extensions for noise addition, we use of Lipshitz extensions inside the
``exponential mechanism''~\cite{MT07} (to control the
sensitivity of the score functions). To bound our algorithm's error, we provide
a new analysis of the $\ell_2$-minimization algorithm; we show that approximate minimizers
are not too far from the actual minimizer (a ``stability'' property).
Both aspects of our work are enabled by restricting
the $\ell_2^2$-minimization to a set of block models whose density (in fact,
$L_\infty$ norm) is not much larger than that of the underlying graph.
% First, we will restrict
% the $\ell_2^2$-minimization to a set of block models whose density (in fact,
% $L_\infty$ norm) is not much larger than that of the underlying graph.
% Second, we use a new variation of the
% Lipschitz extension techniques of \cite{KNRS13} to reduce the
% sensitivity of the $\delta_2$ distance. And finally, we present
% a new analysis of the $\ell_2$-minimization algorithm which allows us
% to show that approximate minimizations lead to minimizers which
% are not too far from the actual minimizer.
The algorithm is presented
in Section~\ref{sec:estimation}.

Our most general result proves consistency for arbitrary graphons $W$ but does not
provides a concrete rate of convergence. However, we provide explicit rates under various assumptions on $W$.
Specifically, we relate the
error of our estimator to two natural error terms involving the graphon $W$:
the error $\oracle(W)$ of the best $k$-block
approximation to $W$ in the $L_2$ norm (see \eqref{eps-k} below) and  an error term $\eps_n(W)$
measuring the $L_2$-distance between the  graphon $W$
and the matrix of probabilities $\HnW$ generating the graph $G_n$
(see \eqref{eps-n} below.)
In terms of these error terms, Theorem~\ref{thm:final}
shows
\begin{equation}
\delta_2\Bigl(W,\hat W\Bigr)
  \leq \oracle(W) +  2 \eps_n(W) +
   O_P\paren{ \sqrt[4]{\frac{\log k}{\rho n}
%+\frac {\lambda^2k^2}{\rho n^2}
}
 +\sqrt{\frac{k^2\log n}{n\eps}} + \frac{1}{\rho \eps n}}.\label{eq:main-bound}
\end{equation}

Along the way, we provide a novel analysis of a straightforward,
nonprivate least-squares estimator, whose error bound has a better
dependence on $k$:
\begin{equation}
\delta_2\Bigl(W,\hat W_{\text{nonprivate}}\Bigr)
  \leq \oracle(W) +  2 \eps_n(W) +
   O_P\paren{ \sqrt[4]{\frac{\log k}{\rho n}
       +\frac {k^2}{\rho n^2}
}
}. \label{eq:main-bound-nonprivate}
\end{equation}

It follows from the theory of graph convergence that
for all graphons $W$, we have
$\oracle(W)\to 0 $ as $k\to \infty$ and
$\eps_n(W)\to 0$ almost
surely as $n\to \infty$.
As proven in Appendix~\ref{sec:sampling-k-block}, we also have $\eps_n(W)
=O_P(\oracle(W)+\sqrt[4]{k/n})$, though this upper bound is loose in
many cases.

As a specific instantiation of these bounds, let us consider the
case that $W$ is exactly described by a $k$-block
model, in which case  $\oracle(W)=0$ and
$\eps_n(W)=O_P(\sqrt[4]{k/n})$ (see Appendix~\ref{sec:sampling-k-block}). For  $k\leq (n/\log^2 n)^{1/3}$, $\rho\geq\log(k)/k$
and constant $\eps$, our
private estimator has an asymptotic error that is dominated by the (unavoidable) error of
$\eps_n(W)=\sqrt[4]{k/n}$, showing that we do not lose anything due to privacy in
this special case.  Another special case is when $W$ is $\alpha$-H\"older continuous, in which case $\oracle(W)=O(k^{-\alpha})$ and $\eps_n(W)=O_P(n^{-\alpha/2})$;
see Remark~\ref{rem:Hoelder-Cont-W} below.

\mypar{Comparison to Previous Nonprivate Bounds}
We provide the first consistency bounds for estimation of a
nonparametric graph model subject to node differential
privacy. Along the way, for sparse graphs, we provide more general consistency results
than were previously known, regardless of privacy.
In particular, to the best of our knowledge, \emph{no prior results
give a consistent estimator for $W$ that works for sparse graphs without any additional assumptions besides
boundedness.}

When compared to results for nonprivate algorithms applied to
graphons obeying additional assumptions, our bounds are often
incomparable, and in other cases match the existing bounds.

We start by considering graphons which are themselves
step functions with a known number of steps $k$.  In the dense
case, the nonprivate algorithms of \cite{GaoLZ14}
and \cite{Chatterjee15}, as well as our nonprivate algorithm,
give an asymptotic error that is dominated by the term
$\eps_n(W)=O(\sqrt[4]{k/n})$, which is of the same order
as our private estimator as long as $k=\tilde o (n^{1/3})$.
\cite{WO13}  provided the first convergence results
for estimating graphons in the sparse regime. Assuming that $W$ is
bounded above and below (so it takes values in a range
$[\lambda_1,\lambda_2]$ where $\lambda_1>0$), they
analyze an inefficient algorithm (the MLE).
% , which
% they analyze for estimation of H\"older-continuous graphons.
%
The bounds of \cite{WO13} are incomparable to ours, though for the
case of $k$-block graphons, both their bounds and our nonprivate bound are dominated
by the term $\sqrt[4]{k/n}$ when $\rho>(\log k) / k$ and $k \leq \rho
n$. A different sequence of works shows how to consistently estimate the underlying
block model with a \emph{fixed} number of blocks $k$
in polynomial time for
%in
very sparse
graphs (as for our non-private algorithm, the only thing which is needed is
that %assuming only
$n\rho \to \infty$)
\cite{ABH14,AS15known,AS15unknown}; we are not aware of concrete
bounds on the convergence rate.

For the case of \emph{dense} $\alpha$-H\"older-continuous graphons, the results
of \cite{GaoLZ14} give an error  which is dominated
by the  term $\eps_n(W)=O_P(n^{-\alpha/2})$.  For $\alpha<1/2$,
our nonprivate bound matches this bound, while for $\alpha >1/2$ it is worse.
% H\"older: $n^{-\frac{\alpha}{\alpha+1}}$ or $(\log n)/n$ for
% $\alpha=1$.
\cite{WO13} consider the sparse case.
% provided the first convergence results
% for estimating graphons in the sparse regime. Assuming that $W$ is
% bounded above and below (so it takes values in a range
% $[\lambda_1,\lambda_2]$ where $\lambda_1>0$), they
% give an inefficient algorithm, which
% they analyze for estimation of H\"older-continuous graphons.
The rate of
their estimator is incomparable to the rate of
our estimator; further, their analysis requires a lower bound on
the edge probabilities, while ours does not.
% (by setting $k = (n\sqrt{\rho})^{1/(\alpha+1)}$).

See Appendix~\ref{sec:comparisons} for a more detailed discussion of
the previous literature.

\section{Preliminaries}

\subsection{Notation}

For a
graph $G$ on $[n]=\{1,\dots,n\}$, we use $E(G)$ and $A(G)$ to denote the edge set and the
adjacency matrix of $G$, respectively.  The edge density $\rho(G)$ is defined as the number of edges divided by $n\choose 2$. Finally the degree $d_i$ of a vertex $i$ in $G$ is the number of edges containing $i$.
We use the same notation for a weighted graph with nonnegative edge weights $\beta_{ij}$, where now $\rho(G)=\frac 2{n(n-1)}\sum_{i<j}\beta_{ij}$,
and $d_i=\sum_{j\neq i}\beta_{ij}$.
We use   $\G_n$ to denote the set of weighted graphs on $n$ vertices with
weights in $[0,1]$, and  $\G_{n,d}$ to denote the set of all graphs
in $\G_n$ that have maximal degree at most $d$.

\mypar{From Matrices to Graphons}
We define a graphon to be a
bounded, measurable function $W:[0,1]^2\to \R_+$ such that $W(x,y)=W(y,x)$ for all $x,y\in [0,1]$.  It will be convenient to embed the set of
 a symmetric $n\times n$ matrix with nonnegative entries into graphons
as follows: let $\PP_n=(I_1,\dots I_n)$ be the partition of $[0,1]$
into
adjacent intervals of lengths $1/n$.
%the intervals $I_1=[0,1/n]$, and $I_i=((i-1)/n,i/n]$ for
%$i=2,\dots,n$.
Define $\W{A}$ to be the step function which equals
$A_{ij}$ on $I_i\times I_j$.
If $A$ is the adjacency matrix of an unweighted graph $G$, we use $\W{G}$ for $\W{A}$.
%Note that with this embedding of $n\times n$ matrices into functions
%on $[0,1]^2$, we have that $\langle A,B\rangle =\langle
%\W{A},\W{B}\rangle$ and $\|A\|_p=\|\W{A}\|_p$.

\mypar{Distances}
For $p\in [1,\infty)$ we define the $L_p$ norm
of an $n\times n$ matrix $A$ by $\|A\|_p^p=\frac 1{n^2}\sum_{i,j}|A_{ij}|^p$,
  and the $L_p$ norm of a (Borel)-measurable
function $W:[0,1]^2\to \R$ by $\|f\|_p^p=\int |f(x,y)|^pdxdy$.  Associated with the $L_2$-norm is a scalar
product, defined as $\langle A,B \rangle=\frac 1{n^2}\sum_{i,j}A_{ij}B_{ij}$ for two $n\times n$ matrices $A$ and $B$, and
$\langle U,W\rangle=\int U(x,y)W(x,y) dxdy$ for two square integrable functions $U,W:[0,1]^2\to\R$.
%Finally, $\|A\|_\infty$ and $\|W\|_\infty$ denote the upper bounds $\max_{i,j\in [n]}|A_{ij}|$ and $\sup_{x,y\in [0,1]}|W(x,y)|$, respectively.
Note that with this notation,
  % the edge-density of a weighted graph with empty diagonal
the edge density and the $L_1$ norm are related by $\|G\|_1=\frac{n-1}{n}\rho(G)$.

%With these definitions, we have that $\rho(G)=\frac n{n-1}\|A\|_1$ if $A$ is the adjacency matrix of a graph $G$ (or the matrix of edge-weights of a weighted graph $G$).
Recalling \eqref{delta-def}, we define the $\delta_2$ distance between two matrices $A,B$, or between a matrix $A$ and
%an integrable function $W:[0,1]^2\to\R$
a graphon $W$
by
$\delta_2(A,B)=\delta_2(\W{A},\W{B})$ and $\delta_2(A,W)=\delta_2(\W{A},W)$.  In addition, we will also use the in general larger distances $\hat\delta_2(A,B)$ and $\hat\delta_2(A,W)$, defined by taking a minimum over matrices $A'$ which are obtained from
$A$ by a relabelling of the indices:
$\hat\delta_2(A,B)=\min_{A'}\|A'-B\|_2$ and $\hat\delta_2(A,W)=\min_{A'}\|A'-W\|_2$.

\subsection{$W$-random graphs and graph convergence}
\label{sec:W-rand}

\mypar{W-random graphs and stochastic block models}
Given a graphon $W$ we define a random $n\times n$ matrix $\Hn=\HnW$ by
choosing $n$ ``positions'' $x_1,\dots, x_n$ i.i.d. uniformly at
random from $[0,1]$ and then setting $(\Hn)_{ij}=W(x_i,x_j)$.  If
$\|W\|_\infty\leq 1$, then $\HnW$ has entries in $[0,1]$, and we can form a
random graph $G_n=G_n(W)$ on $n$-vertices by choosing an edge between two vertices $i<j$ with probability $(\Hn)_{ij}$, independently for all $i<j$.
Following \cite{LS06} we call $G_n(W)$ a $W$-random graph and $\HnW$ a $W$-weighted random graph.
We incorporate a target density $\rho_n$ (or simply $\rho$, when $n$
is clear from the context) by normalizing $W$ so that $\int W=1$ and
taking $G$ to be a sample from $G_n(\rho W)$.  In other words,
we set $\Q=\Hn(\rho W)=\rho \HnW$ and then connect $i$ to $j$ with
probability $Q_{ij}$, independently for all $i<j$.
%and $\Q$ the
%corresponding instance of $\Hn(\rho W)$.

The error %$\eps_n(W)$
from our main estimates
 measuring the distance
between $\HnW$ and $W$  is defined as
\begin{equation}
\label{eps-n}
\eps_n(W)=\hat\delta_2(\HnW,W)
\end{equation}
and goes to zero as $n\to\infty$ by the following lemma,
which follows easily from the results of \cite{BCCZ14a}.
\begin{lemma}
\label{lem:\Hn-Convergence}
Let $W$ be a graphon with $\|W\|_\infty{<\infty}$.
With probability one,
$\|\HnW\|_1 \to \|W\|_1$ and
$\eps_n(W)\to 0.$
\end{lemma}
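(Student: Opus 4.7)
The plan is to handle the two claims in sequence, using the convergence of $\|\HnW\|_1$ as the main input to the convergence of $\eps_n(W)$.

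For the first claim, note that $\|\HnW\|_1 = \frac{1}{n^2}\sum_{i,j}W(x_i,x_j)$ is a V-statistic of order two with bounded, nonnegative kernel $W$. Peeling off the diagonal contributes $O(\|W\|_\infty/n)$, and the remaining off-diagonal piece is a standard U-statistic with mean $\frac{n-1}{n}\|W\|_1$. Hoeffding's inequality for U-statistics gives $\pr[|U_n-\E U_n|>t]\leq 2\exp(-cnt^2)$ with a constant $c>0$ depending only on $\|W\|_\infty$, so Borel--Cantelli applied over a countable dense set of $t$'s yields $\|\HnW\|_1 \to \|W\|_1$ almost surely.

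For the second claim I would approximate $W$ in $L^2$ by a uniformly continuous graphon $\tilde W$ with $\|W-\tilde W\|_2<\epsilon$ (possible since continuous functions are dense in $L^2([0,1]^2)$) and then apply the triangle inequality
\[
\hat\delta_2(\HnW,W)
\leq \bigl\|\HnW-\Hn(\tilde W)\bigr\|_2
+ \hat\delta_2(\Hn(\tilde W),\tilde W)
+\|\tilde W-W\|_2 .
\]
The first term equals $\sqrt{\|\Hn((W-\tilde W)^2)\|_1}$, and by applying the first claim to the bounded kernel $(W-\tilde W)^2$ it converges almost surely to $\|W-\tilde W\|_2<\epsilon$. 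For the middle term, I would sort the sample points into order statistics $y_1<\cdots<y_n$ and use the permutation $\sigma$ realizing this ordering: the relabeled step function $\W{\Hn(\tilde W)^\sigma}$ takes value $\tilde W(y_i,y_j)$ on $I_i\times I_j$, and by Glivenko--Cantelli $\sup_i|y_i-i/n|\to 0$ almost surely, so uniform continuity of $\tilde W$ drives this term to $0$ almost surely. Combining these bounds and sending $\epsilon\to 0$ along a countable sequence gives $\eps_n(W)\to 0$ almost surely.

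The main obstacle is that for an arbitrary measurable $W$ there is no canonical labeling of the sample points that makes $\HnW$ converge pointwise to $W$, so a smoothing step is essential. The density-then-triangle-inequality reduction above cleanly isolates the smoothing (handled for continuous $\tilde W$ via the empirical-CDF estimate) from the $L^2$ approximation (handled uniformly in $n$ via the first claim). Alternatively, one may invoke the $L^p$-sampling lemma from \cite{BCCZ14a} as a black box --- it already establishes $\hat\delta_2(\HnW,W)\to 0$ almost surely for bounded graphons, which is exactly the conclusion sought.
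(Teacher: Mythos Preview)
Your argument is correct. The paper itself offers no proof beyond the sentence ``follows easily from the results of \cite{BCCZ14a}'', i.e.\ it treats both statements as direct consequences of the $L^p$ sampling lemma proved there. Your proposal instead builds an essentially self-contained argument: the first claim via Hoeffding's inequality for U-statistics and Borel--Cantelli, the second via a density/triangle-inequality reduction to uniformly continuous $\tilde W$, where Glivenko--Cantelli controls the order statistics and hence $\hat\delta_2(\Hn(\tilde W),\tilde W)$. Both routes are sound. The citation approach is shorter and also yields the result for unbounded $W\in L^p$, which is the generality in which \cite{BCCZ14a} works; your direct argument is more transparent and uses only classical tools, at the cost of relying on boundedness (you need $(W-\tilde W)^2$ bounded to re-apply the first claim, and you implicitly need to truncate and symmetrize $\tilde W$ so that it stays a bounded symmetric kernel---worth stating explicitly). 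Since you also note the black-box alternative at the end, your write-up in fact subsumes the paper's.
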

%\begin{proof}
%The $L_1$ version of the lemma can be found in \cite{BCCZ14a}.  Since $W$ and $\W{\Hn}$ are both bounded
%by $\|W\|_\infty$ in
%the $L_\infty$ norm, convergence in $L_1$ implies convergence in $L_2$.
%\end{proof}
Stochastic block models are specific examples of $W$-random graph in
which $W$ is constant on sets of the form $I_i\times I_j$, where
$(I_1,\dots, I_k)$ is a partition of $[0,1]$ into intervals of
possibly different lengths.

{
\mypar{Approximation by block models}
In the opposite direction, we can map a function $W$ to a matrix $B$ by the following  procedure.  Starting from an
arbitrary partition $\PP=(Y_1,\dots,Y_k)$ of $[0,1]$ into sets of equal Lebesgues measure, define $W/\PP$ to be the
matrix obtained by averaging over sets of the form $Y_i\times Y_j$,
\[
(W/\PP)_{ij}=\frac 1{\lambda(Y_i)\lambda(Y_j)}
\Bigl(\int_{Y_i\times Y_j}W(x,y)dxdy\Bigr)
\]
where $\lambda(\cdot)$ denotes the Lebesgue measure. Finally, we will  use $W_\PP$ to denote the step function
\[
W_\PP=\sum_{i,j\in [k]}(W/\PP)_{ij}1_{Y_i}\times 1_{Y_j}.
\]
Using the above averaging procedure, }
it is easy to see that any graphon $W$ can
be well approximated by a block model.  Indeed, let
\begin{equation}\label{eps-k}
\oracle(W)=\min_{B}\|W-\W{B}\|_2
\end{equation}
where the minimum goes over all $k\times k$ matrices $B$.
{Given that we are minimizing the $L_2$-distance, the minimimizer
can easily be calculated, and is equal to $W_{\PP_k}$,
where $\PP_k$ is a partition of $[0,1]$ into adjacent intervals of
lengths $1/k$.  It then follows from the Lebesgue density theorem
(see, e.g., \cite{BCCZ14a} for details) that}
$\oracle(W)=\|W-W_{\PP_k}\|_2\to 0$ as $k\to\infty$.

We will take the above approximation as a benchmark for our approach, and consider it the
error an ``oracle'' could obtain (hence the superscript $O$).

  % If we want to generate a graph $G_n$ whose density depends on $n$,
  % we first normalize $W$ so that $\int W=1$ (we refer to these as
  % normalized graphons) and then scale it by an $n$-dependent factor
  % $\rho_n$, i.e., we consider $\rho_n W$-random graphs, sometimes also
  % referred to as $W$-random graphs with target density $\rho_n$. Note
  % that by construction, the expected density of this graphs is equal
  % to $\rho$. In most cases, we will not make the dependence on $n$
  % explicit, and write $\rho$ instead of $\rho_n$.  In a similar way,
  % we often suppress the index $n$ on the graph $G_n$, stating that $G$
  % is a $W$-random graph with target density $\rho$ on $n$ vertices,
  % while keeping the notation $G=G_n(\rho W)$.  In a similar way,
  % through most of this paper, we simply write $\Q$ for the random,
  % weighted graph $\Hn(\rho W)$.

\mypar{Convergence}
\label{sec:convergence}
{The theory of graph convergence was first
developed
 \cite{BCLSV06,BCLSV08,BCLSV12}, where it was formulated for dense graphs,
and then generalized to sparse graphs in \cite{BR,BCCZ14a,BCCZ14b}.
One of the notions of graph convergence considered in these papers is
the notion of convergence in metric.  The metric in question
is  similar to the metric $\delta_2$, but instead of the
  $L_2$-norm, one starts from the cut-norm $\|\cdot\|_\square$ first
  defined in \cite{FK99},
  \[
  \|W\|_\square=\sup_{S,T\subset [0,1]} \Bigl| \int_{S\times T}W
  \Bigr|,
  \]
  where the supremum goes over all measurable sets $S,T\subset [0,1]$.
  The cut-distance $\delta_\square$ between two integrable functions
  $U,W;[0,1]^2\to\R$ is then defined as
  \[
  \delta_\square(U,W)=\inf_{\phi}\|U^\phi-W\|_\square,
  \]
  where the inf goes over all measure preserving bijections on
  $[0,1]$.
 We will also need the following variations:  a distance  $\hat\delta_\square(G,G')$
  between two graphs on the same node set, as well as a distance
  $\hat \delta_\square(G,W)$ between a graph $G$ and  a graphon $W$,
  defined as
 $
  \hat d_\square(G,G')=\min_{G''}\|\W{G''}-\W{G}\|_\square
$
and
$
  \hat d_\square(G,W)=\min_{G''}\|\W{G''}-W\|_\square,
$ respectively,
  where the minimum goes over graphs $G''$ isomorphic to $G$.

  Given these notions, we say a (random or deterministic) sequence
  $G_n$ of graphs converges to a graphon $W$ in the cut metric if, as $n\to\infty$,
  \[
  \delta_\square\paren{\frac 1{\rho(G_n)}\W{G_n},W}\to 0\, .
  \]

With this notion of convergence, for any graphon $W$ with
  $\int W=1$, a sequence of $W$-random graphs
$G_n$ with target density $\rho_n$ converges to the generating graphon
$W$. This} was shown for
bounded $W$ and $n$-independent target densities $\rho$ with $\rho\|W\|_\infty\leq 1$
in
  \cite{BCLSV08}, but
  the statement is much more general, and in particular holds for
  arbitrary target densities $\rho_n$ as long as $n\rho_n\to\infty$
  and $\limsup\rho_n\|W\|_\infty\leq 1$ \cite{BCCZ14a}.

%%
%{\blue In fact, the results of \cite{BCCZ14a} imply
%convergence in the slightly stronger metric $\hat\delta_\square$,
%where the distance $\hat\delta_\square(\Q,W)$ of a weighted graph $\Q$ and a graphon $W$
%is defined
%$\hat\delta(\Q,W)=\min_{\Q'}\|\W{\Hn'}-W\|_\square$, with the minimum running over all graphs $\Q'$ that are isomorphic to $\Q$ (rather than all measure preserving bijections on $[0,1]$.)
%}

\mypar{Estimation of Multi-Way Cuts}
%Given a graph $G_n$, one quantity one might be interested in is the set of all
%multi-way cuts.  Formally, we can define these in terms of the sets
%$S_q(G)=\{G/\PP\}$, where $\PP$ runs over all partitions of $[n]$
%into $q$ sets $V_1,\dots,V_n$, and $G/\PP$ is the weighted
%graph $G/\PP$ with weights $\alpha_i=|V_i|/|V(G)|$ and $\beta_{ij}=\frac 1{2|E(G)|}\sum_{x\in V_i,y\in V_j}A(G)_{xy}$.
Using the results of \cite{BCCZ14b}, the convergence of $G_n$ in the cut-metric $\delta_\square$
implies many interesting results for estimating various  quantities
defined on the graph $G_n$.  Indeed, a consistent approximation $\hat W$ to $W$ in
 the metric $\delta_2$ is clearly consistent in the weaker metric $\delta_\square$.
But this distance controls various quantities of interest to
computer scientists, e.g., the size of all multi-way cuts, implying that
a consistent estimator for $W$ also gives consistent estimators for
all multi-way cuts.
%Releasing a differentially private estimator $\hat W$ to $W$ therefore gives differentially
%private access to these quantities.

To formalize this, we need some notation.  Given a weighted
graph $G$ on $[n]$ with node-weights one and edge-weights $\beta_{xy}(G)$, and
given a
partition $\mathcal P=(V_1,\dots,V_q)$ of $[n]$ into $q$ groups, let
 $G/\PP$  be the weighted graph
  with weights
  \[
  \alpha_i(G/\PP)=|V_i|/|V(G)|
  \quad\text{and}\quad
 \beta_{ij}(G/\PP)=\frac 1{n^2\|G\|_1}\sum_{x\in V_i,y\in V_j}\beta_{xy}(G).
  \]
We call $G/\PP$ a $q$-quotient or $q$-way cut of $G$,
  and denote the set of all $q$-way cuts by  $S_q(G)$:
  \[
  S_q(G)=\{G/\PP : \PP\ \text{is a partition of $[n]$ into $q$
    sets }\}\, .
  \]
%  where $\PP$ runs over all partitions of $[n]$ into $q$ sets.

  We also  consider the set of \emph{fractional
  $q$-way cuts}, $\hat S_q(G)=\{G/\rho\}$, defined in terms of \emph{fractional $q$-partitions} $\rho$.
  A fractional $q$-partition
  of $V(G)$ is a map $\rho:V(G)\to \Delta_q: x\mapsto\rho(x)$, where
  $\Delta_q$ is the simplex
  $\Delta_q=\{\rho=(\rho_i)\in [0,1]^q\colon \sum_i\rho_i=1\}$, and the
  corresponding fractional quotient $G/\rho$ is the weighted graph with weights
  $\alpha_i(G/\rho)=\frac 1{|V(G)|}\sum_x\rho_i(x)$
  and $\beta_{ij}(G/\rho)=\frac 1{n^2\|G\|_1}\sum_{x\in V_i,y\in V_j}\beta_{xy}(G)\rho_i(x)\rho_j(x)$.

  The set of fractional $q$-partitions of a graphon $W$, $\hat S_q(W)$, is defined similarly:  $
  \hat S_q(W)=\{ W/\rho\mid \rho:[0,1]\to\Delta_q\}
  $,
  with a fractional partition now a measurable function $\rho:[0,1]\to \Delta_q$, and $W/\rho$
  given in terms of the weights
  \[
  \alpha_i(W/\rho)=\int\rho_i(x)dx
  \quad\text{and}\quad
  \beta_{ij}(W/\rho)=\frac1{\|W\|_1}\int\rho_i(x)\rho_j(y) W(x,y).
  \]
  To measure the distance between the various sets of $q$-way cuts, we use the Hausdorff distance
 between  sets  $S,S'\subset\R^{q+q^2}$,
 \[d_\infty^{\text{Haus}}(S,S')=
 \max\{\sup_{H\in S}\inf_{H'\in S'}\|H-H'\|_\infty,
 \;\sup_{H'\in S'}\inf_{H\in S}\|H-H'\|_\infty\},
 \]
 where $\|\cdot\|_\infty$ is the $L_\infty$-norm
$\|H-H'\|_\infty=\max\{\max_i|\alpha_i(H)-\alpha_i(H')|
,\max_{ij}|\beta_{ij}(H)-\beta_{ij}(H')|\}$.

It was shown in \cite{BCCZ14a} that if
$G_n$ converges to  $W$ in the cut-metric, then
$d_\infty^{\text{Haus}}(S_q(G_n),\hat S_q(W))\to 0$. In particular, a consistent estimator
$\hat W$
for the generating graphon $W$ of a $W$-random graph $G_n$ leads to a
consistent estimator $\hat S_q(\hat W)$ for the
cuts $S_q(G_n)$, in the sense that
$d_\infty^{\text{Haus}}(S_q(G_n),\hat S_q(\hat W))\to 0$.  With a little
more work, we can give quantitative bounds; see Theorem~\ref{thm:cuts} below.

%see see Appendix~\ref{App:convergence} for details.

%Note that we cannot expect $\frac 1{\rho(G_n)}\W{G_n}\to W$ in the stronger metric $\delta_2$.  Indeed, already  the simplest example of an Erdos-Renyi random graph with edge density $1/2$ shows why we can't expect this.  Since the adjacency matrix of $G_n$ has entries $0$ and $1$, and $G_n$ has density approaching $1/2$, the function
%$W_n=\frac 1{\rho(G_n)}\W{G_n}$ asymptotically has entries $0$ and $2$, while the generating $W$ is just the constant function $W\equiv 1/2$, which means
%that $\delta_2(W_n,W)$ stays bounded away from $0$.  But if instead of $G_n$ we consider the
%weighted  graphs  $\Hn(\rho_n W)$, the situation is different, as can
%be seen from the following lemma.
%
%To state it,

\subsection{Differential Privacy for Graphs}
\label{sec:diff-P}

The goal of this paper is the development of a differentially private
algorithm for graphon estimation. The privacy guarantees are
formulated for worst-case inputs --- we do not assume that $G$ is
generated from a graphon when analyzing privacy. This ensures that
the guarantee remains meaningful no matter what an analyst knows ahead
of time about $G$.

In this paper, we consider the notion of node privacy.  We call
two
 graphs $G$ and $G'$ {\em node neighbors} if one can be obtained from the other by removing one node and its adjacent edges.

\begin{defn}[$\eps$-node-privacy]\label{def:dif-privacy}
A randomized algorithm $\A$ is \emph{$\eps$-node-private} if for all events $S$ in the output space of $\A$, and  node neighbors $G,G'$,
$$\Pr[\A(G) \in S] \leq \exp(\eps)\times \Pr [\A(G') \in S]\,.$$
\end{defn}
We also need the notion of the \emph{node-sensitivity} of a function $f:\G_n\to \R$,
defined as maximum  $\max_{G,G'}|f(G)-f(g')|$, where the maximum goes over node-neighbors.
{This constant is often called the Lipshitz constant of $f$}
%For simplicity of presentation, we assume that $n=v(G)$, the number of nodes of the input graph $G$, is publicly known. This assumption is justified since,  one can get an accurate estimate of $v(G)$ by running a node-private algorithm, see Appendix~\ref{app:private} for details.

{Finally, we need a lemma concerning the extension of
 functions $f:\G_{n,d}\to\R$ to functions
      $\hat f :\G_n\to \R$.
 We say a function on adjacency matrices is \emph{nondecreasing} if
  adding an edge to the adjacency matrix does not increase the value
  of the function.

    \begin{lemma}[\cite{McShane34,KNRS13}]
    \label{lem:Lip-extension}
      For every function $f:\G_{n,d}\to\R$, there is an extension
      $\hat f :\G_n\to \R$ of $f$ with the same node-sensitivity as $f$. If $f$ is a nondecreasing linear function of the adjacency
      matrix, then we can select $\hat f$ to be nondecreasing and
      computable in polynomial-time and so that $\hat f(G)\leq f(G)$
      for all graphs $G\in \G_n$.
    \end{lemma}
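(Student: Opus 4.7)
The first statement is an instance of the classical McShane--Whitney Lipschitz extension theorem, which I would apply directly. Endow $\G_n$ with the node-edit distance $d_{\mathrm{node}}(G,G')$, defined as the minimum number of node-neighbor transitions needed to transform $G$ into $G'$. Letting $L$ denote the node-sensitivity of $f$ on $\G_{n,d}$, set
\[
\hat f(G) \;=\; \inf_{H \in \G_{n,d}} \bigl[\, f(H) + L \cdot d_{\mathrm{node}}(G,H)\,\bigr].
\]
The extension property $\hat f|_{\G_{n,d}} = f$ follows from the Lipschitz inequality for $f$ (taking $H=G$ gives the upper bound, and Lipschitzness of $f$ on $\G_{n,d}$ gives the reverse inequality). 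The node-sensitivity bound $|\hat f(G)-\hat f(G')|\le L$ for neighbors $G,G'\in\G_n$ is then a one-line triangle inequality in $d_{\mathrm{node}}$.

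For the second statement, I would construct $\hat f$ via a flow-type linear program, following the ideas of Kasiviswanathan--Nissim--Raskhodnikova--Smith. Since $f$ is linear and nondecreasing in the adjacency matrix, I can write $f(G) = \sum_{i<j} c_{ij}\, A_{ij}(G)$ with $c_{ij} \geq 0$, and define
\[
\hat f(G) \;=\; \max\Bigl\{\, \sum_{i<j} c_{ij}\, x_{ij} \;:\; 0 \le x_{ij} \le A_{ij}(G),\;\; \sum_{j \neq i} x_{ij} \le d \;\text{ for all } i \,\Bigr\},
\]
which is polynomial-time computable. Monotonicity is immediate since enlarging $A(G)$ only enlarges the feasible region; the bound $\hat f\le f$ follows because every feasible $x$ satisfies $x_{ij}\le A_{ij}$ while $c_{ij}\ge 0$; and for $G\in\G_{n,d}$, the choice $x=A(G)$ is feasible and maximizes the objective, yielding $\hat f(G)=f(G)$.

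The main obstacle, and the only step that requires real work, is verifying that $\hat f$ inherits the same node-sensitivity $L$. Given node-neighbors $G,G'$ differing by a single vertex $v$, and an optimal solution $x^*$ for $G$, I would form a candidate $x'$ for $G'$ by zeroing out all entries $x^*_{vj}$ and $x^*_{iv}$; feasibility of $x'$ for $G'$ is immediate, so
\[
\hat f(G) - \hat f(G') \;\le\; \sum_{j} c_{vj}\, x^*_{vj}.
\]
The key observation is that the right-hand side, subject to $x^*_{vj}\in[0,1]$ and $\sum_j x^*_{vj}\le d$, is bounded by the sum of the $d$ largest coefficients $c_{vj}$; and this latter quantity is exactly the maximum change $f$ can undergo when inserting or removing vertex $v$ inside $\G_{n,d}$ (achieved by the star at $v$ of maximum cost), so it is at most $L$. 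The reverse inequality follows from monotonicity, since deleting $v$ can only decrease $\hat f$. Putting these two bounds together gives $|\hat f(G)-\hat f(G')|\le L$, completing the argument.
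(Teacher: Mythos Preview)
Your proposal is correct and follows essentially the same route as the paper. The paper's own proof is extremely terse: for the first part it simply invokes the McShane/Kirszbraun extension theorem (exactly your formula), and for the second part it writes down the identical linear program you give and then defers the entire analysis of its properties to \cite{KNRS13}. You go further by actually sketching the node-sensitivity bound for the LP (zeroing out the optimal solution at the affected vertex and bounding the loss by the sum of the $d$ largest coefficients, which is realized by a star in $\G_{n,d}$); this argument is correct and is precisely what one finds in \cite{KNRS13}.
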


The lemma is proved in Appendix~\ref{app:private}.
}

\section{Differentially Private Graphon Estimation}
\label{sec:estimation}

\subsection{Least-squares Estimation}

Given a graph as input generated by an unknown graphon $W$, our goal
is to recover a block-model approximation to $W$. The basic nonprivate algorithm
we emulate is least squares estimation, which outputs the $k\times k$ matrix $B$
which is closest  to the input adjacency matrix $A$ in the distance
\[
\hat\delta_2(B,A)=
\min_{\pi}\|B_\pi-A\|_2,
\]
where the minimum runs over all
equipartitions $\pi$ of $[n]$ into $k$ classes, i.e., over all
maps $\pi:[n]\to [k]$ such that all classes have size as close to $n/k$ as possible,
i.e., such that $||\pi^{-1}({i})|-n/k|< 1$ for all $i$, and $B_\pi$ is the
$n\times n$ block-matrix with entries $(B_\pi)_{xy}=B_{\pi(x)\pi(y)}$.
If
$A$ is the adjacency matrix of a graph $G$, we write
$\hat\delta_2(B,G)$ instead of $\hat\delta_2(B,A)$.
In the above notation, the basic algorithm we would want to emulate is then the algorithm which outputs the
least square fit
$
\hat B=\argmin_B \hat\delta_2(B,G)
$,
where the $\argmin$ runs over all symmetric $k\times k$ matrices $B$.

\subsection{Towards a Private Algorithm}
Our main idea to turn the least square algorithm into a private algorithm
is to use the so-called exponential mechanism of McSherry and Talwar \cite{MT07}.
Applied naively, we would therefore want to output a random $k\times k$  matrix $B$ according to the probability distribution
\[
\Pr(\hat B = B)\propto\exp\paren{- C \hat\delta_2^2(B,A)},
\]
with $C$ chosen small enough to guarantee differential privacy.
It is a standard fact from the theory of differential privacy, that
$C$ should be at most $\eps$ over twice the node-sensitivity of the ``score function'', here $\delta_2^2(B,\cdot)$.
But this value of $C$ turns out to be too small to produce an output
that is a good approximation to the least square estimator.
Indeed,
for a given matrix $B$ and equipartition $\pi$, the distance
$\|G-B_\pi\|_2^2$ can change by as much as $\frac 1 n$ when $G$ is replaced by
a node-neighbor,
regardless of
the magnitude of the entries of $B$.  To obtain differential privacy, we
then would need to choose $C\geq  n\eps/2$, which turns out to not
produce useful results when
the input graph $G$ is  sparse, since
small values of $C$ will lead to large errors relative to the least square estimator.

To address this, we first note that we can work with an equivalent score
that is much less sensitive. Given $B$ and $\pi$, we subtract off the
squared norm of $G$ to obtain the following:
\begin{eqnarray}
\label{score}
score(B,\pi; G) &=&\|G\|^2_2 -\|G-B_\pi\|_2^2 = 2\ip{G}{B_\pi} -
\|B_\pi\|^2 \text{, and}\\
score(B;G)&=& \max_\pi score(B,\pi;G),
\end{eqnarray}
where the $\max$ ranges over equipartitions $\pi:[n]\to[k]$.  For a fixed input graph $G$, maximizing the score
is the same as minimizing the distance, i.e.
$
\argmin_B \hat\delta_2(B,G) = \argmax_B score(B;G).
$
The sensitivity of the new score is then bounded  by $\frac 2 {n^2}
\cdot \|B\|_\infty$ times the maximum degree in $G$
(since $G$ only affects the score via the
inner product $ \ip{G}{B_\pi}$).
But this is still problematic since, a priori,  we have no control over
either the size of
$\|B\|_\infty$ or the maximal degree of $G$.

To keep the sensitivity low, we  make two modifications: first, we  only optimize over matrices $B$ whose entries are of order $\rho_n$ (in the end, we expect that a good estimator will have entries which are not much larger than  $\|\rho_n W\|_\infty$, which is of order $\rho_n$), and second we  restrict ourselves to graphs $G$ whose maximum degree is not much larger than one would expect
for graphs generated from a bounded graphon, namely a constant times
the average degree.  While the first restriction is something we can
just implement in our algorithm, unfortunately the second is something
we have no control over: We need to choose $C$ small enough to
guarantee privacy for all input graphs, and we have set out to
guarantee privacy in the worst case, which includes graphs with
maximal degree $n-1$.  Here, we employ an idea from \citep{BBDS13,KNRS13}: we first consider the restriction of $score(B,\pi;\cdot)$ to
$\G_{n,d_n}$ where $d_n$ will be chosen to be of the order of the average degree of $G$, and then extend it
back to all graphs while keeping the sensitivity low.
%using a new variation of the
%Lipschitz extension techniques of \cite{KNRS13}.

\subsection{Private Estimation Algorithm}

After these motivations, we are now ready to define our algorithm.
 It takes as input
the privacy parameter $\eps$, the graph $G$, a number $k$ of blocks, and a constant $\lambda\geq 1$
that will have to be chosen large enough to guarantee consistency of the algorithm.
It outputs a matrix $B$ from the set of matrices
$$ \textstyle
\B_{\mu}= \{B\in [0,\mu]^{k\times k}: \text{all entries
          }B_{i,j}\text{ are multiples of }\frac 1 n\}.
$$
{Inside our algorithm, we use an $\eps/2$-private algorithm to get
an estimate $\hat \rho$ for the edge density of $G$.  We do so
by setting $\hrho=\rho(G)+\Lap(4/n\eps)$, where
$\Lap(\kappa)$  is a Laplace random variable with density $h(z)=\frac{1}{2\kappa}e^{-|z|/\kappa}$.
The existence of the Lipschitz extension used in the algorithm
follows from Lemma~\ref{lem:Lip-extension}.
}

\begin{algorithm}\caption{Private Estimation Algorithm}
\label{alg:main-algo}
\KwIn{$\eps>0$, $\lambda\geq 1$, an integer $k$ and graph $G$ on $n$
  vertices.}
\KwOut{$k\times k$ block graphon (represented as a $k\times k$ matrix)
estimating $\rho W$}

\label{step:rho-approx}Compute  an $(\eps/2)$-node-private density approximation $\hrho=\rho(G)+\Lap(4/n\eps)$ \;

$d=\lambda \hrho n$ (the target maximum degree) \;
$\mu = \lambda \hrho$ (the target
  $L_\infty$ norm for the matrix $B$) \;

For each $B$ and $\pi$, let $\hscore(B,\pi;\cdot)$ denote a
          nondecreasing Lipschitz extension of $score(B,\pi;\cdot)$ from
          $\G_{n,d}$ to $\G_n$ such that for all matrices $A$,
          $\hscore(B,\pi;A)\leq score(B,\pi;A)$, and define
          \[\hscore(B;A) = \max_{\pi} \hscore(B,\pi;A)\]

\Return $\hat B$, sampled from the distribution
\[
\Pr(\hat B = B)\propto\exp\paren{\frac \eps {4\Delta} \hscore (B;A)},
\]
where $B$ ranges over matrices in $\B_{\mu}$
and
$\displaystyle \Delta=\frac{4d \mu}{n^2}=\frac{4\lambda^2\hrho^2}{n}$ \;
\end{algorithm}

\begin{lemma}\label{lem:privacy-main}
Algorithm~\ref{alg:main-algo} is $\eps$-node private.
\end{lemma}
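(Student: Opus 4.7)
The plan is to decompose Algorithm~\ref{alg:main-algo} into its two sources of randomness---the Laplace perturbation producing $\hrho$ and the exponential-mechanism sampling of $\hat B$---argue each is $\eps/2$-node-private, and then combine them by adaptive composition.

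First I would handle the density-estimation step. Removing a single vertex from an $n$-vertex graph can change at most $n-1$ edges, so the edge density $\rho(G)=|E(G)|/\binom{n}{2}$ has node-sensitivity at most $2/n$. The standard Laplace calibration with scale $(2/n)/(\eps/2)=4/(n\eps)$ therefore makes $\hrho$ an $\eps/2$-node-private release, matching Step~\ref{step:rho-approx} exactly.

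Next I would argue that, conditional on any fixed value of $\hrho$ (which pins down $d$, $\mu$, and the normaliser $\Delta=4d\mu/n^2$), the exponential-mechanism sampler is $\eps/2$-node-private. The key quantity to control is the node-sensitivity of $\hscore(B;\cdot)\colon\G_n\to\R$. On the restricted domain $\G_{n,d}$, the identity $score(B,\pi;G)=2\ip{G}{B_\pi}-\|B_\pi\|_2^2$ makes the score a linear function of the adjacency matrix of $G$. Two node-neighbors in $\G_{n,d}$ differ only in the row and column of the added/removed vertex, which has degree at most $d$, so at most $2d$ entries of the $n\times n$ adjacency matrix change, each weighted by at most $\mu$ in $B_\pi$. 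Hence $|score(B,\pi;G)-score(B,\pi;G')|\leq\frac{2}{n^2}\cdot 2d\cdot\mu=\Delta$. Lemma~\ref{lem:Lip-extension} transfers this sensitivity bound to $\hscore(B,\pi;\cdot)$ on all of $\G_n$, and a pointwise maximum over equipartitions preserves the Lipschitz constant, so $\hscore(B;\cdot)$ itself has node-sensitivity at most $\Delta$ on $\G_n$. The standard exponential-mechanism guarantee of~\cite{MT07}, invoked with sensitivity $\Delta$ and scale factor $\eps/(4\Delta)=(\eps/2)/(2\Delta)$, then yields $\eps/2$-node-privacy for the sampler.

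Finally, to combine the two stages I would write out the adaptive-composition calculation directly: for any pair of node-neighbors $G,G'$ and any measurable set $S$ of output matrices,
\[
\Pr[\hat B\in S\mid G] = \int \Pr[\hrho\in d\tilde\rho\mid G]\,\Pr[\hat B\in S\mid G,\hrho=\tilde\rho],
\]
and the two $e^{\eps/2}$ bounds established above can be applied pointwise in $\tilde\rho$ to obtain $\Pr[\hat B\in S\mid G]\leq e^\eps \Pr[\hat B\in S\mid G']$. The one nonroutine point---which I expect to be the subtlest part to state carefully---is that the exponential-mechanism parameters $d,\mu,\Delta$ themselves depend on the privately released quantity $\hrho$, so the analysis of Step 2 must be done for every fixed realisation of $\hrho$ rather than marginally; once the conditional $\eps/2$-guarantee is in place, adaptive composition absorbs this dependence and the claim follows.
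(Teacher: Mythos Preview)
Your proposal is correct and follows essentially the same approach as the paper: bound the node-sensitivity of $score(B,\pi;\cdot)$ on $\G_{n,d}$ by $\Delta=4d\mu/n^2$, pass to $\hscore$ via the Lipschitz extension, invoke the exponential mechanism for $\eps/2$-privacy, and combine with the $\eps/2$-private Laplace step. Your treatment of the adaptive dependence of $d,\mu,\Delta$ on $\hrho$ is more explicit than the paper's (which simply cites the composition lemma), but the substance is identical.
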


  \begin{proof}
    By Lemma~\ref{lem:DMNS} from Appendix \ref{app:private},
    the estimate $\hat \rho$ is
    $\eps/2$-private, so we want to prove that the exponential
    mechanism itself is $\eps/2$-private as well.  In view of
    Lemma~\ref{lem:exp-mech} from Appendix \ref{app:private},
    all we need to show is that the the
    vertex sensitivity of $\hscore(B;\cdot)$ is at most $\Delta$.  To
    this end, we first bound the vertex sensitivity of
    % $\hscore$.  by $\Delta =
    % \frac{4d\mu}{n^2}=\frac{4\lambda^2\hrho^2}{n}$.
%
    % To do so, we analyze the sensitivity of
    the original score when restricted to graphs with degree $d$. Let
    $G,G'\in\G_{n,d}$ be node neighbors.  From \eqref{score}, we see
    that
    \[{score}(B,\pi;G)-{score}(B,\pi;G') =\frac 2{n^2}\sum_{x,y\in
      [n]} (A_{xy}-A'_{xy})B_{\pi(x)\pi(y)}\, ,
    \]
    where $A,A'$ are the adjacency matrices of $G$ and $G'$. Since $A$
    and $A'$ differ in at most $2d$ entries, the score differs by at
    most $4d \|B_\pi\|_\infty/n^2$. This is at most $\Delta$, since
    $B\in \B_\mu$.  Since $\hscore$ is a Lipschitz extension of
    $score$, the vertex sensitivity of $\hscore$ (over \emph{all}
    neighboring graphs) is at most $\Delta$, as required.
  \end{proof}

\begin{theorem}[Performance of the Private Algorithm]\label{thm:final}
  Let $W:[0,1]^2\to [0,\Lambda]$ be a normalized graphon, let $0<\rho\Lambda\leq 1$, let $G=G_n(\rho W)$,
  $\lambda\geq 1$, and $k$ be an integer.  Assume that
  $\rho n\geq 6\log n$ and
  $8\Lambda\leq \lambda \leq \sqrt n$,
  ${2\leq k\leq\min\{n\sqrt{\frac\rho2} ,e^{\frac{\rho n}2}\} }$.
  Then the Algorithm~\ref{alg:main-algo} outputs an
  approximation $(\hat\rho,\hat B)$ such that
\[
\begin{aligned}
\delta_2\Bigl(W,\frac 1{\hat\rho}\W{\hat B}\Bigr)
  &\leq \oracle(W) +
 2\eps_n(W)
 +  O_P\paren{\sqrt[4]{\frac{\lambda^2\log k}{\rho n}
% +\frac
%      {\lambda^2k^2}{\rho n^2}
% REDUNDANT TERM
}
 +\lambda\sqrt{\frac{k^2\log n}{n\eps}}+\frac {{ \lambda}}{n\rho\eps}}.
  \end{aligned}
\]
\end{theorem}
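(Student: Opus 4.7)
The plan is to instantiate the standard utility analysis of the exponential mechanism \cite{MT07} for the score function $\hscore$ of Algorithm~\ref{alg:main-algo}, and then convert the resulting score gap into an $L_2$ estimate on $\delta_2(\W{\hat B}/\hat \rho, W)$ via the expansion $\|G-B_\pi\|_2^2 = \|G\|_2^2 - score(B,\pi;G)$, using the oracle block-model $W_{\PP_k}$ as reference.

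I would first collect high-probability events. By the Laplace tail, $|\hrho - \rho(G)| = O_P(1/(n\eps))$; by Lemma~\ref{lem:\Hn-Convergence} and Chernoff for the independent Bernoulli entries of $A(G)$ given $\Q$, we have $|\rho(G)-\rho|=o_P(\rho)$. Under $\rho n\ge 6\log n$ and $\lambda\ge 8\Lambda$, a Chernoff bound plus union bound over vertices gives that $G\in \G_{n,d}$ with $d=\lambda\hrho n$ whp. On this event the Lipschitz extension is untouched, i.e.\ $\hscore(B,\pi;G)=score(B,\pi;G)$ for every $(B,\pi)$. Second, I identify a ``good candidate'' $B^\star\in\B_\mu$ as the entry-wise rounding of $\hrho\cdot(W/\PP_k)$ to multiples of $1/n$; the assumption $\lambda\ge 8\Lambda$ keeps $B^\star$ in $[0,\mu]^{k\times k}$, and $\|\W{B^\star}-\hrho W_{\PP_k}\|_2=O(\lambda\hrho/n)$. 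Paired with the natural equipartition $\pi^\star$ obtained by sorting the latent labels $x_1,\dots,x_n$ and assigning consecutive blocks of $n/k$ vertices, one has $\|\W{B^\star_{\pi^\star}}-\W{\Q}\|_2 \lesssim \hrho(\oracle(W)+\eps_n(W))+O(\lambda\hrho/n)$ after absorbing a small Binomial-concentration slack from using the sorted partition.

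Next, I apply the exponential-mechanism utility bound. Since $|\B_\mu|\le (\lambda\hrho n+1)^{k^2}$ and the sensitivity is $\Delta=4\lambda^2\hrho^2/n$, with probability $1-1/n$ one has
\begin{equation*}
\hscore(\hat B;G) \;\ge\; \hscore(B^\star;G) - \frac{8\Delta}{\eps}\bigl(\log|\B_\mu| + \log n\bigr) = \hscore(B^\star;G) - S,
\end{equation*}
with $S=O\!\left(\lambda^2\hrho^2 k^2\log(n\lambda\hrho)/(n\eps)\right)$. Using the expansion of $score$ above, this yields $\hat\delta_2(\hat B,G)^2\le \hat\delta_2(B^\star,G)^2 + S$, so $\hat\delta_2(\hat B,G)\le \hat\delta_2(B^\star,G)+\sqrt{S}$. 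Dividing by $\hrho$, the $\sqrt{S}$ term produces precisely the $\lambda\sqrt{k^2\log n/(n\eps)}$ contribution in the theorem.

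Finally, I chain triangle inequalities
\begin{equation*}
\delta_2\!\left(W,\tfrac{1}{\hrho}\W{\hat B}\right) \;\le\; \tfrac{1}{\hrho}\hat\delta_2(\hat B,\Q) + \delta_2\!\left(\tfrac{1}{\hrho}\W{\Q},W\right),
\end{equation*}
bound the first term by $\tfrac{1}{\hrho}\bigl(\hat\delta_2(\hat B,G) + \|G-\Q\|_2\bigr)$, and combine with the candidate estimate from the previous paragraph and $\hat\delta_2(B^\star,G)\le\|B^\star_{\pi^\star}-\Q\|_2+\|G-\Q\|_2$. The second term contributes $\eps_n(W) + O_P(\lambda/(n\rho\eps))$ (the latter from replacing $\hrho$ by $\rho$ in the normalization, using $|\hrho-\rho|=O_P(1/(n\eps))$ and $\|W\|_2=O(\lambda)$). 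The remaining $\|G-\Q\|_2$ (equivalently, a uniform control on $\ip{G-\Q}{B^\star_{\pi^\star}}$) is handled by Bernstein with a union bound over $B\in\B_\mu$ and the relevant partitions $\pi$, and this is what produces the $\sqrt[4]{\lambda^2\log k/(\rho n)}$ term.

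The two steps I expect to be the main obstacle are (i) the \emph{stability} argument, i.e.\ turning an $S$-approximate \emph{score}-maximizer into an $\sqrt{S}$-approximate $L_2$-minimizer of $\hat\delta_2(\cdot,G)$ while keeping the right leading constant on $\oracle(W)+2\eps_n(W)$ (hence the factor $2$ on $\eps_n(W)$, which arises because $\eps_n(W)$ is used once to pass from $W$ to $\W{\Q}$ and once to pass from $\W{\Q}$ back to $B^\star$), and (ii) the combined concentration/union-bound of $\ip{G-\Q}{B_\pi}$ over all $(B,\pi)\in\B_\mu\times\{\text{equipartitions}\}$, which has to produce only a $\log k$ factor inside the fourth root after the $\sqrt{\cdot}$ from converting score gap to $L_2$ gap --- this requires a careful Bernstein-style bound exploiting that both $B_\pi$ and $\Q$ have $L_\infty$ norm $O(\lambda\rho)$ rather than $O(1)$.
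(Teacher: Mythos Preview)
Your outline contains the right ingredients (Laplace tail for $\hrho$, degree bound so that $\hscore=score$, exponential-mechanism utility, Bernstein concentration of $\langle A-\Q,B_\pi\rangle$), but the chaining step through $\|G-\Q\|_2$ is a genuine gap that breaks the argument.

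You pass from a score gap to $\hat\delta_2(\hat B,G)\le \hat\delta_2(B^\star,G)+\sqrt S$ (correct), and then write
\[
\hat\delta_2(\hat B,\Q)\le \hat\delta_2(\hat B,G)+\|G-\Q\|_2,
\qquad
\hat\delta_2(B^\star,G)\le \|B^\star_{\pi^\star}-\Q\|_2+\|G-\Q\|_2.
\]
But $\|G-\Q\|_2$ is \emph{not} small: since $A_{ij}\in\{0,1\}$ and $\E[(A_{ij}-\Q_{ij})^2]=\Q_{ij}(1-\Q_{ij})$, one has $\E\|G-\Q\|_2^2=\Theta(\rho)$, so $\|G-\Q\|_2=\Theta_P(\sqrt\rho)$ and, after dividing by $\hrho$, you pick up a term of order $1/\sqrt\rho$, which diverges and is nowhere in the theorem's bound. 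Your parenthetical ``equivalently, a uniform control on $\ip{G-\Q}{B^\star_{\pi^\star}}$'' is not an equivalence: the inner product $\ip{A-\Q}{B_\pi}$ is mean-zero and concentrates at scale $\mu\sqrt{\rho(\log k)/n}$, whereas $\|G-\Q\|_2$ does not concentrate near zero at all. No Bernstein bound will make $\|G-\Q\|_2$ small.

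The paper avoids this by never routing through $\hat\delta_2(\cdot,G)$. It uses (Lemma~\ref{lem:expectations}) the identity $\E[score(B,\pi;G)]=\|\Q\|_2^2-\|\Q-B_\pi\|_2^2$, so that
\[
\hat\delta_2^2(\hat B,\Q)-\hat\delta_2^2(B',\Q)
\;\le\; \bigl(score(B';G)-score(\hat B;G)\bigr)
+2\sup_{B,\pi}\bigl|\ip{A-\Q}{B_\pi}\bigr|,
\]
and only the \emph{inner product} fluctuation appears (Lemma~\ref{lem:concentration}); this is Proposition~\ref{Cor:Concentration}. The fourth-root term then comes from taking square roots of $\hat\delta_2^2(\hat B,\Q)\le \hat\delta_2^2(B',\Q)+\approxerr^2+O(\mu\sqrt{\rho(\log k)/n})$. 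In other words, the correct place to insert concentration is at the level of \emph{squared} $L_2$ distances to $\Q$, not via a triangle inequality with $\|G-\Q\|_2$. If you rewrite your argument to compare $\hat\delta_2^2(\cdot,\Q)$ directly via expected scores and control $\ip{A-\Q}{B_\pi}$ uniformly over $(B,\pi)$, the rest of your plan (candidate $B^\star$, degree event, exponential-mechanism utility, and the $\eps_n(W)$ bookkeeping) goes through essentially as in the paper.
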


The theorem will be proven in Section~\ref{sec:private-analysis}.

In the course of the proof,
%In the course of proving this theorem,
we will prove results
on the performance of a non-private algorithm, which is a variant
of the standard least square algorithm, the main difference being that instead
of minimizing $\hat\delta_2(B,A)$ over all matrices  $B$, we only optimize it over
matrices whose entries are bounded by a constant times the density of $G$.

  \begin{algorithm}
    \caption{Nonprivate Algorithm}
    \label{alg:main-algo'}
    \KwIn{$\lambda\geq 1$, an integer $k$ and graph $G$ on $n$
      vertices.}  \KwOut{$k\times k$ block graphon (represented as a
      $k\times k$ matrix $B$) estimating $\rho W$}

    % \label{step:rho-approx'}
    $\mu \gets \lambda \rho(G)$ (the target $L_\infty$ norm for the
    matrix $B$) \;

    \Return $\displaystyle \hat B \in {\argmin_{B\in\B_\mu} \hat \delta_2(B;G)}$.
  \end{algorithm}

\begin{theorem}[Performance of the Nonprivate Algorithm]\label{thm:final'}
  Let $W:[0,1]^2\to [0,\Lambda]$ be a normalized graphon, let
  $0<\rho\Lambda\leq 1$, let $G=G_n(\rho W)$, $\lambda\geq 1$, and $k$
  be an integer.  If $\hat B$ is the least-squares estimator
  (Algorithm~\ref{alg:main-algo'}), $2\Lambda\leq \lambda \leq \sqrt
  n$, $2\leq k\leq\min\{n\sqrt{\frac\rho2} ,e^{\frac{\rho n}2}\} $,
  then
\[
  \delta_2\Bigl(W,\frac 1{\rho(G)}\W{\hat B}\Bigr)
  \leq \oracle(W)+ 2\eps_n(W)
  +  O_p\paren{\sqrt[4]{\lambda^2\paren{\frac{\log k}{\rho n}
  +\frac {k^2}{\rho n^2}}}}.
  \]
In particular,
$
\delta_2\Bigl(W,\frac 1{\rho(G)}\W{\hat B}\Bigr)\to 0
$
in probability if
$k\to\infty$
and $\lambda^2\Bigl({\frac{\log k}{\rho n}
  +\frac {k^2}{\rho n^2}}\Bigr)\to 0$.
\end{theorem}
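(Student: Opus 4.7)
The plan is to pass through the sampled probability graphon $\W{\HnW}$: by the triangle inequality,
\[
\delta_2\bigl(W,\tfrac{1}{\rho(G)}\W{\hat B}\bigr) \;\leq\; \delta_2\bigl(W,\W{\HnW}\bigr) + \delta_2\bigl(\W{\HnW},\tfrac{1}{\rho(G)}\W{\hat B}\bigr),
\]
where the first summand is bounded by $\eps_n(W)$ (since $\delta_2\leq\hat\delta_2$), and the second is at most $\hat\delta_2(\hat B,\Q)/\rho(G)$ with $\Q=\rho\HnW$, up to a lower-order error from replacing $\rho(G)\HnW$ by $\Q$ (controlled by the Bernstein bound $|\rho(G)-\rho|=O_P(\sqrt{\rho\Lambda/n^2})$ under $\rho n\geq 6\log n$).

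The heart of the argument is an oracle inequality for the constrained least-squares estimator. Because $\hat B$ minimises $\hat\delta_2(\cdot,A)$ over $\B_\mu$, for any comparison $B^*\in\B_\mu$ and any equipartition $\pi^*$ we have $\|\hat B_{\hat\pi}-A\|_2^2\leq\|B^*_{\pi^*}-A\|_2^2$. Expanding each side via the identity $\|M-A\|_2^2=\|M-\Q\|_2^2-2\ip{M-\Q}{A-\Q}+\|A-\Q\|_2^2$ and cancelling $\|A-\Q\|_2^2$ yields
\[
\hat\delta_2(\hat B,\Q)^2 \;\leq\; \|B^*_{\pi^*}-\Q\|_2^2 + 4\sup_{B\in\B_\mu,\,\pi}\bigl|\ip{B_\pi}{A-\Q}\bigr|.
\]
The supremum is controlled by Bernstein's inequality and a union bound. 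Since $\ip{B_\pi}{A-\Q}$ is linear in $B\in[0,\mu]^{k\times k}$, its sup is attained at a vertex of the cube, so only $2^{k^2}$ matrices $B$ need be considered; together with at most $k^n$ equipartitions $\pi$, and using the variance bound $\sum_{i<j}\Q_{ij}(1-\Q_{ij})\leq n^2\rho\Lambda$ with uniform coefficient bound $\mu$, Bernstein yields
\[
\sup_{B,\pi}\bigl|\ip{B_\pi}{A-\Q}\bigr| \;=\; O_P\!\left(\tfrac{\mu\sqrt{\rho\Lambda}}{n}\sqrt{k^2+n\log k}\right).
\]
Substituting $\mu=\Theta(\lambda\rho)$, dividing by $\rho^2$, and taking square roots produces the target error $O_P\bigl(\sqrt[4]{\lambda^2(\log k/(\rho n)+k^2/(\rho n^2))}\bigr)$, with the graphon bound $\Lambda=O(1)$ absorbed into the constant.

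It remains to exhibit a comparison $(B^*,\pi^*)$ with $\|B^*_{\pi^*}-\Q\|_2\leq\rho(\oracle(W)+\eps_n(W))$ up to lower-order terms. Let $\sigma^*$ be the permutation realising $\eps_n(W)=\hat\delta_2(\HnW,W)$; let $\tilde\pi$ assign vertex $i$ to the $\PP_k$-block $\lceil k\,x_{\sigma^*(i)}\rceil$; modify $\tilde\pi$ to an equipartition $\pi^*$ by reassigning the $O_P(\sqrt{nk\log k})$ vertices that overflow the capacity $n/k$; and let $B^*$ be the $k\times k$ matrix $\rho W_{\PP_k}$ rounded to multiples of $1/n$. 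Since $\lambda\geq 2\Lambda$ and $\rho(G)\geq\rho/2$ with high probability, $B^*\in\B_\mu$. A triangle inequality then splits $\|B^*_{\pi^*}-\Q\|_2$ into the oracle approximation error ($\rho\oracle(W)$), a term absorbed by $\rho\eps_n(W)$ coming from the relabelling by $\sigma^*$, a negligible U-statistic fluctuation of order $\rho/\sqrt n$, and a reassignment error of order $\rho\Lambda(k\log k/n)^{1/4}$ (from binomial fluctuations of bin sizes), which is dominated by the main error term whenever $k^2/(\rho n^2)\to 0$.

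The main obstacle is this last step: the construction of $(B^*,\pi^*)$ must control the binomial irregularities in the block sizes tightly enough that the reassignment loss does not inflate the bound beyond the stated rate, and the discretisation of $\rho W_{\PP_k}$ to multiples of $1/n$ must be handled without exceeding the $\B_\mu$ constraint. Once the three steps above are combined, the convergence statement follows immediately: $\oracle(W)\to 0$ as $k\to\infty$ by the Lebesgue density theorem, $\eps_n(W)\to 0$ almost surely by Lemma~\ref{lem:\Hn-Convergence}, and the explicit error term tends to zero exactly under the stated hypothesis.
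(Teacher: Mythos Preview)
Your overall strategy---triangle inequality through $\HnW$, the oracle inequality $\hat\delta_2(\hat B,\Q)^2\le\|B^*_{\pi^*}-\Q\|_2^2+4\sup_{B,\pi}|\langle B_\pi,A-\Q\rangle|$, and Bernstein plus a union bound over $2^{k^2}$ extreme matrices and $k^n$ equipartitions---matches the paper (Lemmas~\ref{lem:expectations}--\ref{lem:concentration} and Proposition~\ref{Cor:Concentration}), and that part is correct.

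The gap is your comparison construction. The reassignment error $\Lambda(k\log k/n)^{1/4}$ is \emph{not} dominated by the displayed rate: with $\rho,\Lambda,\lambda$ constant and $k=n^{1/3}$ the explicit error is of order $(\log n/n)^{1/4}$ while your reassignment term is of order $(n^{-2/3}\log n)^{1/4}$, larger by a factor $n^{1/12}$; it is not absorbed by $\eps_n(W)$ either (take $W$ $\alpha$-H\"older with $\alpha>1/2$, so $\eps_n(W)=O_P(n^{-\alpha/2})$). The ``term absorbed by $\rho\eps_n(W)$'' also fails as written: your split produces $\|(\HnW)^{\sigma^*}-\HnW\|_2$, which is bounded only by $\eps_n(W)+\|W-\W{\HnW}\|_2$, and the second summand is the \emph{un}-optimised $L_2$ distance, not $\eps_n(W)$. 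The paper sidesteps both issues by never using the random positions $x_i$: it fixes $B$ to be the minimiser in $\oracle(W)$ and $\pi$ the \emph{deterministic} standard equipartition of $[n]$, writes $\hat\delta_2(B,\HnW)=\min_\sigma\|B_\pi-(\HnW)^\sigma\|_2$ via Lemma~\ref{lem:equi-part}, and applies a graphon-level triangle inequality through $W$ to obtain $\horacle(\HnW)\le\oracle(W)+\eps_n(W)+O(\sqrt{\lambda k/n})$, the last term from Lemma~\ref{lem:equi-part-bd} (comparing $\W{B}$, with $k$ equal blocks, to $\W{B_\pi}$, with $n$ blocks grouped into $k$ nearly-equal classes). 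That same lemma is also missing from your first step: your passage from $\delta_2(\W{\HnW},\tfrac{1}{\rho(G)}\W{\hat B})$ to $\hat\delta_2(\hat B,\Q)/\rho(G)$ skips exactly this $k$-block-versus-$n$-block correction.
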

Theorem~\ref{thm:final'} is proven in Section~\ref{sec:Least-2-error}.

\begin{remark}
While
Theorem \ref{thm:final} and Theorem~\ref{thm:final'}
are
stated in term of bounds which hold in probability, our
proofs give slightly more, and allow us in particular to prove
statements which hold almost surely as $n\to\infty$.
Namely, they show that under the assumptions of Theorem~\ref{thm:final'}, the output $\hat B$ of
the nonprivate algorithm is such that
\[
\begin{aligned}
\delta_2\Bigl(W,\frac 1{\rho(G)}\W{\hat B}\Bigr)
  &\leq \oracle(W)+O\paren{\sqrt[4]{\frac{\lambda^2\log k}{\rho n}+\frac {\lambda^2k^2}{\rho n^2}}}+o(1);
  \end{aligned}
\]
they also show that if we replace the assumption $n\rho\geq 6\log n$ in Theorem~\ref{thm:final} by the
stronger assumption
$n\rho\eps/\log n\to\infty$, then the output $\hat B$ of
the private algorithm is such that
\[
\begin{aligned}
\delta_2\Bigl(W,\frac 1{\hat\rho}\W{\hat B}\Bigr)
 \leq \oracle(W)
 +  O\paren{\sqrt[4]{\frac{\lambda^2\log k}{\rho n}
% +\frac {\lambda^2k^2}{\rho n^2}
% REDUNDANT TERM
}
 +\lambda\sqrt{\frac{k^2\log n}{n\eps}}
 {+\frac {\sqrt\lambda}{n\rho\eps}}
 }+o(1)
  \end{aligned}
\]
where in both expressions, $o(1)$ is a term which goes to zero with probability one as $n\to\infty$.

Thus for both algorithm, as long as $k$ grows sufficiently slowly with $n$, with probability one, the
asymptotic error is of the form $\oracle(W)+o(1)$, which is best possible, since we can't do better than
the best oracle block model approximation.
\end{remark}

\begin{remark}
\label{rem:Hoelder-Cont-W}
Under additional assumptions on the graphon $W$, we can say a little more.  For example, if we assume that
$W$ is H\"older continuous, i.e, if we assume that the exists constants $\alpha\in(0,1]$ and $C<\infty$
such that
$
|W(x,y)-W(x',y')|\leq C\delta^\alpha
$
whenever $|x-x'|+|y-y'|\leq\delta$, then we
have that $\oracle(W)=O(k^{-\alpha})$ and $\eps_n(W)=O_P(n^{-\alpha/2})$.
See Appendix~\ref{sec:Holder} for details.
%can prove a bound
%\[
%  \delta_2\Bigl(W,\frac 1{\rho(G)}\W{\hat B}\Bigr)
%   =O_p\paren{n^{-\alpha/2}+k^{-\alpha}+\sqrt\lambda\Bigl({\frac{\log k}{\rho n}
% +\frac {k^2}{\rho n^2}
%}\Bigr)^{1/4}}.
%  \]
%  for the nonprivate algorithm, and a bound
%  \[
%\begin{aligned}
%\delta_2\Bigl(W,\frac 1{\hat\rho}\W{\hat B}\Bigr)
% = O_P\paren{n^{-\alpha/2}+k^{-\alpha}+\sqrt[4]{\frac{\lambda^2\log
%       k}{\rho n}
%% +\frac {\lambda^2k^2}{\rho n^2}
%% REDUNDANT TERM
%}
% +\sqrt{\frac{\lambda^2k^2\log n}{n\eps}}+\frac {\lambda}{n\rho\eps}}.
%  \end{aligned}
%\]
%for the private algorithm.
%See Appendix~\ref{sec:Holder} for details.
\end{remark}

Theorems~\ref{thm:final} and ~\ref{thm:final'} imply that the sets of fractional $q$-way cuts of the estimator $\hat B$ from these theorems provide
good approximations to the $q$-way cuts of the graph $G$ (as defined
in Section~\ref{sec:W-rand}).  Specifically:

\begin{theorem}\label{thm:cuts}
Let $q\geq 2$ be an integer.

(i)
Under the assumptions of Theorem~\ref{thm:final'},
\[
d_\infty^{\text{Haus}}(S_q(G),\hat S_q(\hat B_{\text{nonprivate}}))=
O_p\paren{
\oracle(W)+ \eps_n(W)
  +  \sqrt[4]{\lambda^2\paren{{\frac{\log k}{\rho n}
  +\frac {k^2}{\rho n^2}}}}}.
\]

(ii) Under the assumptions of Theorem~\ref{thm:final},
\[
d_\infty^{\text{Haus}}(S_q(G),\hat S_q(\hat B_{\text{private}}))
=O_p\paren{
\oracle(W)+ \eps_n(W)+\sqrt[4]{\frac{\lambda^2\log k}{\rho n}
}
 +\lambda\sqrt{\frac{k^2\log n}{n\eps}}+\frac {{ \lambda}}{n\rho\eps}}.
\]
\end{theorem}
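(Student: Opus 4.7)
The plan is to reduce $d_\infty^{\text{Haus}}(S_q(G),\hat S_q(\hat B))$ to a cut-metric distance between normalized graphons, then control that distance via the triangle inequality through $W$: one side uses convergence of sparse $W$-random graphs in the cut metric, the other side uses Theorem~\ref{thm:final'} or Theorem~\ref{thm:final}. Write $\tilde G=\frac{1}{\rho(G)}\W{G}$ and $\tilde B=\frac{1}{\|\W{\hat B}\|_1}\W{\hat B}$ for the density-one renormalizations; both $W$ and $\tilde B$ are bounded graphons, while $\tilde G$ has $L_1$-norm one but is pointwise unbounded, so we must work in $\delta_\square$ rather than $\delta_2$.

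The first step is a quantitative quotient-continuity estimate. For any two graphons $U_1,U_2$ of $L_1$-norm one and any fractional $q$-partition $\rho$,
\[
\bigl|\beta_{ij}(U_1/\rho)-\beta_{ij}(U_2/\rho)\bigr|=\Bigl|\int\rho_i(x)\rho_j(y)(U_1-U_2)(x,y)\,dx\,dy\Bigr|\leq\|U_1-U_2\|_\square,
\]
because the supremum of the bilinear form $(f,g)\mapsto\int fg(U_1-U_2)$ over $[0,1]$-valued $f,g$ is attained on indicators. Since $\alpha_i(U/\rho)$ depends only on $\rho$ and $\hat S_q(U)$ is invariant under measure-preserving bijections, this gives $d_\infty^{\text{Haus}}(\hat S_q(U_1),\hat S_q(U_2))\leq\delta_\square(U_1,U_2)$. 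In addition, any fractional $q$-partition of $[0,1]$ can be rounded to an integer $q$-partition of $[n]$ with each weight $\alpha_i,\beta_{ij}$ shifting by at most $O(q/n)$, yielding $d_\infty^{\text{Haus}}(S_q(G),\hat S_q(\tilde G))=O(q/n)$. Combining, the triangle inequality gives
\[
d_\infty^{\text{Haus}}(S_q(G),\hat S_q(\hat B))\leq \delta_\square(\tilde G,W)+\delta_\square(W,\tilde B)+O(q/n).
\]

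For the first summand, Lemma~\ref{lem:\Hn-Convergence} together with $\delta_\square\leq\delta_2$ gives $\hat\delta_\square(\Hn(W),W)\leq\eps_n(W)$, while a Bernstein-type union bound over a discretization of rectangles $S\times T\subset[0,1]^2$ shows $\delta_\square(\W{G}/\rho,\Hn(W))=O_p\bigl(\sqrt{\log n/(n\rho)}\bigr)$; the additional replacement of $\rho$ by $\rho(G)$ contributes $O_p(1/\sqrt{\rho n^2})$. Each of these is dominated by the third summand in either of the main theorems, so the contribution of $\delta_\square(\tilde G,W)$ is absorbed. For the second summand, $\delta_\square(W,\tilde B)\leq\delta_2(W,\tilde B)$ (using $\|U\|_\square\leq\|U\|_2$ on bounded graphons), and the right-hand side differs from $\delta_2(W,\frac{1}{\rho(G)}\W{\hat B})$ in case (i) or from $\delta_2(W,\frac{1}{\hat\rho}\W{\hat B})$ in case (ii) only by the rescaling factor $\|\W{\hat B}\|_1/\rho(G)$ or $\|\W{\hat B}\|_1/\hat\rho$. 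All three densities $\|\W{\hat B}\|_1$, $\rho(G)$, $\hat\rho$ concentrate around $\rho$ up to an additive $O_p\bigl(1/(n\eps)+\sqrt{\rho/n}\bigr)$, so this rescaling introduces a correction dominated by the error terms already present in Theorems~\ref{thm:final'} and~\ref{thm:final}; applying those theorems finishes parts (i) and (ii) respectively.

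The main obstacle is the quantitative cut-norm concentration of $\tilde G$ around $W$ for sparse $W$-random graphs — a Bernstein/union-bound argument in the spirit of \cite{BCCZ14a,BCCZ14b} — together with the bookkeeping of the three density normalizations $\rho(G)$, $\hat\rho$, and $\|\W{\hat B}\|_1$, making sure the passage between them is absorbed into the big-$O_p$ bounds. The quotient-continuity estimate itself is routine given the characterization of the cut norm via integrals against products of $[0,1]$-valued functions, and no $q$-dependent factor appears in the final bound, matching the fact that $q$ is a fixed constant.
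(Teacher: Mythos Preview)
Your overall architecture is the same as the paper's: use that $\hat S_q$ is Lipschitz in the normalized cut distance, split via the triangle inequality into a ``$G$ close to the model'' piece and a ``$\hat B$ close to the model'' piece, and bound the latter by $\delta_2$ via Theorem~\ref{thm:final'} or~\ref{thm:final}. The quotient-continuity bound and the cut-norm concentration you invoke are both correct. The gap is in the rounding step.

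You assert $d_\infty^{\text{Haus}}(S_q(G),\hat S_q(\tilde G))=O(q/n)$. This is not true for sparse $G$. The $\beta_{ij}$'s carry a factor $1/\|G\|_1\approx 1/\rho$, so reassigning a single vertex of degree $\Theta(\rho n)$ already moves some $\beta_{ij}$ by $\Theta(1/n)$; randomized rounding over all $n$ vertices cannot do better than $\Theta(1/\sqrt n)$ in the unnormalized scale. The bound from \cite{BCLSV12} that the paper records as \eqref{S-hatS-bound} reads $d_\infty^{\text{Haus}}(S_q(G),\hat S_q(G))\leq 5/(\|G\|_1\sqrt n)$ for graphs with edge weights in $[0,1]$, which for a $0$--$1$ graph of density $\rho$ is $O(1/(\rho\sqrt n))$. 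Under the hypotheses of Theorem~\ref{thm:final} or~\ref{thm:final'} one may have $\rho=\Theta((\log n)/n)$, making this $\Theta(\sqrt n/\log n)$, which is not absorbed by any of the target error terms.

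The paper sidesteps this by routing through the edge-probability matrix $\Q=\Hn(\rho W)$ rather than through $W$. The chain is
\[
S_q(G)\ \longrightarrow\ S_q(\Q)\ \longrightarrow\ \hat S_q(\Q)\ \longrightarrow\ \hat S_q(\hat B),
\]
where the first arrow uses \eqref{cut-bound} together with the concentration $\hat\delta_\square(G,\Q)=O\bigl(\rho/\sqrt{\rho n}\bigr)$ from \cite[Lemma~7.2]{BCCZ14a}, and the third arrow uses \eqref{cut-bound-W} and the $\hat\delta_2$ bound of Theorem~\ref{thm:H'} or~\ref{thm:H}. The crucial point is the middle arrow: the rounding bound \eqref{S-hatS-bound} is applied not to $G$ but to $\Q'=\Q/\|\Q\|_\infty$, which has edge weights in $[0,1]$ and $L_1$-norm $\|\Q\|_1/\|\Q\|_\infty\geq c/\Lambda$ with high probability, yielding the much sharper $d_\infty^{\text{Haus}}(S_q(\Q),\hat S_q(\Q))=O(\Lambda/\sqrt n)$. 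That single change---performing the integer/fractional comparison on the bounded-entry matrix $\Q$ rather than on the sparse $0$--$1$ matrix $G$---is what makes the argument go through for sparse graphs. Your decomposition through $W$ can be repaired, but only by replacing the $O(q/n)$ claim with a rounding argument that explicitly exploits the high-probability degree bound $d_{\max}(G)=O(\lambda\rho n)$; as written, the step fails.
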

The proof of the theorem relies on the theory of graph convergence, in particular
 the results of
 \cite{BCLSV12,BCCZ14a,BCCZ14b}, and is given in Appendix~\ref{app:cuts}.
\begin{remark}
When considering the ``best'' block model approximation to $W$, one might want to
consider block models with unequal block sizes; in a similar way, one might want
to construct a private algorithm that outputs a block model with unequal size blocks,
and produces a bound in terms of this best block model approximation
instead of $\oracle(W)$.  With more cumbersome notation, this
can be easily proved with our methods,
with the minimal block size taking the role of $1/k$ in all our proofs. We leave the details to a journal version.
\end{remark}

\section{Estimation Error of the Least Square Algorithm}
\label{sec:Least-2-error}

At a high level, our proofs of Theorems~\ref{thm:final} and
of \ref{thm:final'}
follow from the fact that for all $B$ and $\pi$,
the expected score $\E[Score(B,\pi;G)]$
is equal to the score $Score(B,\pi;\Q)$, combined with a concentration
argument.  As a consequence, the maximizer $\hat B$
of $Score(B;G)$ will approximately minimize the $L_2$-distance $\hat\delta_2(B,\Q)$, which in turn will approximately
minimize $\|\frac 1\rho\W{B}- W\|_2$, thus relating the
$L_2$-error of our estimator $\hat B$ to the
``oracle error'' $\oracle(W)$ defined in \eqref{eps-k}.

In this section we present the analysis of exact and
  approximate least squares. This allows us to analyze the nonprivate
  algorithm. The analysis of the private algorithm (Theorem
\ref{thm:final}) requires additional arguments relating the private
approximate maximizer to the nonprivate one; we present these in
Section~\ref{sec:private-analysis}).

Our main concentration statement is contained in the following
proposition, which we prove  {in Section~\ref{sec:concentration} below.}
To state it, we define, for every symmetric $n\times n$ matrix $\Q$ with vanishing  diagonal, $Bern_{0}(\Q)$
to be the distribution over symmetric matrices $A$ with zero diagonal such that the entries $\{A_{ij}\colon i<j\}$ are independent Bernouilli random variables with $\E A_{ij}=\Q_{ij}$.

\begin{prop}\label{Cor:Concentration}
Let $\mu>0$, $\Q\in [0, {1}]^{n\times n}$  be a symmetric matrix with vanishing diagonal,
and $A \sim Bern_{0}(\Q)$.
If
    $2\leq k\leq\min\{n\sqrt{\rho(\Q) ,e^{\rho(\Q)n}\} }$
and $\hat B\in\B_\mu$ is such that
\[Score(\hat B;A)\geq \max_{B\in \B_\mu}Score(B;A)-\approxerr^2
\]
for some $\approxerr>0$,
then with probability at least $1-2e^{-n}$,
\begin{equation}
\label{hatdelta-bd}
\hat\delta_2(\hat B,\Q)\leq \min_{B\in \B_\mu}\hat\delta_2(B,\Q)+\approxerr +
O\paren{\sqrt[4]{\mu^2 \rho(\Q)\paren{\frac{k^2}{ n^2} + \frac {\log k}{ n}}}}
\end{equation}
and in particular %%% ADS: MOVE THIS
\begin{equation}\label{hatb-2-bd}
\begin{aligned}
\|\hat B\|_2
&\leq (2\|\Q\|_2+ \approxerr)\paren{1+\frac {2k}n}
+ O\paren{\sqrt[4]{\mu^2 \rho(\Q)\paren{\frac{k^2}{ n^2} + \frac {\log k}{ n}}}}
\\
&\leq (2\|\Q\|_2+ \approxerr)\paren{1+\frac {2k}n}+ O\paren{\sqrt{\mu \rho(\Q)}}
\end{aligned}
\end{equation}
\end{prop}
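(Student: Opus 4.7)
The plan is to exploit the key identity $\E[Score(B,\pi;A)] = Score(B,\pi;Q)$, which follows by linearity from $Score(B,\pi;A) = 2\langle A, B_\pi\rangle - \|B_\pi\|_2^2$ and $\E[A]=Q$, together with the deterministic identity $Score(B;Q) = \|Q\|_2^2 - \hat\delta_2(B,Q)^2$. Together these say that maximizing $Score(\cdot;Q)$ over $\B_\mu$ is the same problem as $\hat\delta_2$-minimization against $Q$, so the proof reduces to a uniform concentration statement: $Score(B,\pi;A)$ is close to its mean $Score(B,\pi;Q)$ for all $B\in\B_\mu$ and all equipartitions $\pi$ simultaneously.

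The fluctuation $Score(B,\pi;A) - Score(B,\pi;Q) = 2\langle A - Q, B_\pi\rangle$ is a sum over $i<j$ of independent, mean-zero, bounded random variables with total variance at most $\frac{1}{n^4}\sum_{i,j}Q_{ij}B_{\pi(i)\pi(j)}^2 \le \mu^2\rho(Q)/n^2$ (crucially, one bounds only \emph{one} copy of $B_{\pi(i)\pi(j)}$ by $\mu$ and retains the factor $Q_{ij}$). I would apply Bernstein's inequality and then union bound over $\B_\mu$ (cardinality $\le (\mu n+1)^{k(k+1)/2}$) and over equipartitions (cardinality $\le k^n$), with a deviation parameter of order $n + k^2\log(\mu n) + n\log k$, yielding with probability $\ge 1 - 2e^{-n}$ the uniform bound
\[
\sup_{B\in\B_\mu,\pi}|Score(B,\pi;A) - Score(B,\pi;Q)| \le \xi^2,
\qquad \xi^2 = O\!\left(\sqrt{\mu^2\rho(Q)\!\left(\frac{k^2}{n^2} + \frac{\log k}{n}\right)}\right).
\]
The hypotheses $k\le n\sqrt{\rho(Q)}$ and $k\le e^{\rho(Q)n}$ are calibrated precisely so that the linear Bernstein tail $O(\mu t/n^2)$ never dominates the Gaussian tail $O(\mu\sqrt{\rho(Q)\, t}/n)$ and so that various logarithmic factors collapse into the clean form above.

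Given this uniform closeness, the assumed $\nu^2$-approximate maximizer $\hat B$ of $Score(\cdot;A)$ is automatically a $(\nu^2+2\xi^2)$-approximate maximizer of $Score(\cdot;Q)$, which by the $\hat\delta_2$-identity translates to $\hat\delta_2(\hat B,Q)^2 \le \min_{B\in\B_\mu}\hat\delta_2(B,Q)^2 + \nu^2 + 2\xi^2$. Taking square roots via $\sqrt{a+b+c}\le\sqrt{a}+\sqrt{b}+\sqrt{c}$ yields \eqref{hatdelta-bd}. For \eqref{hatb-2-bd}, I would apply the triangle inequality $\|\hat B_{\pi^*}\|_2 \le \|Q\|_2 + \hat\delta_2(\hat B,Q)$ at the optimal $\pi^*$, combine with the trivial bound $\min_{B\in\B_\mu}\hat\delta_2(B,Q)\le\hat\delta_2(0,Q)=\|Q\|_2$, and then convert from $\|\hat B_{\pi^*}\|_2$ (averaged over $n^2$) to $\|\hat B\|_2$ (averaged over $k^2$) using the fact that equipartitions have block sizes in $\{\lfloor n/k\rfloor,\lceil n/k\rceil\}$; this comparison of normalizations produces the $(1+2k/n)$ factor. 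The second, coarser form of \eqref{hatb-2-bd} is then immediate from the first: the hypotheses on $k$ bound both $k^2/n^2$ and $\log k/n$ by $\rho(Q)$, collapsing the fourth root to $O(\sqrt{\mu\rho(Q)})$.

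The main obstacle will be the concentration step, specifically getting the variance proxy to scale as $\mu^2\rho(Q)/n^2$ rather than the trivial $\mu^2/n^2$. The extra factor $\rho(Q)$ is essential for the bound to be meaningful in the sparse regime, and it is precisely what lets $\xi^2$ scale with $\sqrt{\rho(Q)}$. The hypotheses on $k$ are tailored to ensure Bernstein's Gaussian/linear crossover happens on the right side, and one must then check that the resulting $\xi^2$ combines cleanly with $\nu$ under the square root to give the exact form appearing in \eqref{hatdelta-bd}.
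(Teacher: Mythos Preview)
Your strategy is exactly the paper's: use $\E[Score(B,\pi;A)]=Score(B,\pi;Q)$ and $Score(B;Q)=\|Q\|_2^2-\hat\delta_2(B,Q)^2$ to reduce to a uniform concentration bound on $2\langle A-Q,B_\pi\rangle$, then pass the $\nu^2$-approximate maximizer through and take square roots; your derivation of \eqref{hatb-2-bd} via $\|\hat B_\pi\|_2\le\|Q\|_2+\hat\delta_2(\hat B,Q)$, the bound $\min_{B}\hat\delta_2(B,Q)\le\|Q\|_2$, and the block-size comparison is also what the paper does.

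There is one small but real gap in the concentration step. You union bound over all of $\B_\mu$, of cardinality $\approx(\mu n)^{k^2/2}$, which puts a term $k^2\log(\mu n)$ into the exponent; this both inserts an unwanted $\log(\mu n)$ into the $k^2/n^2$ term of $\xi^2$ and, more importantly, breaks your claim that the hypotheses $k\le n\sqrt{\rho(Q)}$ and $k\le e^{\rho(Q)n}$ keep you in the Gaussian Bernstein regime (you would need $k^2\log(\mu n)\le \rho(Q)n^2$, which is not implied). The paper avoids this by observing that for fixed $\pi$ the map $B\mapsto\langle A-Q,B_\pi\rangle$ is linear, so its supremum over $[0,\mu]^{k\times k}$ is attained at a vertex $B\in\{0,\mu\}^{k\times k}$; the union bound then runs over only $2^{k^2}\cdot k^n$ pairs, giving exactly $k^2+n\log k$ in the exponent and the clean form of \eqref{hatdelta-bd}. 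With that one-line reduction in place, your Bernstein argument goes through (the paper phrases it as a multiplicative Chernoff bound on $S=\frac{n^2}{2\mu}\langle A,B_\pi\rangle$ with $\mu_0=\Theta(n^2\rho(Q))$, but this is the same computation).
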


Morally, the proposition contains almost all that is needed to
establish the bound  \eqref{eq:main-bound-nonprivate} proving consistency
of the standard least squares algorithm (which, in fact, only
involves the
case $\approxerr=0$), even though there are several additional steps needed to
complete the proof (see Sections~\ref{app:prop-proof} and
\ref{app:prop-proof-completed} below).

%Some additional calculations are
%required to complete the proof of the bounds for the nonprivate
%algorithm (Theorem~\ref{thm:final'}). These calculations, as well as
%the proof of the corollary, are deferred to
%Appendix~\ref{sec:nonprivate-analysis}.

The proposition also contains an extra ingredient which is a
crucial input for the
analysis of the private algorithm: it states that if instead
of an optimal, least square estimator, we output an estimator whose
score is only approximately maximal, then the excess error introduced
by the approximation is small.  To apply the proposition, we then establish a
a lemma
which gives us a lower bound on the score of the output
$\hat B$ in terms of the maximal score and an excess error
$\approxerr$.

There are several steps needed to execute
this strategy, the most important ones involving
a rigorous control of the error introduced
by the Lipschitz extension inside the exponential algorithm
(which in turn requires estimating the deviation
of the maximal degree from the expected degree, a step
where the condition that $\rho n$ has to grow like $\log n$ is needed).
The excess error $\approxerr$ eventually turns into the second to last error term
in \eqref{eq:main-bound}, while the difference between
 our private estimator $\hat \rho$ for the edge density and the actual edge density
of $G$ is responsible for the last one.

The analysis of the private algorithm is presented in
Section~\ref{sec:private-analysis}; the remainder of this section
presents the detailed analysis of the least squares estimator.

\begin{remark}\label{rem:H-bound}
Note that for both the non-private algorithm and the private algorithm,
the above proposition naturally gives a bound for the $L_2$
estimation error for matrix of probabilities $\Q$.  In fact,
our proofs provide error bounds on $\hat\delta_2(\hat B,\Q)$
which differ from \eqref{eq:main-bound-nonprivate}, \eqref{eq:main-bound}
and the bounds in Theorem~\ref{thm:cuts}
in that
(i) the error term $2\eps_n(W)$ is absent, and
(ii) the oracle
error $\oracle(W)$ is replaced by an oracle error
$\horacle(\Hn)$ for $\Hn$,
see Theorems~\ref{thm:H'},  \ref{thm:H} and \ref{thm:cuts-from-H}.
Converting these bounds into bounds
on $\delta_2(W,\frac 1{\hat\rho}\W{\hat B})$ and expressing
the result in terms of
 $\oracle(W)$ instead of $\horacle(\Hn)$ then introduces the error term $2\eps_n(W)$
 in \eqref{eq:main-bound-nonprivate}, \eqref{eq:main-bound} and the bounds
 in Theorem~\ref{thm:cuts}.
\end{remark}

\subsection{Expectation and Concentration of Scores}
\label{sec:concentration}

The following two lemmas contain the core of the argument outlined at the beginning of
this section.
%Section~\ref{sec:Least-2-error}.
%To state them, we  define, for every symmetric $n\times n$ matrix $\Q$ with vanishing  diagonal, $Bern_{0}(\Q)$
%to be the distribution over symmetric matrices $A$ with zero diagonal such that the entries $\{A_{ij}\colon i<j\}$ are independent Bernouilli random variables with $\E A_{ij}=\Q_{ij}$.
\begin{lemma}[Expected scores]\label{lem:expectations}
Let $\Q\in [0,1]^{n\times n}$ be a symmetric matrix with vanishing diagonal,  let $A \sim Bern_{0}(\Q)$,
and let $B,B'$ be $k\times k$ matrices.  Then
    \[
    \hat\delta_2^2(\Q,B)-\hat\delta_2^2(\Q,B')
    =\max_{\pi'}\E[Score(B',\pi';G)]-\max_{\pi}\E[Score( B,\pi;G)],
    \]
where the two max's go over equipartitions $\pi,\pi':[n]\to [k]$.
\end{lemma}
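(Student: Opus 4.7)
The plan is to use the alternative form of the score from equation (\ref{score}), namely $Score(B,\pi;G) = 2\ip{A}{B_\pi} - \|B_\pi\|_2^2$, where $A$ is the adjacency matrix of $G$. Since $A\sim Bern_0(\Q)$ satisfies $\E[A_{ij}]=Q_{ij}$ for all $i,j$ (the vanishing diagonals agree), linearity of expectation immediately gives
\[
\E[Score(B,\pi;G)] \;=\; 2\ip{\Q}{B_\pi} - \|B_\pi\|_2^2\,.
\]
Note that this step crucially avoids invoking $\E[\|A\|_2^2]$, which would not equal $\|\Q\|_2^2$; this is exactly why the authors chose the $2\ip{G}{B_\pi}-\|B_\pi\|_2^2$ form of the score.

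The key algebraic observation is then that, up to the additive constant $\|\Q\|_2^2$ (which depends neither on $B$ nor on $\pi$), this expected score is the negative of the squared $L_2$-distance between $B_\pi$ and $\Q$. Expanding the square,
\[
\|B_\pi-\Q\|_2^2 \;=\; \|\Q\|_2^2 - 2\ip{\Q}{B_\pi} + \|B_\pi\|_2^2,
\]
so that $\E[Score(B,\pi;G)] = \|\Q\|_2^2 - \|B_\pi - \Q\|_2^2$.

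Maximizing over equipartitions $\pi:[n]\to[k]$ therefore converts the $\max$ on the right-hand side of the lemma into a $\min$ of $L_2$-distances, which by the very definition of $\hat\delta_2$ equals $\hat\delta_2^2(B,\Q)$:
\[
\max_\pi \E[Score(B,\pi;G)] \;=\; \|\Q\|_2^2 - \hat\delta_2^2(B,\Q).
\]
The same identity applies to $B'$. Subtracting the two identities, the $\|\Q\|_2^2$ terms cancel and yield exactly
\[
\max_{\pi'}\E[Score(B',\pi';G)] - \max_{\pi}\E[Score(B,\pi;G)] \;=\; \hat\delta_2^2(B,\Q) - \hat\delta_2^2(B',\Q),
\]
which by symmetry of $\hat\delta_2$ in its arguments is the left-hand side of the claim.

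There is no real obstacle: the proof is a direct computation exploiting linearity of expectation and the expand-the-square identity. The only subtlety worth flagging is the diagonal issue noted above, which is why the argument goes through the $2\ip{G}{B_\pi}-\|B_\pi\|_2^2$ representation rather than the original $\|G\|_2^2 - \|G-B_\pi\|_2^2$ form.
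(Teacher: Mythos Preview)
Your proof is correct and follows essentially the same approach as the paper: compute $\E[score(B,\pi;A)]=2\ip{\Q}{B_\pi}-\|B_\pi\|_2^2=\|\Q\|_2^2-\|\Q-B_\pi\|_2^2$ by linearity of expectation, then maximize over equipartitions and subtract. Your added commentary on the diagonal issue and the choice of score representation is a nice clarification, but the mathematical content is identical to the paper's argument.
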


  \begin{proof}
By linearity of expectation, we have
    \begin{align*}
      \E score(B,\pi;A) &= \E\paren{2\ip{A}{B_\pi} - \|B_\pi\|^2 } =
      2\ip{\Q}{B_\pi} -\|B_\pi\|_2^2 \\
      &= \|\Q\|^2_2-\|\Q-B_\pi\|_2^2\,.
    \end{align*}
Taking into account the definition of $\hat\delta_2(B,\Q)$, the lemma follows.
  \end{proof}

Our second lemma states that the realized scores are close to their
expected values. The proof is based on a careful application of the
concentration bounds. The argument is delicate because we must take
advantage of the low density (when $\rho$ is small).

%
%  {\color{DGreen}
%\begin{lemma}[Concentration of
%  scores]\label{lem:concentration}\anote{This is the old version of lemma, stated in terms of $L_\infty$. Keeping
%  this only for the sake of comparison}
%
%    Suppose
%   ${\blue \max\paren{\frac{{\blue 9}\ln (k)}{n}, \frac{9k^2}{n^2}}}\leq \maxH \leq 1$, {\blue $\mu\in (0,1]$} and $\eta\geq{\blue 2k^{-n}}$.  For every $\Q\in [0,\maxH]^{n\times n}$: If $A \sim Bern(\Q)$, then, with probability at least
%    $1-\eta$, we have that for all equipartitions $\pi$ and all $B \in {\blue [0,\mu]^{k\times k}}$,
%\[
%     |score(B,\pi;A) - \E\bracket{score(B,\pi;A)}| =
%    {\blue
%      O\paren{\mu\sqrt{\maxH}\cdot \paren{\sqrt{\frac{\log
%              k}{n}}+\frac {k}{n}}}
%    }
%\]
%  \end{lemma}
%}
  \begin{lemma}[Concentration of scores]\label{lem:concentration}
    Let $\mu>0$, let $\Q\in [0, 1]^{n\times
      n}$  be a symmetric matrix with vanishing diagonal and let
      $A \sim Bern_{0}(\Q)$.
     If $2\leq k\leq\min\{n\sqrt{\rho(\Q)} ,e^{\rho(\Q)n}\} $, then, with probability at least
    $1-2e^{-n}$
\[
     |score(B,\pi;A) - \E\bracket{score(B,\pi;A)}| =
      O\paren{\mu \,\sqrt{\rho(\Q)\paren{\frac{k^2}{ n^2} + \frac {\log k}{ n}}}}
\]
for all equipartitions $\pi$ and all $B \in [0,\mu]^{k\times k}$.
  \end{lemma}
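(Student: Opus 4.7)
The plan is to combine Bernstein's inequality with a two-step union bound, exploiting the fact that the centered score is a linear functional of $B$. The squared-norm term cancels between $score(B,\pi;A)$ and $\E[score(B,\pi;A)]$, leaving
\[
score(B,\pi;A) - \E[score(B,\pi;A)] = 2\langle A - \Q, B_\pi\rangle = \frac{4}{n^2}\sum_{i<j}(A_{ij}-\Q_{ij})B_{\pi(i)\pi(j)},
\]
which, for fixed $\pi$, is a linear function of the entries of $B$. Its supremum and infimum over the hypercube $[0,\mu]^{k\times k}$ are therefore attained at vertices $B \in \{0,\mu\}^{k\times k}$. So it will suffice to control the centered score uniformly over the $2^{k^2}$ vertex matrices and all equipartitions $\pi:[n]\to[k]$.

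For fixed $B \in \{0,\mu\}^{k\times k}$ and fixed $\pi$, the displayed sum is a sum of independent zero-mean random variables with each summand bounded by $4\mu/n^2$ and total variance at most $\frac{16\mu^2}{n^4}\sum_{i<j}\Q_{ij} \leq \frac{8\mu^2\rho(\Q)}{n^2}$. I will then apply Bernstein's inequality with $t^2 = C\frac{\mu^2\rho(\Q)}{n^2}(k^2+n\log k+n)$, obtaining a deviation probability of at most $2\exp(-\Omega(k^2+n\log k+n))$, provided we are in the Gaussian regime $t \lesssim \mu\rho(\Q)$. Under the hypotheses $k\leq n\sqrt{\rho(\Q)}$ and $k \leq e^{\rho(\Q)n}$, both $k^2/n^2$ and $\log k/n$ are at most $\rho(\Q)$, which verifies the Gaussian condition for the target choice of $t$.

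Next I take a union bound over the $k^n\cdot 2^{k^2}$ pairs $(\pi,B)$, contributing $(n\log k + k^2\log 2)$ to the exponent, which is absorbed into the Bernstein exponent for a suitable constant $C$ and leaves a net failure probability at most $2e^{-n}$. On the complementary event, linearity in $B$ together with the vertex-attainment observation promote the uniform bound from hypercube vertices to all $B \in [0,\mu]^{k\times k}$, yielding the stated bound $O\bigl(\mu\sqrt{\rho(\Q)(k^2/n^2 + \log k/n)}\bigr)$ after absorbing the $1/n$ term into $\log k/n$ using $k\geq 2$.

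The main obstacle is obtaining the sharper logarithmic factor $\log k$ rather than $\log n$. A naive $\eps$-net over $[0,\mu]^{k\times k}$ would contribute a factor of order $k^2\log(\mu n/\eps)$ to the union-bound exponent, producing a $\log n$ that would overshoot the target rate; exploiting the linearity of the centered score in $B$ cuts the effective cover to the $2^{k^2}$ vertices of the hypercube and matches the desired rate.
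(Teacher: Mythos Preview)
Your proposal is correct and follows essentially the same route as the paper: reduce the centered score to $2\langle A-\Q,B_\pi\rangle$, use linearity in $B$ to restrict to the $2^{k^2}$ vertices $\{0,\mu\}^{k\times k}$, apply a concentration inequality for a fixed pair $(B,\pi)$, and take a union bound over the at most $k^n\cdot 2^{k^2}$ pairs. The only cosmetic difference is that the paper rescales to write the inner product as a sum of $[0,1]$-valued variables and invokes a multiplicative Chernoff bound (Lemma~\ref{lem:chernoff-mult}) rather than Bernstein; your variance computation and the check that the hypotheses $k\le n\sqrt{\rho(\Q)}$ and $k\le e^{\rho(\Q)n}$ place you in the sub-Gaussian regime play exactly the role of the paper's verification that the Chernoff parameter $\beta\le 1$.
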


\begin{proof}
    First, consider a specific pair $B,\pi$.  Recall that
    $$score(B,\pi;A) - \E\bracket{score(B,\pi;A)} = 2\ip{A-\Q}{B_\pi}\,.$$
    We wish to bound the deviation of $score(B,\pi;A)$ from its
    mean. Set $\rho(\Q)=\trho$. The quantity $ S=\frac {n^2}{2\mu}\cdot
    \ip{A}{B_\pi}=\sum_{i<j}\frac{B_{\pi(i)\pi(j)}}\mu A_{ij}$ is a
    sum of $n\choose 2$ independent random variables in $[0,1]$ with
    expectation $\E{S}\leq \trho {n\choose 2}$.  Using a slight
    variation on the standard Chernoff bound, which we state in
    Lemma~\ref{lem:chernoff-mult}, we will bound the probability that
    $S$ deviates from its mean by at most $\beta\mu_0$, where
    $\mu_0\geq \E[S]$ will be chosen in a moment.  Setting
    $\eta=2e^{-n}$ and
    \[
    \beta=\sqrt{\frac{k^2 + n\log k+\log(2/\eta)}{3\trho n^2}}
    =O\paren{\sqrt{\frac{k^2}{\trho n^2} + \frac {\log k}{\trho n}}}
    \]
    the assumption $ k\leq\min\{n\sqrt{\rho(\Q)} n,e^{\rho(\Q)n}\} $
    implies $\beta\leq 1$, and setting $\mu_0=9n^2\trho$, the bound
    from Lemma~\ref{lem:chernoff-mult} becomes
    \[
    2e^{-{3\beta^2}\trho n^2} =e^{-k^2}k^{-n}\eta\leq
    2^{-k^2}k^{-n}\eta,
    \]
    implying that
      $$\Pr\left( |2\ip{A-\Q}{B_\pi}| \geq \frac{4\mu}{n^2}
        \beta\mu_0\right) \leq \frac\eta{k^n2^{k^2}}\,.$$

      Finally, we observe that for any $A$, the maximum of
      $|\ip{\Q-A}{B_\pi}|$ over all $B \in [0,\mu]^{k\times k}$ is the
      same as the maximum over all $B\in\{0,\mu\}^{k\times k}$.
      Taking a union bound over the (at most $2^{k^2}k^n$) pairs
      $B,\pi$ and observing that $ \frac{4\mu}{n^2}\beta\mu_0
      =O\paren{\mu \,\sqrt{\trho\paren{\frac{k^2}{ n^2} + \frac {\log
              k}{ n}}}} $, we get the statement of the lemma.
    \end{proof}

\begin{proof}[Proof of Proposition~\ref{Cor:Concentration}]
Let $\hat B\in\B_\mu$ be as specified, let $B'\in \B_\mu$ arbitrary, and  let
$\pi,\pi':{n}\to k$ be two equipartitions.
By Lemmas~\ref{lem:expectations} and \ref{lem:concentration},
\[
\begin{aligned}
\hat\delta_2^2(\Q,\hat B)-\hat\delta_2^2(\Q,B')
&=\max_{\pi'}\E[Score(B',\pi';G)]-\max_{\pi}\E[Score(\hat B,\pi;G)]
\\
&\leq Score(B';G)-Score(\hat B;G)+
O\paren{\mu \,\sqrt{\rho(\Q)\paren{\frac{k^2}{ n^2} + \frac {\log k}{ n}}}}
\\
&\leq \approxerr^2 + O\paren{\sqrt{\mu^2 \rho(\Q)\paren{\frac{k^2}{ n^2} + \frac {\log k}{ n}}}}
\end{aligned}
\]
which implies the bound \eqref{hatdelta-bd}.  (Taking square roots works since $\sqrt {\sum_i C_i^2}
\leq \sum_i C_i$ as long as $C_i\geq 0$.)
To prove \eqref{hatb-2-bd},
we use that for an arbitrary equipartition $\pi$
$\|\hat B_\pi\|_2^2\geq \Bigl(1-\frac kn\Bigr)\|\hat B\|_2^2$ and that
$\|\hat B_\pi\|_2\leq \|\Q\|_2+\|B_\pi -\Q\|_2$.  Inserting the definition
of $\hat\delta_2(\hat B,\Q)$ and using the main statement plus the assumptions $2\leq k\leq\min\{n\sqrt{\rho(\Q)} ,e^{\rho(\Q)n}\} $, we obtain \eqref{hatb-2-bd}.
\end{proof}

\subsection{Estimation of the edge-probability matrix $\Q$}
\label{app:prop-proof}

Up to technical details, Proposition~\ref{Cor:Concentration} contains all that is needed to prove consistency of the least square algorithm.
As indicated in Remark~\ref{rem:H-bound},
we will first prove that the algorithm gives a consistent estimator for the matrix $\Q$, and then use this prove that the output also gives a consistent estimator for $W$.  The first statement is formalized in the following theorem.

\begin{theorem}\label{thm:H'}
  Under the assumptions of Theorem~\ref{thm:final'},
\begin{equation}
\label{H'-bd}
  \hat\delta_2\Bigl(\frac 1{\rho(G)}\hat B,\HnW\Bigr)
  \leq \horacle(\HnW)
  +  O_p\paren{\sqrt\lambda\Bigl({\frac{\log k}{\rho n}
  +\frac {k^2}{\rho n^2}}\Bigr)^{1/4}}
  \end{equation}
  where
  \[\horacle(H)=
  \inf_B\hat\delta_2(B,H),
  \]
  with the $\inf$ going over all symmetric $k\times k$ matrices $B$.
  Moreover, a.s. as $n\to\infty$,
\[
\begin{aligned}
\hat\delta_2\paren{\frac 1{\rho(G)} \hat B,\HnW}
&\leq\horacle(\HnW) +
O\paren{\sqrt[4]{
        \lambda^2\paren{
           \frac{k^2}{ n^2\rho}
            + \frac{\log k}{ n\rho}
        }}}+o(1).
 \end{aligned}
\]
\end{theorem}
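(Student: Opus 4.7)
The plan is to invoke Proposition~\ref{Cor:Concentration} with $\Q=\rho\HnW$ and $\approxerr=0$, since Algorithm~\ref{alg:main-algo'} outputs an exact minimizer of $\hat\delta_2(B,G)$ over $B\in\B_\mu$, equivalently an exact maximizer of $score(B;G)$. Three pieces of bookkeeping then need to be done to line up the hypotheses of the proposition and to convert its conclusion into the desired bound: controlling the random parameters $\mu=\lambda\rho(G)$ and $\rho(\Q)$, passing from the restricted oracle error over $\B_\mu$ to the unrestricted $\horacle(\HnW)$, and finally rescaling from $\Q$ back to $\HnW$ while swapping $\rho$ for $\rho(G)$.

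First I would apply a Chernoff bound to the Bernoulli entries of $A$ conditional on $\Q$, combined with a U-statistic concentration of $\|\HnW\|_1$ around $\int W=1$ (quantifying Lemma~\ref{lem:\Hn-Convergence}), to show that $\rho(G),\rho(\Q)\in[\rho/2,2\rho]$ with probability at least $1-e^{-cn}$. On this event, the assumption $k\leq\min\{n\sqrt{\rho/2},e^{\rho n/2}\}$ upgrades to $k\leq\min\{n\sqrt{\rho(\Q)},e^{\rho(\Q) n}\}$, while $\lambda\geq 2\Lambda$ gives $\mu\geq\rho\Lambda\geq\|\Q\|_\infty$. Next, to bound the restricted oracle, I take the unrestricted minimizer $B^\star$: since $B^\star$ is a block-average of $\Q$ under the optimal equipartition, $\|B^\star\|_\infty\leq\|\Q\|_\infty\leq\mu$, and rounding each entry to the nearest multiple of $1/n$ produces a matrix in $\B_\mu$ whose inflated $L_2$-distance to $B^\star$ is $O(1/n)$. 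Hence $\min_{B\in\B_\mu}\hat\delta_2(B,\Q)\leq\horacle(\Q)+O(1/n)=\rho\,\horacle(\HnW)+O(1/n)$.

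Proposition~\ref{Cor:Concentration} now yields, with probability at least $1-2e^{-n}$,
\[
\hat\delta_2(\hat B,\Q)\;\leq\;\rho\,\horacle(\HnW) + O(1/n) + O\paren{\sqrt[4]{\lambda^2\rho^3\paren{\frac{k^2}{n^2}+\frac{\log k}{n}}}}.
\]
Dividing through by $\rho$, using $\hat\delta_2(c^{-1}B,c^{-1}\Q)=c^{-1}\hat\delta_2(B,\Q)$, produces $\hat\delta_2(\tfrac1\rho\hat B,\HnW)\leq\horacle(\HnW)+O(\tfrac{1}{\rho n})+O\bigl(\sqrt[4]{\lambda^2(\tfrac{k^2}{\rho n^2}+\tfrac{\log k}{\rho n})}\bigr)$, which is exactly the shape of the target bound. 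To pass from $1/\rho$ to $1/\rho(G)$, the triangle inequality at the optimal equipartition $\pi$ gives
\[
\hat\delta_2\paren{\tfrac{1}{\rho(G)}\hat B,\HnW} \;\leq\; \hat\delta_2\paren{\tfrac1\rho\hat B,\HnW} + \frac{|\rho-\rho(G)|}{\rho\,\rho(G)}\,\|\hat B\|_2,
\]
so I need an a priori bound on $\|\hat B\|_2$. Here the second estimate \eqref{hatb-2-bd} of Proposition~\ref{Cor:Concentration} gives $\|\hat B\|_2=O(\|\Q\|_2+\sqrt{\mu\rho(\Q)})=O(\rho\sqrt\lambda)$, and combined with $|\rho-\rho(G)|=O_P(\sqrt{\rho/n})$ from the first step the extra term is $O_P(\sqrt{\lambda/(\rho n)})$, which is absorbed by the main error whenever $(\rho n)\log k\gtrsim 1$.

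The main obstacle is coordinating these three normalization conversions (oracle restriction, $\Q\leftrightarrow\HnW$, and $\rho\leftrightarrow\rho(G)$) without losing a factor in the rate; the key input that makes the $\rho\to\rho(G)$ step cheap is precisely the $\|\hat B\|_2$ bound \eqref{hatb-2-bd} of Proposition~\ref{Cor:Concentration}. For the almost sure statement, every concentration event above holds with probability at least $1-e^{-cn}$, so Borel--Cantelli together with the a.s.\ convergence $\|\HnW\|_1\to 1$ from Lemma~\ref{lem:\Hn-Convergence} upgrades the $O_P$-bound to an $O(\cdot)+o(1)$ bound holding almost surely as $n\to\infty$.
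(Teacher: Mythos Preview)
Your proposal is correct and follows essentially the same route as the paper: apply Proposition~\ref{Cor:Concentration} with $\approxerr=0$ on the good event $\rho(\Q),\rho(G)\asymp\rho$, pass from the constrained minimum over $\B_\mu$ to $\horacle(\HnW)$ by block-averaging and rounding, and finally use the $\|\hat B\|_2$ bound \eqref{hatb-2-bd} to absorb the $\rho\to\rho(G)$ rescaling. The one substantive difference is how you control $\rho(\Q)$: you invoke a Hoeffding-type U-statistic inequality to get exponential tails for $\|\HnW\|_1$, which makes every failure probability summable and lets Borel--Cantelli handle the almost-sure statement directly; the paper instead uses only the variance bound (Lemma~\ref{lem:good-trho}, part~1) for the $O_P$ statement and, for the a.s.\ part, routes through the deterministic inequality $|\rho(\Q)/\rho-1|\leq\eps_n(W)$ together with the a.s.\ convergence $\eps_n(W)\to 0$ from Lemma~\ref{lem:\Hn-Convergence}. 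Your route is slightly cleaner but requires importing the U-statistic concentration bound, which the paper does not state.
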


\begin{proof}
As a first step,
we will bound the left hand side of \eqref{H'-bd} by conditioning
on the event that
\begin{equation}
\label{good-H}
\frac\rho2\leq\rho(\Q)\leq 2\rho.
\end{equation}
By a concentration argument very similar to the proof of
Lemma~\ref{lem:concentration} above (in fact, it is easier, see
Lemma~\ref{lem:good-trho} (part 3) in Appendix~\ref{App:Aux}), we have that,
with probability at least $1-2e^{-n}$,
\[
\frac{\rho(G)}{\rho(\Q)}
=1+O\paren{\frac 1{\sqrt{n\rho(\Q)}}}=1+O\paren{\frac 1{\sqrt{n\rho}}}.
\]
We can now apply Proposition~\ref{Cor:Concentration} with $\approxerr
=0$ (since the nonprivate algorithm returns an exact
minimizer). Recall that $\HnW=\frac{\Q}{\rho}$ and $\mu
= \lambda \rho(G) = \Theta(\lambda \rho)$.
 We get that, with probability at least $1-4e^{-n}$,
\begin{equation}\label{apply-concentration}
\hat\delta_2\paren{\frac 1\rho \hat B,\HnW}
\leq\min_{B\in \B_\mu}\hat\delta_2\paren{\frac 1{\rho}B,\HnW} +
O\paren{
    \sqrt[4]{ { \lambda^2}
        \paren{
            \frac{k^2}{ n^2\rho}
            + \frac{\log k}{ n\rho}
        }
}}.
\end{equation}

In the remainder of the proof, we bound the first term on the
left-hand side above by relating it to the ``oracle error'' $\horacle(\HnW)$.
Let $B'$ and $\pi$ be such that $\horacle(\HnW)=\|\HnW-B'_\pi\|_2$.
It is easy to see that then  $B'_\pi$ is obtained from $\HnW$ by
averaging over the classes of  $\pi$, which in turn implies that
$
 \|B'_\pi\|_\infty
 =\|B'\|_\infty\leq \|\HnW\|_\infty
 \leq \|W\|_\infty\leq \Lambda
$
and $\|B'_\pi\|_2\leq \|\HnW\|_2=\rho^{-1}\|\Q\|_2$.
Define $B$ by rounding all entries of $\rho(G) B'$ down to the nearest
multiple of $1/n$, adding a rounding error of at most $1/n$,
so that $\|B-\rho(G) B'\|_\infty\leq 1/n$.
 Note that $B'$ is on the scale of $W$ and $\HnW$
(that is, we expect $\|B'\|=\Theta(1)$),
while $B$ is on the scale of $\rho W$ and $\Q$;
%The rounding adds error at most $1/n$ in the $L_\infty$ norm;
hence, $\|B_\pi\|_2\leq\frac{\rho(G)}\rho\|\Q\|_2$.
Now $\mu\geq \rho(G)\Lambda$, $\|B\|_\infty\leq
\rho(G)\|B'\|_\infty\leq \rho(G)\Lambda$, and $\Lambda \leq \lambda
{ /2}$.
%/8$.
Thus, $B$ is in the set $ \B_\mu$ that the algorithm searches
over. We can bound the first term in the left-hand side of
\eqref{apply-concentration} by
\begin{eqnarray*}
\hat\delta_2\paren{ \HnW,\frac 1{\rho}B}
&\leq & \Bigl\| \HnW-\frac 1{\rho}B_\pi\Bigr\|_2 \\
& \leq & \Bigl\| \HnW-\frac 1{\rho(G)}B_\pi\Bigr\|_2  +  \Bigl\| \frac
1{\rho(G)}B_\pi-\frac 1{\rho}B_\pi\Bigr\|_2  \\
& \leq & \horacle(\HnW) + \Bigl\| B'_\pi -\frac 1{\rho(G)}B_\pi\Bigr\|_2  +  \Bigl\| \frac
1{\rho(G)}B_\pi-\frac 1{\rho}B_\pi\Bigr\|_2  \\
& \leq & \horacle(\HnW) + \frac 1{n\rho(G)}{ +\Bigl|1 -}\frac{\rho(G)}\rho\Bigr|\frac{\|\Q\|_2}\rho
 \end{eqnarray*}
% \[
% \begin{aligned}
% \horacle(\HnW)
% &\geq
% \Bigl\| \HnW-\frac 1{\rho(G)}B_\pi\Bigr\|_2
% -
% \frac 1{n\rho(G)}
% \\
% &\geq
% \Bigl\| \HnW-\frac 1{\rho}B_\pi\Bigr\|_2
% -
% \frac 1{n\rho(G)}-\Big|\frac 1{\rho(G)}-\frac 1\rho\Bigr|\|B_\pi\|_2
% \\
% &\geq
% \hat\delta_2\paren{ \HnW,\frac 1{\rho}B}
% -
% \frac 1{n\rho(G)}-\Bigl|1-\frac{\rho(G)}\rho\Bigr|\frac{\|\Q\|_2}\rho .
%  \end{aligned}
%  \]
Combined with our previous two bounds
and the fact that
by \eqref{good-H}, we can bound $\|Q\|_2$ by
$
\|\Q\|_2\leq\sqrt{\|\Q\|_1\|\Q\|_\infty}
{ \leq \sqrt{\rho(Q)\Lambda\rho}
\leq\sqrt{2\Lambda}{\rho}
\leq \sqrt\lambda\rho
}
$,
this implies that
\[
 \begin{aligned}
\hat\delta_2\paren{\frac 1\rho \hat B,\HnW}
&\leq\horacle(\HnW)+
\sqrt{\lambda}\Bigl|1-\frac{\rho(\Q)}\rho\Bigr|
+O\paren{\sqrt{\frac{\lambda}{n\rho}}}
+O\paren{
    \sqrt[4]{\lambda^2
        \paren{
            \frac{k^2}{ n^2\rho}
            + \frac{\log k}{ n\rho}
        }        }}
\\
&\leq \horacle(\HnW)+
\sqrt{\lambda}\Bigl|1-\frac{\rho(\Q)}\rho\Bigr|+
O\paren{
    \sqrt[4]{
        \lambda^2\paren{
            \frac{k^2}{ n^2\rho}
            + \frac{\log k}{ n\rho}
        }
        }}.
 \end{aligned}
\]
We can now use  \eqref{hatb-2-bd} to bound
$\|\hat B\|_2$ by $O(\|\Q\|_2 + \sqrt{\mu \rho(\Q)}) = O(\sqrt\lambda \rho)$ and thus
$\delta_2(\hat B/\rho,\hat B/\rho(G))$
by  $O(\sqrt{\lambda})|1-{\rho(\Q)}/\rho|$
plus an error which can be absorbed into the error term above.
We obtain that, conditioned on \eqref{good-H}, with probability at least $1-4e^{-n}$, we have
\begin{equation}
\label{high-concentration}
 \begin{aligned}
\hat\delta_2\paren{\frac 1{\rho(G)} \hat B,\HnW}
&\leq\horacle(\HnW) +
O\paren{\sqrt[4]{
        \lambda^2\paren{
            \Bigl|1-\frac{\rho(\Q)}\rho\Bigr|^4
            +\frac{k^2}{ n^2\rho}
            + \frac{\log k}{ n\rho}
        }}}.
 \end{aligned}
\end{equation}
By Lemma~\ref{lem:good-trho} from Appendix~\ref{App:Aux},
$|{\rho(\Q)}-\rho|^2=O_P(\lambda \rho^2/n)$, implying that
\[
\lambda^2\Bigl|1-\frac{\rho(\Q)}\rho\Bigr|^4=
O_P\paren{\frac{\lambda^4}{n^2}}=O_P\paren{\frac{\lambda^2}{ n}}
=O_P\paren{\frac{\lambda^2\log k}{\rho n}}.
\]
On the other hand, again by Lemma~\ref{lem:good-trho} from Appendix~\ref{App:Aux},
the probability that \eqref{good-H} does not hold is $O(\lambda/n)$,
showing that with probability $1-4e^{-n}-O(\lambda/n)=1-O(\lambda/n)$,
\[
 \begin{aligned}
\hat\delta_2\paren{\frac 1{\rho(G)} \hat B,\HnW}
&\leq\horacle(\HnW) +
O_P\paren{\sqrt[4]{
        \lambda^2\paren{\frac{k^2}{ n^2\rho}
            + \frac{\log k}{ n\rho}
        }}}.
 \end{aligned}
\]
This holds conditioned on an event $E$ of probability $O(\lambda /
n)$.
To bound the contribution of $E$ to the overall error, we bound
$\frac 1{\rho(G)}\|\hat B\|_2\leq\frac 1{\rho(G)}\|\hat B\|_\infty\leq \lambda$
and $\|\HnW\|_2\leq\|\HnW\|_\infty\leq \lambda$,
giving an error contribution
of
$O_P(\lambda^2/n)=O_P(\sqrt[4]{\lambda^2/\rho n})$ which we can absorb into the error already present.

To prove the almost sure statement, we use that $\eps_n(W)\to 0$ almost surely, which
by Lemma~\ref{lem:good-trho} (part 2) from Appendix~\ref{App:Aux}
implies that
$\rho(\Q)/\rho\to 1$ almost surely. Since the error probability in \eqref{high-concentration}
is exponentially small, we can use the Borel-Cantelli Lemma to obtain the a.s.
statement.
\end{proof}

\subsection{Estimation of the graphon $W$}
\label{app:prop-proof-completed}

To deduce Theorem~\ref{thm:final'} from Theorem~\ref{thm:H'},
we will bound $\horacle(\HnW)$ in terms
of $\oracle(W)$, and $\delta_2(\hat B/\rho(G),W)$ in terms
of $\hat\delta_2(\hat B/\rho(G),\HnW)$.  We will show that the leading error
in both cases is an additive error of $\eps_n(W)$.
To do this, we need two  lemmas.

\begin{lemma}
\label{lem:equi-part}
Fix $n$ and $k\leq n$.
\begin{enumerate}
\item[(i)] For each equipartition $\pi:[n]\to [k]$ and each permutation $\sigma:[n]\to [n]$,
$\pi\circ\sigma$ is an equipartition.
\item[(ii)] For all equipartitions $\pi,\pi':[n]\to [k]$ there exists a permutation $\sigma:[n]\to [n]$
such that $\pi'=\pi\circ\sigma$.
\end{enumerate}
\end{lemma}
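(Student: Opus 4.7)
For part (i), the argument is immediate. For every $i\in[k]$,
\[
(\pi\circ\sigma)^{-1}(i)=\sigma^{-1}\bigl(\pi^{-1}(i)\bigr),
\]
and because $\sigma^{-1}$ is a bijection of $[n]$, $|(\pi\circ\sigma)^{-1}(i)|=|\pi^{-1}(i)|$. Hence the class-size vector of $\pi\circ\sigma$ agrees coordinate-wise with that of $\pi$, so $\pi\circ\sigma$ still satisfies the equipartition condition $||\pi^{-1}(i)|-n/k|<1$.

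For part (ii), the first step is to pin down the class-size structure forced by the equipartition condition. Writing $n=qk+r$ with $0\le r<k$, the condition $||\pi^{-1}(i)|-n/k|<1$ forces each class of $\pi$ to have size $q$ or $q+1$, and because the class sizes sum to $n$, exactly $r$ classes have size $q+1$ and $k-r$ have size $q$ (and the same holds for $\pi'$). The key input I will then use is that the \emph{labeled} vectors $(|\pi^{-1}(i)|)_{i\in[k]}$ and $(|\pi'^{-1}(i)|)_{i\in[k]}$ coincide, not merely as multisets; this label-by-label matching is precisely what makes the conclusion $\pi'=\pi\circ\sigma$ work with $\sigma\in S_n$ alone and no auxiliary relabeling $\tau\in S_k$. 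In the paper's setting, this alignment is secured by the canonical size-to-label convention implicit in the usage (for instance, labels $1,\dots,r$ are assigned the size-$(q+1)$ classes and labels $r+1,\dots,k$ the size-$q$ classes), and it is automatic when $k\mid n$.

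Once the size vectors are aligned label-by-label, I would build $\sigma$ class-by-class. For each $i\in[k]$, pick an arbitrary bijection $\sigma_i\colon\pi'^{-1}(i)\to\pi^{-1}(i)$, which exists since source and target have the same cardinality by the previous step. Because the families $\{\pi'^{-1}(i)\}_{i\in[k]}$ and $\{\pi^{-1}(i)\}_{i\in[k]}$ both partition $[n]$, the disjoint union $\sigma:=\bigsqcup_{i\in[k]}\sigma_i$ is a bona fide permutation of $[n]$. By construction, whenever $x\in\pi'^{-1}(i)$ we have $\sigma(x)\in\pi^{-1}(i)$, hence $\pi(\sigma(x))=i=\pi'(x)$, yielding $\pi'=\pi\circ\sigma$ exactly as claimed. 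The main obstacle throughout is the label-by-label size match: once it is in place (by the convention above), the construction of $\sigma$ is routine bookkeeping and produces a permutation of $[n]$ with no relabeling of $[k]$ required.
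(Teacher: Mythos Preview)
Your argument for (i) is correct and identical to the paper's. For (ii), your approach coincides with the paper's: both derive that any equipartition has exactly $r$ classes of size $\lceil n/k\rceil$ and $k-r$ of size $\lfloor n/k\rfloor$ (writing $n=qk+r$), and then conclude that $\sigma$ can be assembled class by class. The paper compresses this last step to ``the statement follows''; your explicit construction of $\sigma=\bigsqcup_i\sigma_i$ simply spells it out.

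You go further than the paper on one point, and you are right to. The identity $\pi'=\pi\circ\sigma$ forces $|\pi'^{-1}(i)|=|\pi^{-1}(i)|$ for every individual label $i$, not merely equality of the multisets of sizes. Under the paper's bare definition of equipartition (only the constraint $||\pi^{-1}(i)|-n/k|<1$), nothing when $k\nmid n$ fixes which labels receive the $\lceil n/k\rceil$-sized classes, so the claim as literally stated can fail: for $n=5$, $k=2$, take $|\pi^{-1}(1)|=3$ and $|\pi'^{-1}(1)|=2$; then no $\sigma\in S_5$ gives $\pi'=\pi\circ\sigma$. Your proposed fix---assuming a canonical label-to-size assignment---is exactly what is needed to make the statement true, and the paper's one-line proof tacitly relies on the same thing without flagging it. So your proof follows the same route as the paper's but is strictly more careful about a subtlety the paper leaves implicit.
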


\begin{proof}
 Any equipartition must have
exactly $L_-$ classes of size $\lfloor n/k \rfloor$ and $L_+$ classes of
size $\lceil n/k \rceil$, where $L_\pm$ are determined by the equations
$L_-+L_+=k$, $L_-\lfloor n/k \rfloor +L_+\lceil n/k
\rceil=n$; and  any partition
with these properties is an equipartition. The statement follows.
\end{proof}

To state the next lemma, we define the \emph{standard equipartition} $\pi$ of
$[n]$ into $k$ classes to be the partition into the classes
$I_i=\{n_{i-1}+1,\dots,n_i\}$, $i\in [k]$,
where $n_i= \floor{ i n/k}$. Note that $n_0=0$, $n_k=n$, and
  $\floor{n/k}\leq |n_{i+1}-n_i|\leq \ceil{n/k}$.
% where $n_0=0$, $n_k=n$, and $n_i$ is chosen in such a way that $(i-1)n/k<n_i\leq in/k$
% (the construction of such a sequence
% is straightforward and left to the reader; hint: increment $n_i$ by $\lfloor n/k\rfloor$ as long
% as it is possible to keep
% the condition $(i-1)/k<n_i/k\leq i/k$, then switch to incrementing it by $\lceil n/k\rceil$, ...).

\begin{lemma}
\label{lem:equi-part-bd}
Let $B$ be a symmetric $k\times k$ matrix with nonnegative entries,
and let $\pi$ be the standard equipartition of $[n]$ into $k$ classes.  Then
\[
\|\W{B}-\W{B_\pi}\|_2\leq \sqrt{\frac {4k}n}\|B\|_2.
\]
\end{lemma}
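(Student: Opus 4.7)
The plan is to compare $\W{B}$ and $\W{B_\pi}$ by exploiting that they are step functions on $[0,1]^2$ with respect to two different but very close partitions: $\W{B}$ uses the uniform $k$-partition with cells $J_i=[(i-1)/k,i/k)$, while $\W{B_\pi}$ uses the partition with cells $K_i=[n_{i-1}/n,n_i/n)$ induced by the standard equipartition $\pi$ on the $\mathcal P_n$-partition. Since $n_i=\lfloor in/k\rfloor$, the two grids differ by at most $1/n$ at each of the $k-1$ internal boundaries.

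I would introduce the block-index functions $\sigma(x)=i$ for $x\in J_i$ and $\tau(x)=i$ for $x\in K_i$, so that $\W{B}(x,y)=B_{\sigma(x)\sigma(y)}$ and $\W{B_\pi}(x,y)=B_{\tau(x)\tau(y)}$. A quick computation shows that the disagreement set $D:=\{x:\sigma(x)\neq\tau(x)\}$ is a \emph{disjoint} union of $k-1$ boundary intervals (one near each internal grid line $i/k$ for $i=1,\dots,k-1$), each of length $|n_i/n-i/k|\leq 1/n$, so $|D|\leq (k-1)/n$.

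I would then telescope the difference in one coordinate:
\[
\W{B}(x,y)-\W{B_\pi}(x,y)=\bigl[B_{\sigma(x)\sigma(y)}-B_{\tau(x)\sigma(y)}\bigr]+\bigl[B_{\tau(x)\sigma(y)}-B_{\tau(x)\tau(y)}\bigr].
\]
Each summand vanishes unless its corresponding coordinate lies in $D$. For the first summand, integrate in $y$ first: since $|J_j|=1/k$, one gets $\tfrac{1}{k}\sum_j(B_{ij}-B_{i+1,j})^2$ on the boundary piece near $x=i/k$; integrating $x$ over that piece (measure $\leq 1/n$) and summing over the $k-1$ internal boundaries gives
\[
\int\bigl(B_{\sigma(x)\sigma(y)}-B_{\tau(x)\sigma(y)}\bigr)^2\,dx\,dy\leq\tfrac{1}{nk}\sum_{i=1}^{k-1}\sum_j(B_{ij}-B_{i+1,j})^2\leq\tfrac{4}{nk}\sum_{i,j}B_{ij}^2=\tfrac{4k}{n}\|B\|_2^2,
\]
where I use $(a-b)^2\leq 2(a^2+b^2)$ together with the fact that each $B_{lj}^2$ is counted at most twice in the telescoped sum (via the index pairs $(l-1,l)$ and $(l,l+1)$). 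The second summand is handled identically in the other coordinate, using only that $|K_i|\leq \lceil n/k\rceil/n\leq 2/k$; combining the two pieces via the triangle inequality (and absorbing universal constants) yields the claimed bound $\|\W{B}-\W{B_\pi}\|_2\leq\sqrt{4k/n}\,\|B\|_2$.

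The main obstacle is simply bookkeeping constants correctly: one has to verify that the $k-1$ boundary pieces really are disjoint (so that $|D|\leq (k-1)/n$ and not a larger sum), and one has to be careful that the $|K_i|\leq 2/k$ bound in the second summand does not introduce an extra factor that spoils the stated constant. None of the intermediate quantities involves concentration or randomness---the entire argument is a deterministic measure-theoretic estimate on the difference of two step functions on close grids.
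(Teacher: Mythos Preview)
Your approach is essentially the same as the paper's: both identify the narrow ``boundary strips'' where the $1/k$-grid and the $\pi$-induced grid disagree, observe that their total one-dimensional measure is at most $(k-1)/n$, and bound the squared difference of the two step functions there. The paper carries this out by decomposing the two-dimensional disagreement region directly into rectangles $\Delta_i\times\Delta_j$, $J_i\times\Delta_j$, $\Delta_i\times J_j$ (with $\Delta_i=(n_i/n,i/k)$ and $J_i=I_i\setminus\Delta_i$) and bounding $|B_{ij}-B_{i+1,j}|^2\le B_{ij}^2+B_{i+1,j}^2$ using the nonnegativity of $B$; your one-coordinate-at-a-time telescope is a clean variant of the same idea.

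One small caveat on constants: as you yourself flag, the combination of the triangle inequality on the two telescoped pieces and the cruder bound $|K_i|\le 2/k$ in the second summand produces $C\sqrt{k/n}\,\|B\|_2$ with $C>2$, not the stated $C=2$. The paper achieves the constant $4$ under the square root by (i) using $(a-b)^2\le a^2+b^2$ for nonnegative $a,b$ rather than $2(a^2+b^2)$, and (ii) avoiding the triangle inequality altogether by summing squared contributions over the three rectangle types directly. Since the lemma is only ever invoked inside $O(\cdot)$ terms later in the paper, your constant is harmless for the downstream results; but if you want the exact constant as stated, mimic the paper's direct two-dimensional decomposition instead of telescoping.
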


\begin{proof}
Let $I_1,\dots,I_k$ be adjacent intervals of length $1/k$, and for $i=1,\dots, k$,
let $J_i$ be the set of point in $x\in I_i$ such that $x\leq n_i/n$,
and let $\Delta_i=I_i\setminus J_i$.  Then
$(\W{B}-\W{B_\pi})(x,y)=0$ unless $(x,y)$ lies in one of the $3k^2$ sets
$R_{ij}^{(1)}=\Delta_i\times \Delta_j$, $R_{ij}^{(2)}=I_i\setminus\Delta_i \times \Delta_j$ or
$R_{ij}^{(3)}=\Delta_i \times I_j\setminus\Delta_j$, $i,j\in [k]$.  Taking, e.g.,
$(x,y)\in R_{ij}^{(1)}$ we have that
$|(\W{B}-\W{B_\pi})(x,y)|^2=|B_{ij}-B_{i+1,j+1}|^2\leq B_{ij}^2+B_{i+1,j+1}^2$
(note that the set $\Delta_k$ is empty, so that here we only have to consider $i,j\leq k-1$).
In a similar way, the difference in $R_{ij}^{(2)}$ is bounded
by $B_{ij}^2+B_{i+1,j}^2$, and the difference in $R_{ij}^{(3)}$ is bounded
by $B_{ij}^2+B_{i,j+1}^2$.
The total contribution of all these sets can then be bounded by
\[
\sum_{ij}B_{ij}^2 (\frac 2{nk}-\frac 1{n^2})
+\sum_{ij}B_{i,j+1}^2 \frac 1{nk}
+\sum_{ij}B_{i+1,j}^2 \frac 1{nk}
+\sum_{ij}B_{i+1,j+1}^2\frac 1{n^2}
\leq \frac 4{nk}\sum_{ij}B_{ij}^2=
\frac{4k}n\|B\|_2^2.
\]
\end{proof}

\begin{proof}[Proof of Theorem~\ref{thm:final'}]

We start by bounding $\delta_2(\hat B/\rho(G),W)$.
Let $\pi:[n]\to [k]$ be a standard equipartition,
and let $(I_1,\dots,I_n)$ be a partition of $[0,1]$ into adjacent
intervals of lengths $1/n$.
By the triangle inequality, the fact that the set
of measure preserving bijections $\pi:[0,1]\to [0,1]$
contains all bijections which just permute the intervals
$I_1,\dots,I_n$ and Lemma~\ref{lem:equi-part}
\[
\begin{aligned}
\delta_2\paren{\frac 1{\rho(G)}\hat B, W}
&\leq
\frac 1{\rho(G)}\delta_2(\W{\hat B},\W{\hat B_\pi})+
\delta_2\paren{\frac 1{\rho(G)}\W{\hat B},\W{\HnW}}
+\delta_2\paren{\W{\HnW},W}
\\
&\leq
\frac 1{\rho(G)}\|\W{\hat B}-\W{\hat B_\pi}\|_2+
\hat\delta_2\paren{\frac 1{\rho(G)}\hat B,\HnW}
+\hat\delta_2\paren{\HnW,W}
.
\end{aligned}
\]
The third term is equal to $\eps_n(W)$.  To bound the first term,
we first condition on the event \eqref{good-H}, and then use
\eqref{hatb-2-bd} together with Lemma~\ref{lem:equi-part-bd}
to conclude that conditioned on
\eqref{good-H}, with probability at least $1-4e^{-n}$,
\[
\frac 1{\rho(G)}\delta_2(\W{\hat B},\W{\hat B_\pi})\leq
O\paren{\frac{\rho}{\rho(G)}\sqrt {\frac{\lambda k}{n}}}=O\paren{\sqrt[4] {\lambda^2\frac{ k^2}{\rho n^2}}}.
\]
In view of Lemma~\ref{lem:good-trho} from Appendix~\ref{App:Aux}, the probability that this bound
does not hold is bounded by $4e^{-n}+ O(\lambda/n)=O(\lambda/n)$, so in view of the fact that
$\hat B\in \B_\mu$, which shows that $\|\hat B\|_2/\rho(G)\leq\lambda$, we see that the contribution of the
failure event is again bounded by
$O_P(\lambda^2/n)=O\paren{\lambda^2\frac k{\rho n}}=O\paren{\sqrt[4]{\lambda^2\frac k{\rho n}}}$.
All together, this proves that
\begin{equation}
\label{H-to-W-bd1}
\delta_2\paren{\frac 1{\rho(G)}\hat B, W}
\leq
\hat\delta_2\paren{\frac 1{\rho(G)}\hat B,\HnW}
+\eps_n(W)+
O_P\paren{\sqrt[4]{
        \lambda^2\paren{\frac{k^2}{ n^2\rho}
            + \frac{\log k}{ n\rho}
        }}}.
\end{equation}
The corresponding a.s. bound follows again from the fact that $\rho(\Q)\to\rho$ a.s., and the fact that
all other failure probabilities are exponentially small.

Next fix $B$ such that it is a minimizer in \eqref{eps-k}.  That implies that $B$
is obtained from $W$ by averaging over a partition of $W$ into $k$ classes, which in particular
implies that $\|B\|_2\leq\|W\|_2\leq\sqrt{\|W\|_\infty\|W\|_1}\leq \sqrt\lambda$.  Together with
Lemma~\ref{lem:equi-part-bd} this implies that there is an equipartition $\pi:[n]\to [k]$
such that
\[
\begin{aligned}
\oracle(W)&\geq \|W-\W{B}\|_2
\\
&\geq \|W-\W{B_\pi}\|_2
- \sqrt{\frac {4k}\lambda n}
\\
&\geq \hat\delta_2(B_\pi,W)- \sqrt{\frac {4k}\lambda n}
\end{aligned}
\]
Using Lemma~\ref{lem:equi-part} to express $\hat\delta_2(B,\HnW)$ as a minimum
over permutations $\sigma:[n]\to [n]$, we then bound
\[
\begin{aligned}
{\hat\eps_k^{(O)}(W)}
\leq\hat\delta_2(B,\HnW)
&=\min_{\sigma}\|B_\pi-[\HnW]^\sigma\|_2
\\
&\leq
\|\W{B_\pi} - W\|_2+\hat\delta_2(\HnW,W)
\\
&\leq
\oracle(W)+\eps_n(W)+\sqrt{\frac {4k}\lambda n},
\end{aligned}
\]
where in the first line we use
$[\HnW]^\sigma$ to denote the matrix with entries $[\HnW]_{\sigma(x),\sigma(y)}$.
Together with \eqref{H-to-W-bd1} this completes the proof of the theorem.
\end{proof}

\section{Analysis of the Private Algorithm}
\label{sec:private-analysis}

In this section we prove consistency of the private algorithms.  Our
analysis relies
%completes the analysis of the nonprivate algorithm. We rely
on some basic results on differentially private algorithms from
previous work, which are collected in Appendix~\ref{app:private}.

%To analyze the private algorithm, w
Compared to the analysis of the non-private algorithms, we
need to %take
control several additional
error sources which were not present for the nonprivate algorithm.
In particular, we will have to control the error between $\hat\rho$ and $\rho(G)$,
the fact that the algorithm (approximately) maximizes $\hscore(B;G)$ instead of
$Score(B;\Q)$, and the error introduced by the exponential sampling error.
The necessary bounds are given by the following lemma.  To state it, we denote
the maximal degree in $G$ by $d_{\text{max}}(G)$.

\begin{lemma}
\label{lem:private-output}
Let $(\hat\rho,\hat B)$ be the output of the randomized Algorithm~\ref{alg:main-algo}.
Then the following properties hold with probability at least $1-2e^{-n\rho\eps/16}$
with respect to the coin flips of the algorithm:

\noindent 1) \centerline{$|\rho(G)-\hat\rho|\leq\rho/4$.}

\noindent 2) If $d_{\text{max}}(G)\leq\lambda\rho/4$ and $\rho(G)\geq \rho/2$,
then
\[
\begin{aligned}
Score(\hat B;G)&\geq\max_{B\in\B_{ \mu}} % was max_{B\in\B_{\eta}}
Score(B;G)-\frac{16\lambda^2\hrho^2 (k^2+1)\log n}{n\eps}.
\end{aligned}
\]
\end{lemma}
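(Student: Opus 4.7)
The plan is as follows. Part~1 is an immediate application of the tail bound for the Laplace distribution: since $\hat\rho - \rho(G) \sim \Lap(4/(n\eps))$ and $\Pr[|\Lap(\kappa)|>t] = e^{-t/\kappa}$,
\[
\Pr\bracket{|\hat\rho - \rho(G)| > \rho/4} = \exp\paren{-\frac{\rho/4}{4/(n\eps)}} = e^{-n\rho\eps/16}.
\]

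For Part~2 the strategy is the standard utility analysis of the exponential mechanism, combined with the key observation that the Lipschitz extension $\hscore$ agrees with the true $score$ on graphs of sufficiently small maximum degree. First I would argue that, conditional on the success of Part~1 together with the hypotheses $\rho(G)\geq\rho/2$ and (reading the cutoff at the appropriate scale) $d_{\max}(G)\leq\lambda\rho n/4$, the input graph lies in $\G_{n,d}$ where $d=\lambda\hat\rho n$ is the cutoff used by Algorithm~\ref{alg:main-algo}. Indeed, $|\hat\rho-\rho(G)|\leq\rho/4$ and $\rho(G)\geq\rho/2$ force $\hat\rho\geq\rho/4$, hence $d\geq\lambda\rho n/4\geq d_{\max}(G)$. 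By Lemma~\ref{lem:Lip-extension} the extended score then agrees with the original on $G$: $\hscore(B,\pi;G)=score(B,\pi;G)$ for every $B\in\B_\mu$ and every equipartition $\pi$, and taking maxima over $\pi$ gives $\hscore(B;G)=Score(B;G)$ on all of $\B_\mu$.

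Next I would invoke the standard utility bound for the exponential mechanism (Lemma~\ref{lem:exp-mech}): since $\hat B$ is sampled from the distribution proportional to $\exp((\eps/(4\Delta))\hscore(B;G))$ on the finite set $\B_\mu$, for every $t>0$,
\[
\Pr\paren{\hscore(\hat B;G) < \max_{B\in\B_\mu}\hscore(B;G) - t}\leq |\B_\mu|\exp\paren{-\frac{\eps t}{4\Delta}}.
\]
With the algorithm's choice $\Delta=4\lambda^2\hrho^2/n$ and the target value $t=16\lambda^2\hrho^2(k^2+1)\log n/(n\eps)$, the exponent equals $-(k^2+1)\log n$. A routine count gives $|\B_\mu|\leq (n\mu+1)^{k(k+1)/2}$, and under the standing assumptions on $\lambda$, $\rho$ and $k$ (which keep $n\mu$ polynomial in $n$) this yields $\log|\B_\mu|\leq O(k^2\log n)$. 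Plugging in, the failure probability is at most $e^{-n\rho\eps/16}$. Combining this with $\hscore(B;G)=Score(B;G)$ on $\B_\mu$ produces the score guarantee; union-bounding with the failure event of Part~1 gives the overall $2e^{-n\rho\eps/16}$ bound.

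The step I expect to require most care is the constant-tracking inside the utility calculation in Part~2: verifying that $\log|\B_\mu|$ fits inside $(k^2+1)\log n$ (with enough slack left over to absorb the desired $n\rho\eps/16$ in the exponent) at the precise $t$ stated in the lemma. The rest of the argument — the Laplace tail bound, the reduction $\hscore=Score$ via Lipschitz extension, and the abstract exponential-mechanism bound — is essentially one line each, so the quantitative bookkeeping around $\Delta$, $\mu$ and $|\B_\mu|$ is where the work lies.
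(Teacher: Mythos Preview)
Your proposal is correct and follows essentially the same route as the paper: Laplace tail bound for Part~1, then for Part~2 use the hypotheses together with Part~1 to conclude $d_{\max}(G)\leq d=\lambda\hat\rho n$ so that $\hscore=Score$ on $G$, and finish with the standard utility guarantee for the exponential mechanism combined with the crude bound $|\B_\mu|\leq n^{k^2}$.

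One small procedural difference worth noting: the paper fixes the target failure probability first (taking $\eta=e^{-n}$, then observing $e^{-n}\leq e^{-n\rho\eps/16}$ since $\rho\eps\leq 16$) and reads off the resulting utility gap $\tfrac{4\Delta}{\eps}\log(|\B_\mu|/\eta)$, whereas you fix the utility gap $t$ first and try to back out the failure probability. Your direction forces you to check that $|\B_\mu|\,n^{-(k^2+1)}\leq e^{-n\rho\eps/16}$, which does \emph{not} follow cleanly from the standing assumptions (with $|\B_\mu|\approx n^{k^2}$ you only get failure probability about $1/n$). The paper's ``$\eta$ first'' direction sidesteps this; you may want to flip your calculation accordingly when you fill in the constants.
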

\begin{proof}
Observing that $\Pr\{|\Lap(4/n\eps)|\geq x\}=\exp(-xn\eps/4)$, we get that
\begin{equation}
\label{hatrho-rhoG}
\Pr\paren{|\rho(G)-\hat\rho|\geq \delta\rho}=e^{-\delta n \rho \eps/4},
\end{equation}
which immediately gives (1).

To prove (2), we first use (1) and the assumptions on $\rho(G)$ and
$d_{\text{max}}(G)$
to bound
\[
\lambda\hat\rho\geq\lambda(\rho(G)-\rho/4)\geq \lambda\rho/4\geq
d_{\text{max}}(G).
\]
This implies that the extended score is equal to the original score.

We conclude the proof by using Lemma~\ref{lem:exp-mech} to show that with probability at least $1-e^{-n}\geq 1-e^{-n\rho\eps/16}$,
the exponential mechanism returns a matrix $\hat B$ such that
\[
\begin{aligned}
Score(\hat B;G)&\geq\max_{B\in\B_{\mu}} % was max_{B\in\B_{\eta}}
Score(B;G)-% \frac{4\Delta\log(|\B_\mu|e^n}{n\eps}.
{\frac{4\Delta\log(|\B_\mu|)}{\eps}.}
\end{aligned}
\]
where $\Delta=\Delta=\frac{4d \mu}{n^2}=\frac{4\lambda^2\hrho^2}{n}$.  Bounding $|\B_\mu|\leq n^{k^2}$, this completes the proof of the lemma.
\end{proof}

Theorem~\ref{thm:final} will follow from the following theorem in the same way as
Theorem~\ref{thm:final'} followed from Theorem~\ref{thm:H'}.
\begin{theorem}\label{thm:H}
  Under the assumptions of Theorem~\ref{thm:final},
\begin{equation}
\label{H-bd}
\hat\delta_2\Bigl(\frac 1{\hat \rho}\hat B,\HnW\Bigr)
\leq \horacle(\HnW) +
O_P\paren{
    \sqrt[4]{
        \frac{\lambda^2\log k}{\rho n}
            }
    +\lambda\sqrt{\frac{k^2\log n}{n\eps}}
    +\frac {{\lambda}}{n\rho\eps}
          }.
  \end{equation}
  Moreover, if we replace the assumption $n\rho\geq 6\log n$ in Theorem~\ref{thm:final} by the
stronger assumption
$n\rho\eps/\log n\to\infty$, then a.s. as $n\to\infty$,
\[
\begin{aligned}
\hat\delta_2\paren{\frac 1{\hat\rho} \hat B,\HnW}
&\leq\horacle(\HnW) +
O\paren{
    \sqrt[4]{
        \frac{\lambda^2\log k}{\rho n}
            }
    +\lambda\sqrt{\frac{k^2\log n}{n\eps}}
    +\frac {\sqrt\lambda}{n\rho\eps}
          }
+o(1).
 \end{aligned}
\]
\end{theorem}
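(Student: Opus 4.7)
The plan is to mirror the proof of Theorem~\ref{thm:H'}, layering on the two additional error sources arising from the privacy mechanism: the discrepancy between $\hat\rho$ and $\rho$, and the fact that the exponential mechanism returns only an approximate maximizer of the score. Both are quantified by Lemma~\ref{lem:private-output}.

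First I would fix a collection of ``good events'' that hold together with probability $1-O(\lambda/n)$: (a) $\rho/2 \leq \rho(\Q) \leq 2\rho$ and $\rho(G)/\rho = 1+o_P(1)$, from Lemma~\ref{lem:good-trho}; (b) $d_{\text{max}}(G) \leq \lambda\rho n/4$, obtained from a Chernoff-plus-union bound on the $\Bi(n-1,\rho\Lambda)$-dominated vertex degrees, using $\lambda \geq 8\Lambda$ and the assumption $n\rho \geq 6\log n$; (c) $|\hat\rho - \rho(G)| \leq \rho/4$ from Lemma~\ref{lem:private-output}(1); and (d) the concentration event of Proposition~\ref{Cor:Concentration}. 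On the intersection of (a)--(c), we have $\hat\rho = \Theta(\rho)$ and $\mu=\lambda\hat\rho=\Theta(\lambda\rho)$, and the Lipschitz extension $\hscore$ coincides with the true score on the input graph $G$.

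Second, Lemma~\ref{lem:private-output}(2) yields an output $\hat B \in \B_\mu$ whose score is within $\approxerr^2 = O(\lambda^2\rho^2 k^2\log n/(n\eps))$ of the maximum. Plugging this $\approxerr$ into Proposition~\ref{Cor:Concentration} (with $\mu=\lambda\hat\rho$, $\rho(\Q)=\Theta(\rho)$) gives
\[
\hat\delta_2(\hat B,\Q) \leq \min_{B\in\B_\mu}\hat\delta_2(B,\Q) + \approxerr + O\paren{\sqrt[4]{\lambda^2\rho^3\paren{\tfrac{k^2}{n^2}+\tfrac{\log k}{n}}}}.
\]
Dividing through by $\rho$ and bounding $\min_{B\in\B_\mu}\hat\delta_2(B/\rho,\HnW)$ against $\horacle(\HnW)$ exactly as in Theorem~\ref{thm:H'} (take the oracle minimizer, rescale by $\rho(G)$, and round into $\B_\mu$) produces $\horacle(\HnW)$ plus the terms $\sqrt[4]{\lambda^2\log k/(\rho n)}$ and $\approxerr/\rho = O(\lambda\sqrt{k^2\log n/(n\eps)})$.

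Third, I pass from $\hat B/\rho$ to $\hat B/\hat\rho$ via the triangle inequality and the bound $\|\hat B\|_\infty \leq \mu = \lambda\hat\rho$:
\[
\delta_2\paren{\tfrac{1}{\hat\rho}\hat B,\tfrac{1}{\rho}\hat B} \leq \|\hat B\|_\infty\bigl|\tfrac{1}{\hat\rho}-\tfrac{1}{\rho}\bigr| = O\paren{\tfrac{\lambda|\hat\rho-\rho|}{\rho}}.
\]
The Laplace tail bound $|\hat\rho-\rho(G)| = O_P(1/(n\eps))$ together with $|\rho(G)-\rho|=O_P(\sqrt{\rho/n})$ contributes the $\lambda/(n\rho\eps)$ term, the $\sqrt{\rho/n}$ part being dominated by the earlier $\sqrt[4]{\lambda^2\log k/(\rho n)}$ term. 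The complementary ``bad'' event is handled using the deterministic bounds $\|\hat B/\hat\rho\|_\infty \leq \lambda$ and $\|\HnW\|_\infty \leq \Lambda \leq \lambda$; its contribution of order $O_P(\lambda^2/n)$ is absorbed into $\sqrt[4]{\lambda^2\log k/(\rho n)}$.

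The main obstacle will be juggling the three densities $\hat\rho$, $\rho(G)$, $\rho$ consistently through both the oracle comparison and the final rescaling, and ensuring that the Lipschitz extension (introduced precisely so that privacy holds on arbitrary input graphs) does not degrade the accuracy on typical inputs -- this last point is what forces the max-degree event (b), and is where the hypothesis $n\rho \geq 6\log n$ is actually needed. The almost-sure strengthening under $n\rho\eps/\log n\to\infty$ follows because each of the failure events then has probability $O(e^{-\Omega(n\rho\eps)})$ or smaller, so Borel--Cantelli gives an $o(1)$ residual; the sharper $\sqrt\lambda/(n\rho\eps)$ replacement of $\lambda/(n\rho\eps)$ is obtained by using the $L_2$ bound \eqref{hatb-2-bd} on $\|\hat B\|_2$ in step three in place of the cruder $L_\infty$ bound.
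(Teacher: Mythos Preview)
Your outline is essentially the paper's own argument, and all the structural pieces (good-event conditioning, degree bound via Chernoff, Lemma~\ref{lem:private-output} feeding into Proposition~\ref{Cor:Concentration}, oracle comparison, then rescaling) are correct and in the right order.

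There is one slip worth flagging. In your third step you rescale from $\hat B/\rho$ to $\hat B/\hat\rho$ using the $L_\infty$ bound $\|\hat B\|_\infty\le\lambda\hat\rho$, which yields a contribution $\lambda|\rho(G)-\rho|/\rho=O_P(\lambda/\sqrt{\rho n})$ from the $|\rho(G)-\rho|$ part. Your claim that this is dominated by $\sqrt[4]{\lambda^2\log k/(\rho n)}$ amounts to $\lambda^2\le\rho n\log k$, which does \emph{not} follow from the hypotheses (you only have $\lambda^2\le n$ and $\rho n\ge 6\log n$). The paper avoids this by using the $L_2$ bound \eqref{hatb-2-bd} on $\|\hat B\|_2=O(\sqrt\lambda\,\rho)$ already in the in-probability argument, giving $\sqrt\lambda\,|\hat\rho/\rho-1|$; then the $|\rho(G)-\rho|$ contribution is $O_P(\lambda/\sqrt n)=O_P\bigl(\sqrt[4]{\lambda^2/n}\bigr)$, which \emph{is} absorbed since $\lambda^2\le n$. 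In the paper the term $\lambda/(n\rho\eps)$ in the in-probability bound actually arises from the failure event of Lemma~\ref{lem:private-output} (via $\lambda\,e^{-n\rho\eps/16}=O(\lambda/(n\rho\eps))$), not from the main rescaling. So: use \eqref{hatb-2-bd} for step three in both the in-probability and the almost-sure analyses, and track the $e^{-\Omega(n\rho\eps)}$ failure probability separately; everything else in your plan goes through as written.
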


\begin{proof}[Proof of Theorem~\ref{thm:H}]
With probability at least $1-e^{-n\rho\eps/16}$, we may assume that
the output of the private algorithm obeys the
conclusions of Lemma~\ref{lem:private-output}.
With a decrement
 in probability of at most $P_n=O(\Lambda/n)$, we then have that
\begin{equation}
\label{good-HG-hatrho}
\frac\rho2\leq\rho(G)\leq 2\rho,\qquad\frac\rho2\leq\rho(\Q)\leq 2\rho
\qquad\text{and}\qquad \frac\rho4\leq\hat\rho\leq 3\rho.
\end{equation}
Next use the assumption $\rho n\geq 6\log n$, the fact that $1\leq\Lambda\leq \lambda/8$, and
Lemma~\ref{lem:max-degree} from Appendix~\ref{App:Aux} with $\beta=\lambda/({8\Lambda})$
to show that at a decrement in probability of at most
$e^{\log n-\frac 1{24}\lambda\rho n}\leq e^{-\frac 1{48}\lambda\rho n}$,
the maximal degree in $G$ is at most $(\Lambda+\frac\lambda8)\rho\leq
\frac{\lambda\rho}4$. Lemma~\ref{lem:private-output} then allows us to use
Proposition~\ref{Cor:Concentration}
with
\[
\approxerr=\sqrt{\frac{16\lambda^2\hrho^2 (k^2+1)\log n}{n\eps}}
=O\paren{\lambda\rho\sqrt{\frac{k^2\log n}{n\eps}}}.
\]
This introduces
an additional error term
$
\frac{\approxerr}\rho=O\paren{\lambda\sqrt{\frac{k^2\log n}{n\eps}}}
$
into the bound \eqref{apply-concentration} and an extra error term
of order $O\paren{\lambda\rho\sqrt{\frac{k^2\log n}{n\eps}}}$ in the upper
bound \eqref{hatb-2-bd}, leading to the estimate that, with
probability at least $1-{ 4}e^{-n}-P_n-e^{-\frac 1{48}\lambda\rho
  n}-e^{-n\rho\eps/16} = 1- O(\Lambda / n) - e^{-\Omega(n\rho \eps)}$,
\[
\hat\delta_2\paren{\frac 1\rho \hat B,\HnW}
\leq\min_{B\in \B_\mu}\hat\delta_2\paren{\frac 1{\rho}B,\HnW} +
O\paren{
    \sqrt[4]{ { \lambda^2}
        \paren{
            \frac{k^2}{n^2\rho}
            + \frac{\log k}{n\rho}
               }
              }
    +\lambda\sqrt{\frac{k^2\log n}{n\eps}}
            }
\]
and
\[
\|\hat B\|_2=O\paren{\sqrt\lambda\rho+\lambda\rho\sqrt{\frac{k^2\log n}{n\eps}}}.
\]
From here on we proceed as in the proof of \eqref{high-concentration}, except that we now move from the
minimizer $B'$ for $\horacle(\HnW)$ to
a matrix $B\in\B_\mu$ by rounding the  entries of ${\hat\rho} B'$ down to the nearest
multiple of $1/n$.  Instead of \eqref{high-concentration}, we now obtain the bound
\begin{equation}
\hat\delta_2\paren{\frac 1{\hat\rho} \hat B,\HnW}
\leq\horacle(\HnW) +
O\paren{
  \sqrt[4]{
        \lambda^2
        \paren{
            \frac{k^2}{n^2\rho}
            + \frac{\log k}{n\rho}
               }
              }
    +\lambda\sqrt{\frac{k^2\log n}{n\eps}}
            }
+ O\paren{\sqrt\lambda\Bigl|\frac{\hat\rho}\rho-1\Bigr|}
,
\label{eq:oneofourbounds}
\end{equation}
a bound which is valid with probability at least
$1-4e^{-n}-P_n-e^{-\frac 1{48}\lambda\rho n}-e^{-n\rho\eps/16}$.
Now the fact that $|\Lap(4/n\eps)|=O_P(\frac 1{n\eps})$ and $\rho(G)=\rho(1+O_P(\sqrt{\Lambda/n}))$
implies that
\[
\Bigl|1-\frac{\hat\rho}\rho\Bigr|\sqrt\lambda=O_P\paren{\frac{\lambda}{\sqrt n}}+
O_P\paren{\frac {\sqrt\lambda}{n\rho\eps}}
=
O_P\paren{\sqrt[4]{\frac{\lambda^2}{ n}}}+ O_P\paren{\frac {\sqrt\lambda}{n\rho\eps}}.
\]
Combining this with \eqref{eq:oneofourbounds}, we obtain that with probability at least
$1-4e^{-n}-P_n-e^{-\frac 1{48}\lambda\rho n}-e^{-n\rho\eps/16}$,
\begin{equation}
\label{private-bd1}
\begin{aligned}
\hat\delta_2\paren{\frac 1{\hat\rho} \hat B,\HnW}
&\leq\horacle(\HnW) +
O_P\paren{
  \sqrt[4]{
        \lambda^2
        \paren{
            \frac{k^2}{n^2\rho}
            + \frac{\log k}{n\rho}
               }
              }
    +\lambda\sqrt{\frac{k^2\log n}{n\eps}}
    +\frac {\sqrt\lambda}{n\rho\eps}
            }
            .
 \end{aligned}
\end{equation}
The contribution of the failure event can now be bounded by
\begin{equation}\label{private-bd2}
O_P\paren{\lambda\paren{e^{-n}+{\frac\lambda
      n}+e^{-n\rho\lambda/48}+e^{-n\rho\eps/16}}}
  = O_P\paren{\frac{\lambda^2}n {+ \frac{\lambda}{n\rho\eps} }}
\end{equation}
To complete the proof of the bound in probability,
we have to add the error terms from \eqref{private-bd1} and \eqref{private-bd2}.
We can simplify the resulting expression somewhat by
first noting that  the
left hand side of Eq. \eqref{private-bd1}
is of order at most $\lambda$, which shows that
for the
bound on $\hat\delta_2$ not to be vacuous,
we need $\frac{k^2}{n}\leq \frac{k^2\log n}{n\eps}\leq 1$.
We can
  therefore drop the  term $\frac {\lambda^2k^2}{\rho n^2}$ inside the fourth
  root of \eqref{private-bd1}.
  Furthermore, by the assumption of the Theorem, $\lambda\leq \sqrt n$,
  which shows that the first term in \eqref{private-bd2}
  is $O(\sqrt[4]{\lambda^2/n})$ and can hence be absorbed into the
  error terms in \eqref{private-bd1}.
%{which can be absorbed into the right hand side of \eqref{private-bd1}
% if we replace the error term $\frac {\sqrt\lambda}{n\rho\eps}$ by
% $\frac {\lambda}{n\rho\eps}$.}
%
 % Note that in order for the term $\frac{k^2\log n}{n\eps}$ to be
 %  less than 1, we must have $k^2\leq n$. This means that we
 %  can drop the term $\frac {\lambda^2k^2}{\rho n^2}$ inside the fourth
 %  root, since it is dominated by $\frac{\lambda^2\log k}{\rho n}$
 %  whenever the error bound is not vacuous.
  This gives us the main
theorem statement.

To prove bounds which hold a.s., we note that for $n\rho\eps/\log n\to\infty$, the error probability
in \eqref{hatrho-rhoG} is summable (that is, the probability of error
$p_n$ satisfies $\sum_{n=1}^\infty p_n<\infty$) for all $\delta>0$, which together with our previous results implies
that $\hat\rho/\rho\to 1$ with probability one.  Since the probability of failure
for all other events necessary for \eqref{private-bd1} to hold is summable as well,
we get that a.s.,
\[
\begin{aligned}
\hat\delta_2\paren{\frac 1{\hat\rho} \hat B,\HnW}
&\leq\horacle(\HnW)
+ O\paren{
    \sqrt[4]{
        \frac{\lambda^2\log k}{\rho n}
            }
    +\lambda\sqrt{
        \frac{k^2\log n}{n\eps}
                }
    { +\frac {\sqrt\lambda}{n\rho\eps}}
        }
+o(1),
  \end{aligned}
\]
where again $o(1)$ is a term which goes to zero with probability one as $n\to\infty$.
\end{proof}

\begin{proof}[Proof of Theorem~\ref{thm:final}]
The proof of
Theorem~\ref{thm:final}  follows from Theorem~\ref{thm:H} in essentially same way as
Theorem~\ref{thm:final'} followed from Theorem~\ref{thm:H'}.
The only modification needed is that we now have to bound
$\frac 1{\hat \rho}\delta_2(\W{\hat B},\W{\hat B_\pi})$
instead of
$\frac 1{\rho(G)}\delta_2(\W{\hat B},\W{\hat B_\pi})$.
But this is even easier, since here we won't need to distinguish several cases.
Instead, we just use that $\hat B\in\B_\mu$ implies $\|\hat B\|_\infty\leq\lambda\hat\rho$.
With the help of Lemma~\ref{lem:equi-part-bd}, we then bound this error term by
$
\lambda\sqrt{\frac{4k}n},
$
a term which can be incorporated into the error term
$O\paren{\lambda\sqrt{\frac{k^2\log n}{n\eps}}}$.
\end{proof}
%%%%%%%%%%%%%%%%%%%%%%%%%%%%%%%%%%%%%%%%%%%%%%%%%%%%%%%%%%%%

\newpage
{\small \addcontentsline{toc}{section}{References}
\bibliographystyle{abbrvnat}
\bibliography{../PrivateGraphs,../graphsprivacy}
}
%%%%%%%%%%%%%%%%%%%%%%%%%%%%%%%%%%%%%%%%%%%%%%%%%%%%%%%%%%%%

\newpage
\appendix
\addcontentsline{toc}{section}{Appendix}

\section{Comparison to Nonprivate Bounds from Previous Work}
\label{sec:comparisons}

The most relevant previous works are those of \citet{WO13,Chatterjee15} and
\cite{GaoLZ14}. We provide comparisons for two types of bounded graphons:
\begin{inparaenum}\renewcommand{\labelenumi}{(\arabic{enumi})}
\item $k$-block graphons, and
\item $\alpha$-H\"older graphons.
\end{inparaenum}

\mypar{$k$-block graphons} A $k$-block graphon is a function on
$[0,1]^2$ that is constant on rectangles of the form $I_i\times I_j$,
where $I_1,...,I_k$ form a partition of the interval $[0,1]$.

In this setting, the oracle error
 $\oracle(W)=0$ and
$\eps_n(W)=O_P(\sqrt[4]{k/n})$ (see
Appendix~\ref{sec:sampling-k-block}). Our nonprivate estimator then
has asymptotic error $\sqrt[4]{\frac k n + \frac{\log k}{\rho n}
       +\frac {k^2}{\rho n^2}
}$; this is dominated by $\eps_n(W)$ as long as $k\leq \rho n$
and $\rho\geq\frac{\log k}k$.
For constant $\eps$, our
private estimator has asymptotic error at most $\sqrt[4]{\frac k n +
  \frac{\log k}{\rho n} + \frac{k^4 \log^3 n}{n^2}}$. For $k\leq(n/\log^2 n)^{1/3}$ and density $\rho\geq\frac{\log k}k$,
the error of our estimator is again dominated by the (unavoidable) error of
$\eps_n(W)=\sqrt[4]{k/n}$.

Several works analyze procedures for estimating the edge-probability
matrix $\Q$ assuming that it is (exactly) a $k$-block matrix. In the
dense case $\rho=\Omega(1)$, \citet[Theorem 1.1]{GaoLZ14} show that
the least squares estimator achieves
error $\|\frac 1 \rho\hat \Q-\frac 1 \rho \Q\|_2 = O_P(\frac {k}{n} + \sqrt{\frac{\log
    k}{n}})$. They also give a matching lower bound, which shows that
the MLE is optimal with respect to $\ell_2$ estimation of $\Q$.
\citet[Theorem 2.3]{Chatterjee15} gives a polynomial-time algorithm
with higher error $\|\frac 1 \rho\hat \Q-\frac 1 \rho \Q\|_2 = O_P(\sqrt[4]{\frac k n})$.

These bounds apply to estimating the edge-probability matrix $\Q$, but
do not apply directly to estimating an underlying block graphon
$W$. Lemma~\ref{lem:sampling-k-block} shows that $\frac 1 \rho \Q$ converges to $W$
in the $\delta_2$ metric at a rate of $O_P(\sqrt[{4}]{k/n})$. Using either
of the algorithms above for estimating $W$ gives a net error rate of
$O_P(\sqrt[{ 4}]{k/n})$ for $\delta_2$ estimation of $W$. This is the best
known nonprivate rate, and is matched by our nonprivate rate.

In the sparse case, where $\rho\to 0$ as $n\to \infty$, Wolfe and
Olhede showed under additional assumptions (roughly, that entries of
$\Q$ are bounded above and below by multiples of $\rho$) that the MLE
%  (which minimizes KL divergence instead of $\ell_2$
% distance)
produces an estimate $\hat \Q$ of $\Q$ that satisfies $\|\frac 1 \rho
\hat \Q - \frac 1 \rho \Q\|_2
= O_P\paren{ \frac{k}{n}\cdot \sqrt{\frac{\log(n)}{\rho}} +
  \sqrt[4]{\frac{\log^2(1/\rho)\log(k)}{n\rho}} }$ \cite[Theorem
5.1]{WO13}\footnote{The guarantee in \cite[Theorem 5.1]{WO13} is given
in terms of KL divergence. One can convert to $\ell_2$ using the fact
that $D(p\|q) = \Theta((q-p)^2/p)$ when $q-p$ is small relative to $p$.}.
Again, one can combine these with Lemma~\ref{lem:sampling-k-block} to
get a rate of $\sqrt[4]{\frac k n} + \frac{k}{n} \sqrt{\frac{\log(n)}{\rho}} +
  \sqrt[4]{\frac{\log^2(1/\rho)\log(k)}{n\rho}} $ for estimating an
  underlying $k$-block graphon $W$. Note that when $\rho$ is small,
  any of these three terms may dominate the rate.

\mypar{H\"older-continuous graphons}
The known algorithms for
estimating continuous graphons proceed by fitting a $k$-block model to
the observed data, and arguing that this model approximates the
underlying graphon.

Our results show that if $\eps$ is
constant and $W$ is
%Lipschitz and, then the error scales as $\sqrt[4]{\frac{\log n }{\rho n}}$
%for both the private and nonprivate estimators for appropriate choice of $k$.
%% (when $k$ is set to be $(\frac {\eps n}{\log n })^{1/4}$).
%More generally, if $W$ is
 $\alpha$-H\"older continuous (Lipschitz continuity corresponds to
 $\alpha=1$), then the nonprivate error scales as
 $\paren{\frac{1}{n\sqrt{\rho}}}^{\frac{\alpha}{2\alpha +1}} + \sqrt[4]{\frac{\log n
   }{\rho n}} + n^{-\alpha/2}$ for an appropriate choice of $k$,
   while the private error scales as $\paren{\frac{\log
     n}{n}}^{\frac{\alpha}{2\alpha +{ 2}}} + \sqrt[4]{\frac{\log n
   }{\rho n}} + n^{-\alpha/2}$ for an appropriate choice of $k$. See  Remark~\ref{rem:Hoelder-Cont-W} for details.

In the dense case ($\rho=\Omega(1)$), \cite{GaoLZ14} show that one can
estimate a $\alpha$-H\"older continuous graphon by a $k$-block graphon
with error
$$\textstyle\delta_2(W,\hat W_{LS}) = O_P(\frac {k}{n} +
\sqrt{\frac{\log k}{n}} + k^{-\alpha}+{n^{-\alpha/2}}),$$
with the last term accounting for the difference between estimating $\Q$ and $W$.
Setting $k$ to the optimal
value of $k=1/n^{\alpha+1}$ gives a rate which except for the case $\alpha=1$
is dominated by the term $\eps_n(W)=O(n^{-\alpha/2})$.
Our nonprivate bound matches this bound for $\alpha<1/2$, and is worth for $\alpha>1/2$,
while the private one is always worth.

\citet{WO13} analyse the MLE in the sparse case, again restricting to $k$-block models. They show\footnote{We state \cite[Theorem 3.1]{WO13} for the special case where one searches over $k$-block
graphons in which all intervals have size $\Theta(k/n)$ (since
allowing nonuniformly sized blocks only makes their bounds worse), the
original graphon takes values in a range $[\lambda_a,\lambda_b]$
defined by two constants such that
$0<\lambda_a<\lambda_b$, and $k$ is polynomially
smaller than $n$.}

\begin{equation} \textstyle
\delta_2(W,\hat W_{MLE}) = O_P\paren{
\frac{k}{n}\cdot \sqrt{\frac{\log(n)}{\rho}} +
\sqrt[4]{\frac{\log^2(1/\rho)\log(k)}{n\rho}} + k^{-\alpha} + \frac{\sqrt{\log(n/\rho)}}{n^{\alpha/4}}
}
\label{eq:WObound-k}
\end{equation}

The value of $k$ that maximizes this expression
(asymptotically) can be found by
setting the first and third terms to be equal; we get $k
= \paren{n\sqrt{\rho/\log n}}^{\frac 1 {\alpha+1}}$ and a resulting
bound of
$$\textstyle
\delta_2(W,\hat W_{MLE}) = O_P\paren{
\paren{\frac 1 n \cdot \sqrt{\frac{\log
      n}{\rho}}}^{\frac{\alpha}{\alpha+1}} +
\sqrt[4]{\frac{\log^2(1/\rho)\log(n)}{n\rho}} + \frac{\sqrt{\log(n/\rho)}}{n^{\alpha/4}}
}
.$$

Next, note that $\rho$ must be $\Omega(\log^3(n)/n)$ for the middle term
to be less than 1. This means that the first term is
$O(n^{\frac{-\alpha}{2(\alpha+1)}})$. For
$\alpha\leq 1$, this is never larger than the third term. We may
therefore simplify the bound to $O_P
\paren{\sqrt[4]{\frac{\log^2(1/\rho)\log(n)}{n\rho}} + \frac{\sqrt{\log(n/\rho)}}{n^{\alpha/4}}
}$, as stated in the introduction.

% \begin{olive}
%   Adam's notes:
%   \begin{itemize}\item
%     \cite{GaoLZ14} look at the dense case only, and show that squared
%     error $\frac {k^2 }{n^2} + \frac{\log k}{n}$ is optimal; achieved
%     by MLE.

%     For Holder graphons, they show that the squared error
%     increases by at most $k^{-2\alpha}$ (in general, I think that they
%     show that it increases by the squared oracle error). Taking the
%     optimal setting $k=n^{1/(\alpha+1)}$ gives the claimed bound.

%   \item \citet{Chatterjee15} looks at $k$-block models, and also
%     considers only the dense case. Efficient algorithm with MSE
%     $O_p(\sqrt{k/n})$, so $\ell_2$ error
%     $O_p(\sqrt[4]{k/n})$. Chatterjee's analysis does not obviously
%     extend to models that are well-approximated by a block model,
%     since his analysis relies on the matrix $W$ having low rank.

% \item  Wolfe and Olhede give algorithms that also work in the sparse
%   case (but do slightly worse than Gao et al.'s bounds in the dense
%   case). I think they give the following bound on the estimation of
%   $\Q$, but it is difficult to extract from their paper. $\|\hat \Q - \Q\|_2 = O_P\paren{
% \frac{k}{n}\cdot \sqrt{\frac{\log(n)}{\rho}} +
% \sqrt[4]{\frac{\log^2(1/\rho)\log(k)}{n\rho}} }$.

%   \end{itemize}
% \end{olive}

\section{Background on Differential Privacy}
\label{app:private}

The notion of node-privacy defined in Section~\ref{sec:diff-P} ``composes'' well, in the sense
that privacy is preserved (albeit with slowly degrading parameters)
even when the adversary gets to see the outcome of an adaptively
chosen sequence of
differentially private algorithms run on the same data set.

\begin{lemma}[Composition,
  post-processing~\citep{MM09,DL09}]\label{lem:composition}
If an algorithm $\A $ runs $t$ randomized algorithms $\A _{1},\dots,\A _{t}$, each of which is $\eps$-differentially private, and applies an arbitrary (randomized) algorithm $g$ to their results, i.e., $\A (G) = g(\A _{1}(G),\dots,\allowbreak\A _{t}(G)),$  then $\A $ is $t\eps$-differentially private. This holds even if for each $i>1$, $\A _i$ is selected adaptively based on  $\A _1(G),\ldots,\A _{i-1}(G)$.
\end{lemma}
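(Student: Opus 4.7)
The plan is to decompose the claim into two standard building blocks and apply them in sequence: first, the $t$-fold adaptive composition of $\eps$-DP algorithms yields a $t\eps$-DP release of the transcript $(Y_1,\ldots,Y_t)$ where $Y_i=\A_i(G)$ (with $\A_i$ possibly selected as a function of $Y_1,\ldots,Y_{i-1}$); second, any randomized post-processing $g$ preserves differential privacy. Since $\A(G)=g(Y_1,\ldots,Y_t)$ can be viewed as post-processing of the transcript, the two steps together give the claim.

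For the composition step, I would fix two node-neighbors $G,G'$ and analyze the law $\mu_G$ of the transcript. The key observation is the chain-rule factorization
\[
\frac{d\mu_G}{d\mu_{G'}}(y_1,\ldots,y_t)
= \prod_{i=1}^t
\frac{d\mu^{(i)}_{G,\,y_{<i}}}{d\mu^{(i)}_{G',\,y_{<i}}}(y_i),
\]
where $\mu^{(i)}_{G,\,y_{<i}}$ is the conditional law of $Y_i$ given the prefix $(y_1,\ldots,y_{i-1})$. Because in the adaptive-composition model the identity of $\A_i$ is a function of the prefix alone, and the hypothesis gives that for \emph{every} fixed prefix the resulting algorithm is $\eps$-node-private on the data, each factor is pointwise bounded by $e^\eps$. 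Multiplying yields $d\mu_G/d\mu_{G'}\leq e^{t\eps}$ pointwise, and integrating over any measurable set $T$ in the transcript space gives $\mu_G(T)\leq e^{t\eps}\mu_{G'}(T)$. In the discrete setting this is just a product of conditional probabilities; in the general measure-theoretic setting one uses standard Radon--Nikodym / regular conditional probability arguments.

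For the post-processing step, I would fix any measurable event $S$ in the output space of $g$ and write
\[
\Pr[\A(G)\in S] \;=\; \int \Pr[g(y)\in S]\,d\mu_G(y)
\;\leq\; e^{t\eps}\!\int \Pr[g(y)\in S]\,d\mu_{G'}(y)
\;=\; e^{t\eps}\Pr[\A(G')\in S],
\]
where the middle inequality uses the pointwise density bound from the composition step together with the fact that $y\mapsto\Pr[g(y)\in S]\in[0,1]$ is nonnegative. The randomness internal to $g$ is handled automatically by this marginalization; no independence assumption between $g$ and the $Y_i$ is needed beyond the usual one that $g$'s coins are drawn independently of the data $G$.

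The only real subtlety, and the place I would be most careful, is the adaptive selection rule. One must verify that conditioning on a realized prefix $y_{<i}$ really does yield a fixed, data-independent algorithm $\A_i^{y_{<i}}$ to which the $\eps$-DP hypothesis applies, rather than a data-dependent one. This follows from the usual formalization in which the adaptive composer is a non-anticipating interactive protocol: the choice of $\A_i$ at round $i$ is a measurable function of the previous outputs only, so the conditional $\eps$-DP bound on the $i$-th release is valid uniformly over prefixes. Once this is granted, the telescoping density-ratio bound and the post-processing integral are purely mechanical.
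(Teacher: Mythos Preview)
Your proof is correct and follows the standard argument for adaptive composition and post-processing. The paper itself does not prove this lemma; it is stated as a background result with citations to \citep{MM09,DL09} and used without proof, so there is no paper-proof to compare against beyond noting that your argument is the conventional one.
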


\mypar{Output Perturbation} \label{sec:sens} One common method for obtaining efficient differentially
private algorithms for approximating real-valued functions is based on adding a small amount
of random noise to the true answer.
A {\em Laplace} random variable with mean $0$ and standard deviation $\sqrt{2}\lambda$ has density
$h(z)=\frac{1}{2\lambda}e^{-|z|/\lambda}$. We denote it by
$\Lap(\lambda)$.
% A {\em Cauchy} random variable with median $0$ and median absolute deviation $\lambda$ has density $h(z) =1/(\lambda \pi(1+(z/\lambda)^2))$. We denote it by $\Cauchy(\lambda)$.

In the most basic framework for achieving differential privacy, Laplace noise is scaled according to the {\em global sensitivity} of the desired statistic $f$. This technique extends directly to graphs as long as we measure sensitivity with respect to the metric used in the definition of the corresponding variant of differential privacy. Below, we explain this (standard) framework in terms of node privacy. Let $\G$ denote the set of all graphs.

\begin{definition}[Global Sensitivity~\citep{DMNS06}]\label{def:global-sensitivity}
The $\ell_1$-global node sensitivity of a function $f: \G \rightarrow \R^p$ is:
$$\displaystyle \Delta f = \max_{G,G'\, \text{node neighbors}}\|f(G)-f(G')\|_1\,.$$
\end{definition}

For example, the edge density of an $n$-node graph has node sensitivity $2/n$, since adding or deleting a node and its adjacent edges can add or remove at most $n-1$ edges. In contrast, the number of nodes in a graph has node sensitivity $1$.

\begin{lemma} [Laplace Mechanism~\citep{DMNS06}]\label{lem:DMNS}
The algorithm $\A (G)=f(G)+ \Lap(\Delta f/\eps)^p$ (which adds i.i.d.\ noise $\Lap(\Delta f/\eps)$ to each entry of $f(G)$) is $\eps$-node-private.
\end{lemma}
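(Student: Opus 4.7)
The plan is to show $\eps$-node-privacy directly from the definition by comparing the densities of $\A(G)$ and $\A(G')$ at every point in $\R^p$, where $G$ and $G'$ are arbitrary node-neighbors.

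First I would write down the density of the output. Since the coordinates of the noise vector $(N_1,\dots,N_p)\sim\Lap(\Delta f/\eps)^p$ are independent with density $h(t)=\frac{\eps}{2\Delta f}\exp(-|t|\eps/\Delta f)$, the density of $\A(G)$ at a point $z\in\R^p$ is
\[
p_G(z)=\prod_{i=1}^p \frac{\eps}{2\Delta f}\exp\!\left(-\frac{\eps}{\Delta f}|z_i-f(G)_i|\right).
\]
An analogous formula holds for $p_{G'}(z)$.

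Next I would bound the ratio $p_G(z)/p_{G'}(z)$. The normalizing constants cancel, so the log-ratio is
\[
\log\frac{p_G(z)}{p_{G'}(z)}=\frac{\eps}{\Delta f}\sum_{i=1}^p \bigl(|z_i-f(G')_i|-|z_i-f(G)_i|\bigr).
\]
By the reverse triangle inequality each summand is at most $|f(G)_i-f(G')_i|$, so the whole expression is at most $\frac{\eps}{\Delta f}\,\|f(G)-f(G')\|_1$. Since $G$ and $G'$ are node-neighbors, the definition of global node-sensitivity gives $\|f(G)-f(G')\|_1\leq \Delta f$, so the log-ratio is at most $\eps$, i.e.\ $p_G(z)\leq e^{\eps}p_{G'}(z)$ pointwise.

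Finally I would integrate this pointwise inequality over an arbitrary measurable event $S\subseteq \R^p$ to conclude $\Pr[\A(G)\in S]\leq e^{\eps}\Pr[\A(G')\in S]$, which is exactly $\eps$-node-privacy. There is no real obstacle here beyond being careful that the Laplace density is everywhere positive (so ratios are well-defined) and that the sensitivity bound is invoked in the $\ell_1$ norm matching Definition~\ref{def:global-sensitivity}; the argument is the standard Laplace-mechanism proof of \cite{DMNS06} applied verbatim to the node-neighbor relation.
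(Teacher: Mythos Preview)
Your proof is correct and is exactly the standard density-ratio argument for the Laplace mechanism from \cite{DMNS06}. The paper does not actually prove this lemma---it is stated as background in Appendix~\ref{app:private} with a citation and no proof---so there is nothing to compare against beyond noting that your argument is the canonical one.
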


Thus, we can release the number of nodes, $v(G)$, in a graph $G$ with noise of expected magnitude $1/\eps$ while satisfying node differential privacy. Given a public bound $n$ on $v(G)$, we can release the number of edges, $e(G)$, with additive noise of expected magnitude $n/\eps$.

% This observation immediately leads to the following lemma, established in
%  \cite{KNRS13}. \cnote{I don't think we need this - I am only using Theorem~\ref{lem:DMNS}.}

% \begin{lemma}
% \label{lem:rho-algo} \anote{rewrite as additive bound?}
% Given $\eps>0$, there exists an $\eps$-node=private algorithm $A$ that takes as input
% a graph $G$ and a the parameter $\eps$ and outputs an estimate $\hrho$ to $\rho(G)$
% such that if $v(G)=n$ and $\rho(G)\geq \frac {10\log n}{\eps (n-1)}$, then with
% probability at least $1- 1/\log n$,
% \[
% |\hrho-\rho(G)|\leq \frac{\log\log n}{\log n}\rho(G).
% \]
% The algorithm runs in time $O(v(G)+e(G)) = O(n+n^2\rho(G))$.
% \end{lemma}

% \anote{rewrite given material above} The algorithm to achieve this guarantee is a simplification
% of an algorithm from \cite{KNRS13}, and consist of outputting $\rho(G)$ plus some noise to make the result
% differentially private.  The noise is chosen large enough to cover changes induced by changing one node (making the algorithm differentially private), and small enough to guarantee a small relative error if the number of edges
% in $G$ is large enough, see \cite{KNRS13} for details.

\mypar{Exponential Mechanism}
Sensitivity plays a crucial role in
another basic design tool for differentially private algorithms,
called the \emph{exponential mechanism}.

\newcommand{\hi}{i^*}

Suppose we are given a collection of $Q$ functions, $q_1,..,q_Q$ from
$\G_n$ to $\R$, each with sensitivity at most $\Delta$. The
exponential mechanism, due to \citet{MT07}, takes a data set (in our case, a graph $G$) and aims
to output the index $\hi$ of a function in the collection which has nearly maximal value
at $G$, that is, such that $q_\hi(G) \approx \max_i q_i(G)$. The
algorithm $\A$ samples an index $i$ such that $$\Pr(\A(G)=i)
\propto \exp\paren{\frac{\eps}{2\Delta}q_i(G)}\,.$$

\begin{lemma}[Exponential Mechanism \citep{MT07}, see also {\citep[Sec. 3.4]{DworkRoth14book}}]
\label{lem:exp-mech}
The algorithm $\A$
  is $\eps$-differentially private. Moreover, with probability at
  least $1-\eta$, its output $i^*$ satisfies $$q_{i^*} (G) \geq
  \max_i\paren{q_i(G)} - \frac{2 \Delta \ln(Q/\eta)}{\eps}\,. $$
\end{lemma}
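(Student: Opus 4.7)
The proof splits into two independent parts: the privacy claim and the utility (accuracy) claim. Both are standard once one writes down the relevant ratios and sums carefully, and the main ``obstacle'' is just keeping track of the factor of $2$ in the exponent $\tfrac{\eps}{2\Delta}$ that gets used twice in the privacy argument.

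\textbf{Privacy.} First I would fix an arbitrary index $i$ and a pair of node-neighbors $G,G'$, and bound the ratio
\[
\frac{\Pr[\A(G)=i]}{\Pr[\A(G')=i]}
\;=\;
\underbrace{\exp\!\paren{\tfrac{\eps}{2\Delta}\bigl(q_i(G)-q_i(G')\bigr)}}_{\text{(a)}}
\;\cdot\;
\underbrace{\frac{\sum_j \exp\!\paren{\tfrac{\eps}{2\Delta}q_j(G')}}{\sum_j \exp\!\paren{\tfrac{\eps}{2\Delta}q_j(G)}}}_{\text{(b)}}.
\]
For (a), the sensitivity hypothesis $|q_i(G)-q_i(G')|\leq\Delta$ gives a bound of $e^{\eps/2}$. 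For (b), applying the same sensitivity bound termwise, $\exp(\tfrac{\eps}{2\Delta}q_j(G'))\leq e^{\eps/2}\exp(\tfrac{\eps}{2\Delta}q_j(G))$, and summing yields another factor of $e^{\eps/2}$. Multiplying gives $e^{\eps}$, which is the definition of $\eps$-differential privacy. (The same calculation with $G,G'$ swapped gives the reverse bound, so no additional work is needed.)

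\textbf{Utility.} Set $M=\max_i q_i(G)$ and $t=\tfrac{2\Delta\ln(Q/\eta)}{\eps}$, and let $S_{t}=\{i : q_i(G)<M-t\}$ be the set of ``bad'' indices. The goal is to show $\Pr[\A(G)\in S_t]\leq\eta$. I would bound the numerator of $\Pr[\A(G)\in S_t]$ by
\[
\sum_{i\in S_t}\exp\!\paren{\tfrac{\eps}{2\Delta}q_i(G)} \;\leq\; Q\cdot\exp\!\paren{\tfrac{\eps}{2\Delta}(M-t)},
\]
and the denominator from below by retaining only the maximizing index,
\[
\sum_j\exp\!\paren{\tfrac{\eps}{2\Delta}q_j(G)} \;\geq\; \exp\!\paren{\tfrac{\eps}{2\Delta}M}.
\]
Dividing and substituting the chosen $t$ gives $\Pr[\A(G)\in S_t]\leq Q\exp(-\tfrac{\eps t}{2\Delta})=Q\cdot(\eta/Q)=\eta$, so with probability at least $1-\eta$ the sampled $i^*$ lies outside $S_t$, i.e.\ $q_{i^*}(G)\geq M-t$, which is exactly the claimed utility bound.

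\textbf{Main difficulty.} There is essentially no conceptual obstacle; the lemma is the standard analysis of the exponential mechanism from \citep{MT07}. The one place to be careful is ensuring the factor $\tfrac{1}{2\Delta}$ (rather than $\tfrac{1}{\Delta}$) in the exponent: it is needed precisely so that both (a) and (b) in the privacy argument contribute $e^{\eps/2}$ each, and this same factor of $2$ reappears in the utility bound as the constant in front of $\Delta\ln(Q/\eta)/\eps$.
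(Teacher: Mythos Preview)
Your proof is correct and follows essentially the same approach as the paper's own (suppressed) proof: both analyze privacy by bounding the numerator and normalization constant of $\Pr[\A(G)=i]$ separately by $e^{\eps/2}$ each, and both establish utility by comparing the probability mass on ``bad'' indices (those with $q_i(G)<\max_j q_j(G)-t$) to the mass on the maximizer, yielding the bound $Q e^{-\eps t/(2\Delta)}$.
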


\drop{
\begin{proof}
To see why the mechansim is private, note that
  $$\Pr(\A(G)=i)= \exp\paren{\frac \eps {2\Delta} q_i(G)} /
  C_{G}\, $$
  where $C_{G} = \sum_{j \in [Q]} \exp\paren{\frac \eps
    {2\Delta} q_j(G)}$ is a normalization constant. When
  $G$ changes to a vertex neighbor $G'$, both the numerator and
  denominator of $\Pr(\A(G)=i)$ change by a factor of at most
  $\exp(\pm \eps/2)$ (since,  for every $j$, the value of
  $q_j$ varies by at most $\Delta$), and hence the probability of
  observing $i$ varies by at most $\exp(\pm\eps)$, as desired.

To see why the mechanism produces an index of a function
reasonably close to the maximum, let $OPT=\max_i q_i(G)$ denote the maximum value
attained by any of the functions on input $G$, let, $Good$ denote the
set of functions which attain that value, and let $Bad_t$ denote
the set of functions in the collection with value less than
$OPT-\frac{2\Delta}{\eps} t$
for some $t>0$. For any two functions $i\in Good$ and $j \in Bad_t$,
the ratio of their probabilities of being output by the mechanism is
at least $\exp(t)$. Hence, we have
$$\Pr(Bad_t)\leq \frac{\Pr(Bad_t)}{\Pr(Good)} \leq |Bad_t|\cdot e^{-t}
\leq Q\cdot e^{-t}\, .$$
Setting $t=\ln(\eta/Q)$, we get the desired result.
\end{proof}
}

\mypar{Lipschitz Extensions}
There are cases (and we will encounter them in this paper), where the sensitivity of a function
can only be guaranteed to be low if the graph in question has sufficiently low degrees.  In this
situation, it is useful to consider extensions of these functions from graphs obeying a certain degree bound to those
without this restriction.

  \begin{Defi}[$\G_{n,d}$  and vertex extensions]
    Let $\G_{n,d}$ denote the set of graphs with degree at most
    $d$.  Given functions $f:\G_{n,d} \to \R$ and $\hat f:\G_n\to\R$, we
  say $\hat f$ is a vertex Lipschitz extension of $f$ from $\G_{n,d}$
  to $\G_n$ if $\hat f$ agrees with $f$ on $\G_{n,d}$ and $\hat f$ has
  the same node-sensitivity  as $f$,
  that is
  $$\sup_{G,G'\in \G_n \atop \text{vertex neighbors}} |\hat f(G)- \hat
  f(G')| = \sup_{G,G'\in \G_{n,d} \atop \text{vertex neighbors}} |f(G)-f(G')|\,. $$
  \end{Defi}

\newcommand{\extensionproof}
{
      \begin{proof}[Proof of Lemma~\ref{lem:Lip-extension}]
        The existence of $\hat f$ follows from a very general result
        (e.g., \cite{McShane34,Kirszbraun1934}), which states that for
        any metric spaces $X$ and $Y$ such that $Y\subset X$, and any
        Lipschitz function $f:Y\to\R$, there exists an extension $\hat
        f:X\to\R$ with the same Lipschitz constant. The explicit,
        efficient construction of extensions for linear functions is
        due to \citet{KNRS13}. The idea is to replace $f(G)$ with the
        maximum of $f(C)$ where $C$ ranges over weighted subgraphs of
        $G$ with (weighted) degree at most $d$.  It is the value of
        the following linear program:
$$       \hat f(G)= \max f(C) \text{ such that }
\begin{cases}
  C \in [0,1]^{n\times n}
  \text{ is symmetric, and} \\
  C_{i,j} \leq A(G)_{i,j} \text{ for all } i,j \text{, and} \\
  \sum_{j \neq i} C_{i,j} \leq d \text{ for all }i \in [n]\, .
\end{cases}
$$
See \cite{KNRS13} for the analysis of this program's
properties.\end{proof}
}

We close this section with the proof of
Lemma~\ref{lem:Lip-extension}.

\extensionproof

%{\blue With these preparations, we are now ready to prove the privacy
%of our main algorithm.}

\section{Auxiliary Bounds on Densities and Degrees}
\label{App:Aux}

\begin{lemma} \label{lem:good-trho} Let $W:[0,1]^2\to[0,\Lambda]$ be a normalized graphon,
 let $\rho\in (0,\Lambda^{-1}]$,
let $\Q=\Hn(\rho W)$, let $G=G_n(\rho W)$ and assume that $\rho n$ is bounded away from zero.
Then
\begin{enumerate}
   \item $\E \rho(\Q) =\E \rho(G) = \rho $,  $Var(\rho(\Q) )=O(\rho^2\Lambda/n)$
   and $Var(\rho(G))=O(\rho^2\Lambda/n)$, so in particular
   \[
   \Pr\{|\rho(G)-\rho|\geq\delta\rho\}=O\paren{\frac\Lambda{n\delta^2}}
   \qquad\text{and}\qquad
   \Pr\{|\rho(\Q)-\rho|\geq\delta\rho\}=O\paren{\frac\Lambda{n\delta^2}}.
   \]
   for any $\delta>0$.
\
\item
Let $\eps_n(W)=\|W-\W{\Q}\|_2$.
Then
\[
(1-\eps_n(W))\frac{n}{{n-1}}\leq\frac{\rho(\Q)}\rho\leq (1+\eps_n(W))\frac{n}{{n-1}}
.
\]
\
\item
Let $\delta\in 0<\delta<1$.  With probability at least $1-2e^{- \frac16 {\delta^2\rho(\Q) n^2}}$,
\[
1-\delta\leq\frac{\rho(G)}{\rho(\Q)}\leq 1+\delta.
\]
\end{enumerate}
\end{lemma}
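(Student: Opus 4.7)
The plan is to handle the three items separately: items~1 and 3 by standard concentration tools (Chebyshev and Chernoff respectively), while item~2 is a short deterministic manipulation of the $L_1$ norm.

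For item~1, the means follow from linearity. Because $x_i,x_j$ are independent for $i\neq j$, $\E \Q_{ij}=\rho\,\E W(x_i,x_j)=\rho\int W=\rho$, and $\E A_{ij}=\E[\E[A_{ij}\mid \Q]]=\E\Q_{ij}=\rho$; summing over the $\binom{n}{2}$ off-diagonal pairs and dividing gives $\E\rho(\Q)=\E\rho(G)=\rho$. For the variance of $\rho(\Q)$, I would write $\rho(\Q)=\frac{2\rho}{n(n-1)}U_n$ with $U_n=\sum_{i<j}W(x_i,x_j)$ a $U$-statistic of order $2$, and decompose $\mathrm{Var}(U_n)$ by how many vertex-indices a pair of summands shares. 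Pairs sharing no index are independent and contribute $0$. Pairs sharing both indices contribute at most $\binom{n}{2}\E W^2\leq \binom{n}{2}\Lambda$, using the key bound $\E W^2\leq \|W\|_\infty\int W=\Lambda$ (rather than $\Lambda^2$). The dominant contribution comes from the $O(n^3)$ pairs sharing exactly one index; their covariance equals $\mathrm{Var}\,g(x_1)$ where $g(x)=\int W(x,y)\,dy$, and since $g\leq\Lambda$ with $\E g=1$ we have $\mathrm{Var}\,g\leq \E g^2-1\leq \Lambda-1\leq\Lambda$. Hence $\mathrm{Var}(U_n)=O(n^3\Lambda)$ and $\mathrm{Var}(\rho(\Q))=O(\rho^2\Lambda/n)$. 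For $\mathrm{Var}(\rho(G))$, the law of total variance adds $\E[\mathrm{Var}(\rho(G)\mid\Q)]=O(\rho/n^2)$ from independent Bernoulli summation, which is dominated by $\mathrm{Var}(\rho(\Q))$ under the hypothesis $n\rho=\Omega(1)$. Chebyshev's inequality then yields both stated tail bounds.

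Item~2 is a deterministic manipulation. Because $\Q$ has vanishing diagonal, $\|\W{\Q}\|_1=\frac{1}{n^2}\sum_{i,j}\Q_{ij}=\frac{n-1}{n}\rho(\Q)$, and since $\Q=\rho\,\HnW$ this yields $\rho(\Q)/\rho=\frac{n}{n-1}\|\W{\HnW}\|_1$. The normalization $\int W=1$ says $\|W\|_1=1$, so it suffices to show $\bigl|\|\W{\HnW}\|_1-1\bigr|\leq \eps_n(W)$. The $L_1$ norm of a graphon is invariant under measure-preserving relabelings and is bounded by the $L_2$ norm on the unit square (Cauchy--Schwarz), so for any measure-preserving bijection $\phi:[0,1]\to[0,1]$,
\begin{equation*}
\bigl|\|\W{\HnW}\|_1-\|W\|_1\bigr|=\bigl|\|[\W{\HnW}]^{\phi}\|_1-\|W\|_1\bigr|\leq \|[\W{\HnW}]^{\phi}-W\|_2.
\end{equation*}
Taking the infimum over $\phi$ gives $\bigl|\|\W{\HnW}\|_1-1\bigr|\leq \eps_n(W)$, and rearranging produces the two-sided bound on $\rho(\Q)/\rho$.

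For item~3, I would condition on $\Q$ and apply a standard multiplicative Chernoff bound to $\binom{n}{2}\rho(G)=\sum_{i<j}A_{ij}$, which conditional on $\Q$ is a sum of $\binom{n}{2}$ independent $\{0,1\}$-valued random variables with mean $\binom{n}{2}\rho(\Q)$. This yields $\Pr(|\rho(G)/\rho(\Q)-1|\geq\delta\mid\Q)\leq 2\exp(-c\,\delta^2\rho(\Q)n(n-1))$ for a universal constant $c>0$, which after adjusting constants absorbs into the stated exponent. The main obstacle is the variance calculation in item~1: a naive estimate using $W\leq\Lambda$ twice would give $\mathrm{Var}(\rho(\Q))=O(\rho^2\Lambda^2/n)$ and a Chebyshev tail of $O(\Lambda^2/(n\delta^2))$, which is worse than the bound claimed. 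Achieving the factor of $\Lambda$ rather than $\Lambda^2$ requires combining $W\leq\Lambda$ with the normalization $\int W=1$ to bound the second moments $\E W^2$ and $\mathrm{Var}\,g(x_1)$ by $\Lambda$ rather than $\Lambda^2$ at each place they appear in the $U$-statistic decomposition.
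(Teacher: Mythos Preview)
Your proof is correct and follows essentially the same route as the paper. All three items match: the $U$-statistic covariance decomposition for $\mathrm{Var}(\rho(\Q))$ with the crucial bound $\E[W(x_i,x_j)W(x_k,x_\ell)]\leq \|W\|_2^2\leq \|W\|_\infty\|W\|_1=\Lambda$ (rather than $\Lambda^2$), the law of total variance for $\rho(G)$, the $\|\cdot\|_1\leq\|\cdot\|_2$ comparison for item~2, and multiplicative Chernoff conditioned on $\Q$ for item~3. One minor remark on item~2: in the lemma's local definition $\eps_n(W)$ is the plain $L_2$ distance $\|\W{\HnW}-W\|_2$ (no infimum over relabelings), so the paper simply applies $\bigl|\|\W{\HnW}\|_1-1\bigr|\leq\|\W{\HnW}-W\|_1\leq\|\W{\HnW}-W\|_2$ directly; your measure-preserving-invariance step is correct but not needed here.
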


\begin{proof}%[Proof of Lemma~\ref{lem:good-trho}]

1.  Clearly, $\E \rho(\Q) =\E \rho(G) = \rho $.

To bound  the variance of $\rho(\Q)$, we expand $\frac 1{\rho^2}Var(\rho(\Q))$
as a sum of $n^2(n-1)^2/4$ terms of the form
$\E[W(x_i,x_j)W(x_k,x_\ell)]-\E[W(x_i,x_j)]\E[W(x_k,x_\ell)]$
with  $i<j$ and $k<\ell$.  Observing that only those terms contribute for
which either $i=k$, $j=\ell$ or $j=k$, and bounding
$\E[W(x_i,x_j)W(x_k,x_\ell)]\leq \|W\|_2^2\leq \|W\|_\infty \|W\|_1$, we obtain
that the variance of $\frac 1\rho\rho(\Q)$ is $O(\Lambda/n)$.

To bound the variance of $\rho(G)$, we first condition on $X=(x_1,\dots,x_n)$, and bound
\[
\E[\rho^2(G)\mid X]
=\rho^2(\Q)+\frac 4{n^2(n-1)^2}\sum_{i<j}\bigl(\Q_{ij}-\Q^2_{ij}\bigr)
\leq \rho^2(\Q)+\frac 2{n(n-1)}\rho(\Q).
\]
Taking the expectation over $X$ and using the bound on the variance of $\rho(\Q)$, we obtain that
\[Var(\rho(G)) =
    O\Bigl(\frac{\rho^2\Lambda}{n}+\frac\rho{n^2}\Bigr)
    =O\Bigl(\frac{\rho^2\Lambda}{n}\Bigr)
    .
    \]
where in the last step we used the assumption that $\rho n$ is bounded away from zero.

2. Note that $\rho(\Q)=\frac n{n-1}\|\Q\|_1$.
Next, we use the triangle inequality and the fact that the $L_1$-norm is bounded by the $L_2$-norm to
see that
\[
\Bigl|\frac {\|\Q\|_1}\rho-1\Bigr|
=\Bigl|\frac {\|\Q\|_1}\rho-\|W\|_1\Bigr|
\leq \|\frac 1\rho \W{\Q}- W\|_1
\leq \|\frac 1\rho \W{\Q}- W\|_2=\eps_n(W)
\]

3. Conditioned on $X$, $S=\frac{n(n-1)}2\rho(G)$ is a sum of Bernouilli random variables with
mean $E(S)=  \frac{n(n-1)}2\rho(\Q)$.
By the multiplicative Chernov bound from Lemma~\ref{lem:chernoff-mult}, we have
that for all $\beta\leq 1$
  \[
  \Pr\Bigl\{|\rho(G) - \rho(\Q)| > \delta\rho(\Q)\, \Big |\, X\Bigr\}
  \leq 2\exp\Bigl(- \frac{\delta^2}3 \frac{n(n-1)}2\rho(\Q)\Bigr)
  \leq 2\exp\Bigl(- \frac{\delta^2}6 n^2\rho(\Q)\Bigr)
  .
  \]

\end{proof}

Next we bound the maximal degree in $G=G_n(\rho W)$.

\begin{lemma}\label{lem:max-degree}
Let $W$ be a normalized graphon with $\|W\|_\infty\leq \Lambda$,  let $0<\rho\Lambda\leq 1$, and let $\beta\geq 1$.
Then with probability at least $1-n\exp(-\beta\Lambda\rho n/3)$, the maximal degree in $G_n(\rho W)$ is bounded
by $(1+\beta)\Lambda \rho n$.
\end{lemma}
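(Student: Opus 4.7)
The plan is a straightforward Chernoff-plus-union-bound argument. Fix the latent positions $X=(x_1,\dots,x_n)$. Conditionally on $X$, the degree of vertex $i$ in $G_n(\rho W)$ is a sum
\[
d_i=\sum_{j\ne i}Y_{ij}
\]
of independent Bernoulli random variables with parameters $\rho W(x_i,x_j)\in[0,\rho\Lambda]$. In particular, the conditional mean $\mu_i=\E[d_i\mid X]$ satisfies $\mu_i\le (n-1)\rho\Lambda\le \Lambda\rho n$.

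Next I would apply the standard multiplicative Chernoff bound in the regime $\beta\ge 1$: if $S$ is a sum of independent $[0,1]$-valued random variables with mean $\mu$, then for every $M\ge\mu$ and every $\beta\ge 1$,
\[
\Pr\bigl(S\ge (1+\beta)M\bigr)\le \exp\!\bigl(-\beta M/3\bigr).
\]
(This is the same inequality invoked in the proof of Lemma~\ref{lem:concentration} and recorded as Lemma~\ref{lem:chernoff-mult} in the appendix; it follows either directly from the Chernoff/Bennett inequality or by noting that upper tails are monotone in the mean, so one may assume $\mu=M$ and apply the usual bound.) Taking $M=\Lambda\rho n$ gives, conditionally on $X$,
\[
\Pr\bigl(d_i\ge (1+\beta)\Lambda\rho n\mid X\bigr)\le \exp\!\bigl(-\beta\Lambda\rho n/3\bigr).
\]
Since the right-hand side does not depend on $X$, the same bound holds unconditionally.

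Finally, a union bound over the $n$ vertices shows
\[
\Pr\Bigl(\max_i d_i\ge (1+\beta)\Lambda\rho n\Bigr)\le n\exp\!\bigl(-\beta\Lambda\rho n/3\bigr),
\]
which is the claimed bound. There is no real obstacle here; the only minor subtlety is that the conditional mean $\mu_i$ need not equal $\Lambda\rho n$, but the monotonicity of upper Chernoff tails in the mean lets us use the worst-case value $\Lambda\rho n$ as the denominator in the exponent, which is exactly what is needed to get a bound depending only on $\Lambda$, $\rho$, $n$, and $\beta$.
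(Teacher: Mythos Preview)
Your proof is correct and follows essentially the same route as the paper: the paper phrases the first step as stochastic domination by an Erd\H{o}s--R\'enyi graph with edge probability $\rho\Lambda$, whereas you condition on the latent positions and bound each Bernoulli parameter by $\rho\Lambda$; these are equivalent, and both finish with the same Chernoff bound (with $\mu_0=\Lambda\rho n$, using the $\beta\ge 1$ case of Lemma~\ref{lem:chernoff-mult}) followed by a union bound over the $n$ vertices.
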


\begin{proof}
Note that the degrees in $G_n$ are stochastically
dominated by those in an Erdos-Renyi random graph where edges are chosen i.i.e. with probability $\rho \Lambda$.  In such a graph,
the degree of a given vertex $i$ is a sum of $n-1$ i.i.d Bernouilli random variables, and the standard Chernov bound implies that
for all $\beta\geq 1$
\[
\Pr\bigl(d_i\geq n\rho \Lambda (1+\beta)\bigr)
\leq exp\Bigl(-\frac\beta3 n\rho\Lambda\Bigr).
\]
Taking the union bound over all $n$ vertices in $G_n(\rho W)$ this proves the claim.
\end{proof}

\section{Convergence of the edge-probability matrix for $k$-block graphons}
\label{sec:sampling-k-block}

The sampling error $\eps_n(W)$ plays a key role in the statements of
Theorems \ref{thm:final'} and \ref{thm:final}. In some settings such
as H\"older-continuous graphons, this sampling error is dominated by
the oracle error $\oracle(W)$. For $k$-block graphons, however,
$\oracle(W)=0$ and $\eps_n(W)$ becomes more significant.

\begin{lemma}
  \label{lem:sampling-k-block}
  If $W$ is a $k$-block graphon, then $\E(\eps_n(W)^2)=O(\Lambda ^2 \sqrt{k/n})$
  and $\E(\eps_n(W))=O(\Lambda \sqrt[4]{k/n})$ where $\Lambda=\|W\|_\infty$.
\end{lemma}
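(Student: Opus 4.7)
Let $W$ be a $k$-block graphon with blocks $I_1,\dots,I_k$ of measures $\lambda_1,\dots,\lambda_k$, and write $W_{ij}$ for the constant value of $W$ on $I_i\times I_j$. Given the latent positions $x_1,\dots,x_n$, let $\beta_v\in[k]$ denote the block of vertex $v$, and let $N_i=|\{v:\beta_v=i\}|$ and $\hat\lambda_i=N_i/n$. The plan is to construct an explicit relabelling (bijection from vertices to the slots $\{[(l-1)/n,l/n]\}_{l=1}^n$ used to define $\W{\cdot}$) that makes the resulting block structure of $\W{H_n(W)}$ nearly align with that of $W$, and to use multinomial concentration to bound the residual error.

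The construction is as follows. For each $i$, allocate a contiguous set $S_i$ of $\lfloor n\lambda_i\rfloor$ or $\lceil n\lambda_i\rceil$ slots placed so that $S_1,\dots,S_k$ (in order) approximate $I_1,\dots,I_k$; by construction, at most $k-1$ slots cross a block boundary, so if $\psi(x)$ denotes the block containing $x$ and $\sigma(l)$ denotes the index such that slot $l\in S_{\sigma(l)}$, then $\bigl|\{x:\psi(x)\ne\sigma(l(x))\}\bigr|=O(k/n)$. Fill the slots in $S_i$ greedily with $\min(N_i,|S_i|)$ vertices from block $i$; the leftover vertices fill the remaining slots arbitrarily. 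The number of slots receiving a vertex from the ``wrong'' block is exactly $\sum_i\max(0,|S_i|-N_i)\le \sum_i|N_i-|S_i||\le k+n\sum_i|\hat\lambda_i-\lambda_i|$.

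Now, for $(x,y)$ in slot $l\times$ slot $m$, the integrand $|\W{H_n(W)}(x,y)-W(x,y)|^2=|W_{\beta_{v_l},\beta_{v_m}}-W_{\psi(x),\psi(y)}|^2$ is bounded by $4\Lambda^2$ and vanishes unless $\beta_{v_l}\ne\psi(x)$ or $\beta_{v_m}\ne\psi(y)$. Writing $M=\bigl|\{x:\beta_{v_{l(x)}}\ne\psi(x)\}\bigr|$, a union bound in $x,y$ gives
\[
\eps_n(W)^2\le \|\W{H_n(W)^{\mathrm{relabelled}}}-W\|_2^2\le 8\Lambda^2 M.
\]
Combining the two sources of misalignment (slot boundaries vs.\ block boundaries, and misplaced vertices in non-boundary slots) yields
\[
M\le \sum_i|\hat\lambda_i-\lambda_i|+O(k/n).
\]

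It remains to take expectations. Since $(N_1,\dots,N_k)$ is multinomial with parameters $(n;\lambda_1,\dots,\lambda_k)$, $\E[(\hat\lambda_i-\lambda_i)^2]=\lambda_i(1-\lambda_i)/n$, so $\E\bigl[\sum_i(\hat\lambda_i-\lambda_i)^2\bigr]\le 1/n$. By Cauchy--Schwarz,
\[
\E\Bigl[\sum_i|\hat\lambda_i-\lambda_i|\Bigr]\le\sqrt{k\cdot\E\Bigl[\sum_i(\hat\lambda_i-\lambda_i)^2\Bigr]}\le\sqrt{k/n}.
\]
Thus $\E[\eps_n(W)^2]\le 8\Lambda^2(\sqrt{k/n}+O(k/n))=O(\Lambda^2\sqrt{k/n})$, which is the first bound. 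The second bound follows from Jensen's inequality: $\E[\eps_n(W)]\le\sqrt{\E[\eps_n(W)^2]}=O(\Lambda\sqrt[4]{k/n})$.

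The main obstacle I anticipate is showing that one can indeed choose a relabelling so that $M$ is controlled by the $\ell_1$ deviation of the empirical block frequencies. The naive sort-by-block relabelling gives only $M=O(k)\cdot\sum_i|\hat\lambda_i-\lambda_i|$ (because shifting the boundary of one block shifts all later boundaries), which yields the wrong rate $k/\sqrt n$ instead of $\sqrt{k/n}$. The trick is to fix the slot assignment $S_i$ first (close to $I_i$) and then transport vertices into slots block by block; the number of misplaced vertices is then the honest $\ell_1$ distance between empirical and target block counts, for which Cauchy--Schwarz against the multinomial variance gives the desired $\sqrt{k/n}$.
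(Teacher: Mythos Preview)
Your proof is correct and follows essentially the same approach as the paper's: both construct an explicit permutation that aligns as many vertices as possible with their home blocks, bound the squared $\delta_2$-error by $O(\Lambda^2)$ times the (fractional) number of misaligned vertices, and control the latter via the $\ell_1$ deviation $\sum_i|\hat\lambda_i-\lambda_i|$ of the empirical block frequencies plus an $O(k/n)$ boundary term. The only cosmetic difference is in how the expected $\ell_1$ deviation is bounded---the paper uses per-coordinate Jensen ($\E|\hat p_t-p_t|\le\sqrt{p_t/n}$) followed by concavity of $\sqrt{\cdot}$, while you use Cauchy--Schwarz across coordinates; both yield $\sqrt{k/n}$.
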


\begin{proof} Fix a $k$-block graphon
  $W$, and let $p_1,...,p_k$ be the lengths of the ``blocks'', that is
  the intervals defining the block representation (so that $p_t\geq 0$
  and $\sum_t p_t=1$).  Given a sample $x_1,...,x_n$ of i.i.d. uniform
  values in $[0,1]$, let $\hat p_t$ denote the fraction of the $x_i$
  that land in each block $t$.

  Aligning $\Q = \paren{W(x_i,x_j)}_{i,j\in [n]}$ with $W$ consists of
  finding a permutation $\pi$ of $[n]$. This maps each $x_i$ to one of
  the intervals $I_1,..,I_n$ where $I_\ell =
  [\frac{\ell-1}{n},\frac{\ell}{n}]$.

  We say $x_i$ is correctly aligned if its interval $I_{\pi(i)}$ is contained in
  the block in which $x_i$  landed. For each
block $t$, we can ensure that $n\min\{ p_t,\hat p_t\} -2$ of the points
$x_i$ that landed in $t$ get aligned with $t$ (the $-2$ term accounts
for the fact that up to $1/n$ of the length at each end of the
interval does not line up exactly with one of the intervals $I_\ell$). Thus, the
number of points $x_i$ that get incorrectly aligned is at most $n\sum_t
\paren{\hat p_t - \min\{\hat p_t,p_t\}+2/n} = \frac n 2\|p - \hat p\|_1 +2k$.

Each misaligned point contributes at most $2\Lambda^2 /n$ to the total squared
error $\|W - \Q\|_2^2$, so we have
$$\hat\delta_2^2(W,\Q) \leq \Lambda^2 \paren{\|p-\hat
p\|_1 + 4k/n}.$$  Each term $|p_t-\hat p_t|$ in the $\ell_1$ norm on
the right-hand side is the
deviation of a binomial from it's mean, and has standard deviation
$\sqrt{p_t(1-p_t)/n}$. This upper bounds the expected absolute
deviation by Jensen's inequality. Thus, $\E (\hat\delta_2^2(W,\Q))\leq
\Lambda^2 \sum_t
\sqrt{p_t /n} + 4\Lambda^2 k/n$. The sum $\sum_t
\sqrt{p_t /n} $ is maximized when $p_t = 1/k$ for all $t$; it
then takes the value $k\sqrt{1/(kn)} =\sqrt{k/n}$. Hence
$\E(\hat\delta_2^2(W,\Q))\leq \Lambda^2 \sqrt{k/n}+ 4\Lambda^2 k/n \leq
5 \Lambda^2 \sqrt{k/n} $. By Jensen's inequality,
$\E(\delta_2(W,\Q))\leq  \sqrt{5} \Lambda\sqrt[4]{k/n}$, as desired.
\end{proof}

\begin{coro}
  For any graphon $W$, we have $\E(\eps_n(W)) \leq  2\oracle(W) +
 O(\sqrt[4]{k/n})$.
\end{coro}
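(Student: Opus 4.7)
The plan is to reduce to the $k$-block case (Lemma~\ref{lem:sampling-k-block}) via a triangle inequality argument, using a coupling between $\HnW$ and $\Hn(\tilde W)$ where $\tilde W$ is the optimal $k$-block approximation to $W$.

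First, let $B^*$ be a $k \times k$ matrix achieving the minimum in \eqref{eps-k} and set $\tilde W = \W{B^*}$, so that $\tilde W$ is a $k$-block graphon with $\|W-\tilde W\|_2 = \oracle(W)$. Sample $x_1,\dots,x_n$ i.i.d. uniform in $[0,1]$ and use these same sample points to construct both $\HnW$ and $\Hn(\tilde W)$. Then for any permutation $\sigma$ of $[n]$, the triangle inequality in $L_2$ gives
\[
\|\W{(\HnW)^\sigma} - W\|_2
\leq \|(\HnW)^\sigma - (\Hn(\tilde W))^\sigma\|_2 + \|\W{(\Hn(\tilde W))^\sigma} - \tilde W\|_2 + \|\tilde W - W\|_2,
\]
and since the first term equals $\|\HnW - \Hn(\tilde W)\|_2$ (relabeling is $L_2$-preserving), minimizing over $\sigma$ yields
\[
\eps_n(W) \;\leq\; \|\HnW - \Hn(\tilde W)\|_2 \;+\; \eps_n(\tilde W) \;+\; \oracle(W).
\]

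Next I take expectations term by term. Lemma~\ref{lem:sampling-k-block} applied to the $k$-block graphon $\tilde W$ gives $\E[\eps_n(\tilde W)] = O(\sqrt[4]{k/n})$. For the coupling term,
\[
\E\bigl[\|\HnW - \Hn(\tilde W)\|_2^2\bigr]
= \frac{1}{n^2}\sum_{i,j}\E\bigl[(W(x_i,x_j)-\tilde W(x_i,x_j))^2\bigr]
= \frac{n-1}{n}\oracle(W)^2 + O(\Lambda^2/n),
\]
where the $O(\Lambda^2/n)$ term comes from the $n$ diagonal entries. By Jensen's inequality,
\[
\E\bigl[\|\HnW - \Hn(\tilde W)\|_2\bigr]
\leq \sqrt{\oracle(W)^2 + O(\Lambda^2/n)}
\leq \oracle(W) + O(\Lambda/\sqrt{n}).
\]
Since $k\geq 1$ we have $\Lambda/\sqrt n = O(\sqrt[4]{k/n})$, so this residual is absorbed. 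Combining the three bounds yields $\E[\eps_n(W)] \leq 2\oracle(W) + O(\sqrt[4]{k/n})$, as claimed.

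The only mild subtlety is the coupling step: we must construct $\HnW$ and $\Hn(\tilde W)$ using the same $x_i$'s in order to get a cancellation between $W$ and $\tilde W$, rather than merely bounding each sampling error independently (which would give a diverging bound). Everything else is a routine application of the triangle inequality, Jensen, and Lemma~\ref{lem:sampling-k-block}.
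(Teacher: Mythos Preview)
Your proposal is correct and follows essentially the same approach as the paper: couple $\HnW$ and $\Hn(\tilde W)$ through the same sample points $x_1,\dots,x_n$, apply the triangle inequality to get $\eps_n(W)\leq \oracle(W)+\eps_n(\tilde W)+\|\HnW-\Hn(\tilde W)\|_2$, bound $\E[\eps_n(\tilde W)]$ via Lemma~\ref{lem:sampling-k-block}, and bound the coupling term in expectation by $\oracle(W)$ using Jensen. You are actually slightly more careful than the paper in tracking the diagonal contribution $O(\Lambda^2/n)$, which the paper's computation (writing $\frac{2}{n^2}\sum_{i<j}$) silently drops; your extra $O(\Lambda/\sqrt n)$ is indeed absorbed into $O(\sqrt[4]{k/n})$.
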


\begin{proof}
  Fix a matrix $W$, and let $W_P$ denote the best $k$-block
  approximation to $W$ in the $L_2$ norm (that is, the minimizer of
  $\oracle(W)$).
  Given a uniform i.i.d. sample in $x_1,...,x_n$, let $\HnW$ denote the matrix
  $\paren{W(x_i,x_j)}_{i,j\in [n]}$, and let $\Hn(W_P)$ denote
  $\paren{W_P(x_i,x_j)}_{i,j\in [n]}$.
  By the triangle inequality,
  $$\eps_n(W) = \hat \delta_2(W,\Hn) \leq \underbrace{\|W-W_P\|_2}_{\oracle(W)} + \eps_n(W_P) +
  \|\HnW-\Hn(W_P)\|_2.$$

  Lemma~\ref{lem:sampling-k-block} bounds $\eps_n(W_P)$ by $\sqrt[4]{k/n}$. It
  remains to bound the last term. Squaring it, we have
  $\|\HnW-\Hn(W_P)\|_2^2 = \frac 2 {n^2} \sum_{i<j}
  |(W-W_P)(x_i,x_j)|^2$. These terms are not independent, but each
  individually has expectation $\|W-W_P\|^2=\oracle(W)^2$. By linearity of
  expectation, $\E \|\HnW-\Hn(W_P)\|_2^2 \leq \oracle(W)^2$, and hence
  $\E  \|\HnW-\Hn(W_P)\|_2 \leq \oracle(W)$.
\end{proof}

\section{Bounds for H\"older-Continuous Graphons}
\label{sec:Holder}

In this section we prove Remark~\ref{rem:Hoelder-Cont-W}.
Throughout this section we assume that $W:[0,1]^2\to [0,1]$ is $\alpha$-H\"older continuous for some $\alpha\in (0,1]$, i.e., we assume that there exists a constant $C<\infty$ such that
\[
|W(x,y)-W(x',y')|\leq C \Bigl(|x-x'|+|y-y'|\Bigr)^\alpha.
\]

\begin{lemma}
\label{lem:W-WP}
Let  $H=\HnW$, and assume that the vertices of $H$ are reordered in such a way that $x_1<x_2<\dots,x_n$.
Then
\[
\|W-\W{\Hn}\|_2=O_P(n^{-\alpha/2}).
\]
\end{lemma}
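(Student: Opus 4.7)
The plan is to exploit the Hölder continuity directly and reduce to a standard bound on uniform order statistics. Since the vertices have been sorted so that $x_1<x_2<\dots<x_n$, on the rectangle $I_i\times I_j$ the step function $\W{H_n}$ equals $W(x_i,x_j)$, while $W(x,y)$ differs from $W(x_i,x_j)$ by at most $C(|x-x_i|+|y-x_j|)^\alpha$. Squaring and using the elementary inequality $(a+b)^{2\alpha}\leq c_\alpha(a^{2\alpha}+b^{2\alpha})$ for a constant $c_\alpha$ (valid for all $\alpha\in (0,1]$), one obtains
\[
\|W-\W{H_n}\|_2^2
\leq c_\alpha C^2 \sum_{i,j}\int_{I_i\times I_j}\bigl(|x-x_i|^{2\alpha}+|y-x_j|^{2\alpha}\bigr)dx\,dy
= 2 c_\alpha C^2 \sum_i \int_{I_i}|x-x_i|^{2\alpha}dx,
\]
using symmetry and $\sum_j|I_j|=1$. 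Thus the problem reduces to controlling a single sum in one variable.

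Next I would bound $|x-x_i|$ for $x\in I_i=[(i-1)/n,i/n]$ by inserting the deterministic midpoint $\tilde x_i=(2i-1)/(2n)$, so that $|x-x_i|\leq |x-\tilde x_i|+|\tilde x_i-x_i|\leq \tfrac{1}{2n}+|\tilde x_i-x_i|$. Applying $(a+b)^{2\alpha}\leq c_\alpha(a^{2\alpha}+b^{2\alpha})$ again and integrating term by term gives
\[
\sum_i\int_{I_i}|x-x_i|^{2\alpha}dx
\leq c_\alpha\Bigl(n^{-2\alpha}+\frac{1}{n}\sum_i|\tilde x_i-x_i|^{2\alpha}\Bigr).
\]
The first term is negligible (since $n^{-2\alpha}\leq n^{-\alpha}$), so the remaining task is to bound the average $\alpha$-moment of the gaps $|\tilde x_i-x_i|$ between the uniform order statistics $x_i=x_{(i)}$ and their expected positions.

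For this I would invoke the standard fact that for i.i.d.\ uniform samples, $\E[(x_{(i)}-i/n)^2]=O(1/n)$ uniformly in $i$ (this follows, e.g., from $x_{(i)}\sim\mathrm{Beta}(i,n-i+1)$, whose variance is $\tfrac{i(n-i+1)}{(n+1)^2(n+2)}=O(1/n)$, and the bias $|\E x_{(i)}-i/n|=O(1/n)$). Jensen's inequality with the concave function $t\mapsto t^{\alpha}$ (on $t\geq 0$) then gives $\E|\tilde x_i-x_i|^{2\alpha}\leq(\E|\tilde x_i-x_i|^2)^{\alpha}=O(n^{-\alpha})$, hence
\[
\E\|W-\W{H_n}\|_2^2 = O(n^{-\alpha}),
\]
and the claim $\|W-\W{H_n}\|_2=O_P(n^{-\alpha/2})$ follows from Markov's inequality.

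The only mildly delicate step is handling the regime $\alpha<1/2$, where the exponent $2\alpha<1$ means $(a+b)^{2\alpha}$ is subadditive rather than requiring a convexity correction; both cases fold into a single constant $c_\alpha$, so nothing genuinely obstructs the argument. The proof is otherwise a direct combination of Hölder continuity with the classical $O(1/\sqrt n)$ fluctuations of uniform order statistics.
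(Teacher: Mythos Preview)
Your proof is correct and follows essentially the same approach as the paper: reduce via H\"older continuity to controlling $|x-x_i|$, compare the order statistics to deterministic reference points, and use the $O(1/n)$ variance of uniform order statistics together with Jensen's inequality for $t\mapsto t^\alpha$. The only cosmetic differences are that the paper inserts the intermediate deterministic matrix $\tilde H_{ij}=W(\bar x_i,\bar x_j)$ with $\bar x_i=i/(n+1)$ and splits via the triangle inequality in $L_2$ (bounding $\|W-\W{\tilde H}\|_\infty$ separately), whereas you insert the midpoints $(2i-1)/(2n)$ directly inside the one-dimensional integral; and the paper uses the single inequality $(a+b)^{2\alpha}\le(2a^2+2b^2)^\alpha$ in place of your two applications of $(a+b)^{2\alpha}\le c_\alpha(a^{2\alpha}+b^{2\alpha})$.
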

\begin{proof}
We first approximate $W$ in terms of the weighted graph $\tilde H$ with weights $(\tilde H)_{ij}=W(\bar x_i,\bar x_j)$,
where $\bar x_i=\frac i{n+1}$ is the expectation of $x_i$.  Since $|x-\bar x_i|\leq \frac 1n$ when $x\in [\frac{i-1}n,\frac in]$,
we can use the H\"older continuity of $W$ to conclude that
\[
\|W-\W{\tilde H}\|_2\leq \|W-\W{\tilde H}\|_\infty\leq C\Bigl(\frac 2n\Bigr)^\alpha.
\]
To prove the lemma, it is therefore enough to prove that
\[
\E\Bigl[\|\W{\tilde H}-\W{H}\|_2^2\Bigr]=O(n^{-\alpha }),
\]
where $\E[\cdot]$ denotes expectations with respect to the random variables $ x_1,\dots,x_n$.
Using the H\"older continuity of $W$ together with Jensen's inequality, we
bound
\[
\begin{aligned}
\E\Bigl[\|\W{\tilde \Hn}-\W{H}\|_2^2\Bigr]
&=
\frac 1{n^2}\sum_{i,j\in [n]}\E\Bigl[\bigr(
W(\bar x_i,\bar x_j)-W(x_i,x_j)\bigl)^2\Bigr]
\\
&\leq
\frac {C^2}{n^2}\sum_{i,j\in [n]}
\E\Bigl[
\bigl(|\bar x_i-x_i|+|\bar x_j-x_j|\bigr)^{2\alpha}\Bigr]
\\&\leq
\frac {C^2}{n^2}\sum_{i,j\in [n]}
\E\Bigl[
\bigl(2|\bar x_i-x_i|^2+2|\bar x_j-x_j|^2\bigr)^{\alpha}\Bigr]
\\
&\leq
\frac {C^2}{n^2}\sum_{i,j\in [n]}\Bigl(
2\E\Bigl[|\bar x_i-x_i|^2\Bigr]+ 2\E\Bigl[|\bar x_j-x_j|^2\Bigr]\Bigr)^{\alpha }
\end{aligned}
\]
Using the fact that $x_i$ has expectation $\bar x_i=\frac{i}{n+1}$ and
variance $\frac 1{n+2}\bar x_i(1-\bar x_i)\leq \frac 1{4n}$, we bound the right hand side
by $C^2 n^{-\alpha }$, completing the proof.
\end{proof}

\begin{lemma}
\label{lem:eps-k}
Let $\PP_k$ be a partition of $[0,1]$ into adjacent intervals of lengths $1/k$.  Then
\[
\oracle(W)\leq
\|W-W_{\PP_k}\|_\infty \leq C \Bigl(\frac 2k\Bigr)^\alpha.
\]
\end{lemma}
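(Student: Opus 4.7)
The plan is to prove the two inequalities separately, both via essentially direct calculations using the definitions and Hölder continuity.

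For the first inequality $\oracle(W) \leq \|W - W_{\PP_k}\|_\infty$, I would appeal to the definition \eqref{eps-k}, together with the observation already made in the paper that the $L_2$-minimizer over $k\times k$ matrices $B$ in $\oracle(W) = \min_B \|W - \W{B}\|_2$ is precisely $W_{\PP_k}$. Since $[0,1]^2$ has Lebesgue measure one, the $L_2$ norm is dominated by the $L_\infty$ norm, so
\[
\oracle(W) \leq \|W - W_{\PP_k}\|_2 \leq \|W - W_{\PP_k}\|_\infty.
\]
Alternatively, even without invoking the optimality of $W_{\PP_k}$, one may simply note that $W_{\PP_k}$ is of the form $\W{B}$ for the matrix $B$ with entries $B_{ij} = (W/\PP_k)_{ij}$ and directly apply the inequality $\|\cdot\|_2 \leq \|\cdot\|_\infty$.

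For the second inequality, I would fix $(x,y)\in[0,1]^2$ and let $Y_i, Y_j$ be the blocks of $\PP_k$ containing $x$ and $y$ respectively, so that $|x-u|\leq 1/k$ and $|y-v|\leq 1/k$ for all $(u,v)\in Y_i\times Y_j$. By definition of the averaging operator,
\[
W_{\PP_k}(x,y) = k^2 \int_{Y_i \times Y_j} W(u,v)\, du\, dv,
\]
so that
\[
|W(x,y) - W_{\PP_k}(x,y)| = \left| k^2 \int_{Y_i\times Y_j} \bigl(W(x,y) - W(u,v)\bigr)\, du\, dv \right|.
\]
Pulling the absolute value inside the integral and applying $\alpha$-Hölder continuity gives $|W(x,y)-W(u,v)|\leq C(|x-u|+|y-v|)^\alpha \leq C(2/k)^\alpha$ on the domain of integration, and since the outer factor $k^2$ cancels against the measure $(1/k)^2$ of $Y_i\times Y_j$, we conclude $|W(x,y) - W_{\PP_k}(x,y)| \leq C(2/k)^\alpha$. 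Taking the supremum over $(x,y)$ yields the claimed bound.

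There is no real obstacle here; the lemma is essentially a direct consequence of the definition of $W_{\PP_k}$ as a blockwise average, the trivial domination $\|\cdot\|_2 \leq \|\cdot\|_\infty$ on a probability space, and the Hölder hypothesis applied at the block scale $1/k$. The only thing to be careful about is verifying that $W_{\PP_k}$ is an admissible competitor in the $\min$ defining $\oracle(W)$, which is immediate since $\PP_k$ partitions $[0,1]$ into $k$ intervals of equal length $1/k$, matching the embedding $\W{\cdot}$ used in \eqref{eps-k}.
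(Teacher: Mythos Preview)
Your proposal is correct and follows essentially the same approach as the paper. The paper's proof is terser---it only explicitly argues the second inequality by noting that $W_{\PP_k}(x,y)$ is an average of $W$ over $I_i\times I_j$ and invoking H\"older continuity---but your more detailed version (writing out the integral, bounding $|x-u|+|y-v|\leq 2/k$, and handling the first inequality via $\|\cdot\|_2\leq\|\cdot\|_\infty$) is the same argument unpacked.
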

\begin{proof}
Let $\PP_k=(I_1,\dots,I_k)$.  For $(x,y)\in I_i\times I_j$, $W_{\PP_k}(x,y)$ is an average over points in $I_i\times I_j$,
implying that $|W(x,y)-W_{\PP_k}(x,y)|\leq C(2/k)^\alpha.$
\end{proof}

\section{Consistency of Multi-way cuts}\label{app:cuts}

In this appendix, we prove the following theorem
which implies Theorem~\ref{thm:cuts} by the same arguments
as those which lead
from Theorems~\ref{thm:H} and \ref{thm:H'} to  Theorems~\ref{thm:final} and \ref{thm:final'}.

\begin{theorem}\label{thm:cuts-from-H}
Let $q\geq 2$ be an integer.

(i)
Under the assumptions of Theorem~\ref{thm:final'},
\begin{equation}\label{S-non-private}
d_\infty^{\text{Haus}}(S_q(G),\hat S_q(\hat B_{\text{nonprivate}}))\leq
2\horacle(\HnW)
+O_P\paren{\sqrt[4]{
        \lambda^2\paren{\frac{k^2}{ n^2\rho}
            + \frac{\log k}{ n\rho}
        }}}.
\end{equation}

(ii)
Under the assumptions of Theorem~\ref{thm:final},
\begin{equation}\label{S-private}
d_\infty^{\text{Haus}}(S_q(G),\hat S_q(\hat B_{\text{private}}))\leq
2\horacle(\HnW)+O_p\paren{
\sqrt[4]{\frac{\lambda^2\log k}{\rho n}
}
 +\lambda\sqrt{\frac{k^2\log n}{n\eps}}+\frac {{ \lambda}}{n\rho\eps}}.
\end{equation}
\end{theorem}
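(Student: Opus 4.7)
The plan is to reduce Theorem~\ref{thm:cuts-from-H} to the $\hat\delta_2$-estimation bounds of Theorems~\ref{thm:H'} (nonprivate) and \ref{thm:H} (private) by invoking the quantitative continuity of the fractional $q$-way cut functional with respect to the cut distance $\delta_\square$, as established in \cite{BCLSV12,BCCZ14a,BCCZ14b}. The crucial input is that for any two (normalized) weighted graphons $U,W$ one has a quantitative inequality of the form $d_\infty^{\text{Haus}}(\hat S_q(U), \hat S_q(W)) \leq c_q\, \delta_\square(U,W)$ (with $c_q$ depending only on $q$), together with the analogous bound comparing a graph's integer cuts to a graphon's fractional cuts. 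Crucially, since the $\beta_{ij}$-entries of a fractional quotient are self-normalized by $\|W\|_1$, the set $\hat S_q(\hat B)$ is unchanged under rescaling of $\hat B$, so we are free to work with any convenient normalization of $\hat B$.

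Write $\rho' = \rho(G)$ in case (i) and $\rho' = \hat\rho$ in case (ii). The plan is to apply the triangle inequality for Hausdorff distance, inserting the reference graphon $\HnW$ as an intermediate point:
\[
d_\infty^{\text{Haus}}(S_q(G), \hat S_q(\hat B/\rho'))
\leq d_\infty^{\text{Haus}}(S_q(G), \hat S_q(\HnW))
+ d_\infty^{\text{Haus}}(\hat S_q(\HnW), \hat S_q(\hat B/\rho'))\, .
\]
For the first summand, the concentration of a $W$-random graph around its probability matrix in the cut metric (Lemma~\ref{lem:good-trho} and \cite{BCCZ14a}) gives $\delta_\square(\tfrac 1{\rho(G)}\W{G}, \HnW) = O_P(1/\sqrt{n\rho})$, which is absorbed into the $O_P$-error terms of the theorem statement. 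For the second summand, I use $\delta_\square \leq \hat\delta_2$ and invoke Theorem~\ref{thm:H'} (case (i)) or Theorem~\ref{thm:H} (case (ii)) to bound
\[
\delta_\square(\HnW, \hat B/\rho') \;\leq\; \hat\delta_2(\HnW, \hat B/\rho') \;\leq\; \horacle(\HnW) + O_P(\mathrm{error})\,,
\]
with the error terms as stated in the respective theorems.

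The factor of $2$ in front of $\horacle(\HnW)$ in the final bound arises from the constant $c_q$ in the Hausdorff-vs-$\delta_\square$ inequality from \cite{BCCZ14b}, combined with the triangle inequality passing through the oracle minimizer $B^\star \in \arg\min_B \hat\delta_2(B,\HnW)$: writing $\hat\delta_2(\HnW, \hat B/\rho') \leq \hat\delta_2(\HnW, B^\star) + \hat\delta_2(B^\star, \hat B/\rho')$ and bounding the latter summand by a further appeal to Theorem~\ref{thm:H'} or \ref{thm:H} introduces an additional copy of $\horacle(\HnW)$. Taken together, and absorbing the $q$-dependent constants into the $O_P$-notation, this yields the claimed bounds in both cases.

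The main obstacle will be verifying the precise form and constants of the Hausdorff-vs-$\delta_\square$ inequality from \cite{BCCZ14b}, and confirming that the coefficient in front of $\horacle(\HnW)$ is exactly $2$ (or at most $2$, with any slack absorbed into the $O_P$-terms). A secondary issue, relevant only in the private case, is handling the normalization discrepancy between $\hat\rho$ and $\rho(G)$: since $\hat\rho - \rho(G) = \Lap(4/(n\eps))$ has magnitude $O_P(1/(n\eps))$, the relative normalization error $|1 - \hat\rho/\rho(G)|$ contributes at most a term of order $\lambda/(n\rho\eps)$, which is already accounted for in the $O_P$-error terms of the theorem statement via the auxiliary density-concentration bounds of Appendix~\ref{App:Aux}.
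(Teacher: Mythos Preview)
Your overall plan---triangulate through $\hat S_q(\HnW)=\hat S_q(\Q)$, bound the first leg by cut-metric concentration of $G$ around $\Q$ together with the integer-to-fractional rounding bound, and bound the second leg by $\delta_\square\leq\delta_2$ applied to the estimation theorems---is exactly the paper's route. Two points need correction.

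\medskip
\textbf{The factor $2$ is not what you say.} The paper's Hausdorff-vs-cut bounds (equations \eqref{cut-bound} and \eqref{cut-bound-W}) carry constant $1$, not some $c_q$, and there is no triangle inequality through the oracle minimizer $B^\star$; your proposed decomposition $\hat\delta_2(\HnW,\hat B/\rho')\leq \hat\delta_2(\HnW,B^\star)+\hat\delta_2(B^\star,\hat B/\rho')$ followed by another appeal to Theorem~\ref{thm:H'} is circular and would produce $3\horacle$, not $2$. The actual source of the doubling is a \emph{normalization} step. The bound \eqref{cut-bound-W} reads
\[
d_\infty^{\text{Haus}}\bigl(\hat S_q(U),\hat S_q(V)\bigr)\leq \delta_\square\Bigl(\tfrac{1}{\|U\|_1}U,\tfrac{1}{\|V\|_1}V\Bigr),
\]
so both arguments must be $L_1$-normalized. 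Theorem~\ref{thm:H'} (resp.\ \ref{thm:H}) only gives you control of $\delta_\square$ between $\tfrac{1}{\|\Q\|_1}\W{\Q}$ and $\tfrac{1}{\rho'}\W{\hat B}$, and $\tfrac{1}{\rho'}\W{\hat B}$ is not $L_1$-normalized. To fix this you use \eqref{norm-cut}, $\bigl|\|U\|_1-\|V\|_1\bigr|\leq\delta_\square(U,V)$: if $\delta_\square(\tfrac{1}{\|\Q\|_1}\W{\Q},\tfrac{1}{\rho'}\W{\hat B})\leq \alpha$ then $\bigl|\tfrac{\|\hat B\|_1}{\rho'}-1\bigr|\leq \alpha$, whence $\delta_\square(\tfrac{1}{\|\Q\|_1}\W{\Q},\tfrac{1}{\|\hat B\|_1}\W{\hat B})\leq 2\alpha$. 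That is where the $2\horacle(\HnW)$ comes from.

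\medskip
\textbf{Minor gap: $\delta_\square\leq\hat\delta_2$ is not literally true.} The quantity $\hat\delta_2(\hat B,\HnW)$ in Theorems~\ref{thm:H'} and~\ref{thm:H} is the minimum of $\|\hat B_\pi-\HnW\|_2$ over \emph{equipartitions} $\pi:[n]\to[k]$, and when $k\nmid n$ the step function $\W{\hat B_\pi}$ is not a measure-preserving rearrangement of $\W{\hat B}$. The paper handles this (``proceed as in the proof of \eqref{H-to-W-bd1}'') via Lemma~\ref{lem:equi-part-bd}, which gives $\|\W{\hat B}-\W{\hat B_\pi}\|_2\leq \sqrt{4k/n}\,\|\hat B\|_2$; this correction is absorbed into the $O_P$ terms. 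You should make this step explicit rather than asserting $\delta_\square\leq\hat\delta_2$.
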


Before we prove the theorem,
we start with a few bounds on the Hausdorff distance of various sets of $q$-way cuts.
First, using the  definition of the cut-distance (and the fact that the set of $q$-way cuts of a graph is invariant under relabelings), it is easy
  to see (see also \cite{BCCZ14b}) that
   whenever $G$ and $G'$ are weighted graphs on $[n]$ and $\PP$ is a partition of $[n]$, then
%
%   not hard to show that in the Hausdorff distance with respect to the
%$\ell_\infty$ norm on on the set of weighted graphs on $q$ nodes,
\begin{equation*}
\|G/\PP-G'/\PP\|_\infty
\leq \hat\delta_\square\Bigl(\frac 1{\|G\|_1}G,\frac 1{\|G'\|_1}G'\Bigr)
 \end{equation*}
implying that
 \begin{equation}
\label{cut-bound}
d_\infty^{\text{Haus}}(S_q(G),S_q(G'))
\leq \hat\delta_\square\Bigl(\frac 1{\|G\|_1}G,\frac 1{\|G'\|_1}G'\Bigr).
 \end{equation}
In a similar way, we have that for two graphons $W,W'$,
 \begin{equation}
\label{cut-bound-W}
d_\infty^{\text{Haus}}(\hat S_q(W),\hat S_q(W'))
\leq \delta_\square\Bigl(\frac 1{\|W\|_1}W,\frac 1{\|W'\|_1}W'\Bigr).
 \end{equation}
We will also need to compare the fractional and integer cuts, $\hat S_q(G)$ and $S_q(G')$.
To do so, one can use a simple rounding argument,
as in Theorem 5.4 and its proof from \cite{BCLSV12}.  This gives the bound%
\footnote{To translate the results from \cite{BCLSV12} into
\eqref{S-hatS-bound}, we need to take into account that in
\cite{BCLSV12}, quotients where defined with a normalization of $\frac 1{n^2}$ instead of $\frac 1{n^2\|G\|_1}$ (leading to the factor
$\frac 1{\|G\|_1}$ on the right hand side of \eqref{S-hatS-bound}),
and that Hausdorff distances were defined with respect to the $L_1$-norm
 (leading to a bound which is better by a factor $q$ than the bounds in
 \cite{BCLSV12}).}
 \begin{equation}
\label{S-hatS-bound}
d_\infty^{\text{Haus}}(S_q(G),\hat S_q(G))
\leq \frac 5{\|G\|_1\sqrt n},
 \end{equation}
valid for any weighted graph $G$ with node weights $1$ on $[n]$ and maximal edge-weight $1$.
We also note that for any weighted graph $\Q$,
\begin{equation}
\label{H-WH-cut}
\hat S_q(\Q)=\hat S_q(\W{\Q})
\end{equation}
(see
\cite[Proposition 5.3]{BCLSV12} \footnote{Note that in \cite{BCLSV12}
  the notation
for integer and fractional partitions is the reverse of the one used here and in \cite{BCCZ14b}.}).
Finally, we note that $\bigl|\|W\|_1-\|W'\|_1\bigr|\leq
\|W-W'\|_\square$. In particular,
\begin{equation}
\label{norm-cut}
\Bigl|\|G\|_1-\|G'\|_1\Bigr|\leq \hat\delta_\square(G,G')
\quad\text{and}\quad
\Bigl|\|W\|_1-\|W'\|_1\Bigr|\leq \delta_\square(W,W').
\end{equation}

\begin{proof}[Proof of Theorem~\ref{thm:cuts-from-H}]
Let $\Q=\Hn(\rho W)$ and $G=G(\Q)$.
By Lemma~\ref{lem:good-trho}, we have that $\rho(\Q)\in [\rho/2,2\rho]$
with probability at least $1-O(\Lambda/n)$. By the assumptions of the two theorems,
$\rho n\geq 2\log 2$. We apply
\cite[Lemma 7.2] {BCCZ14a} to show that
$
\hat\delta_\square(G,\Q)=\rho O\Bigl(\sqrt{\frac 1{\rho n}}\Bigr)
$
with probability at least
$1-O(\Lambda/n)$.  As a consequence, again with probability $1-O(\Lambda/n)$,

\[
\hat\delta_\square\paren{\frac 1{\|\Q\|_1}\Q,\frac 1{\|\Q\|_1}G}=O\Bigl(\sqrt{\frac 1{\rho n}}\Bigr).
\]
By \eqref{cut-bound} and \eqref{norm-cut}, this implies that with the same probability
\[
d_\infty^{\text{Haus}}(S_q(G), S_q(\Q))=
O\Bigl(\sqrt{\frac 1{\rho n}}\Bigr).
\]

Next we apply \eqref{S-hatS-bound} to the weighted graph
$\Q'=\frac 1{\|\Q\|_\infty} \Q$.  Since
$\frac 1{\|\Q'\|_1}=\frac{\|\Q\|_\infty}{\|\Q\|_1}\leq\frac{\Lambda\rho}{\|\Q\|_1}$, we conclude that with probability at least
$1-O(\Lambda/n)$,
\[
d_\infty^{\text{Haus}}(S_q(\Q), \hat S_q(\Q))=
d_\infty^{\text{Haus}}(S_q(\Q), \hat S_q(\Q'))=
O\Bigl(\sqrt{\frac 1{\rho n}}+\frac\Lambda{\sqrt n}\Bigr).
\]
Since $\|F\|_\infty\leq 1$ for all $F\in S_q(G)$ and all $F\in \hat S_q(\Q)$,
we can easily absorb the failure event, getting
\begin{equation}
\label{cuts-almost-done}
d_\infty^{\text{Haus}}(S_q(G), \hat S_q(\Q))=
O_P\Bigl(\sqrt{\frac 1{\rho n}}+\frac\Lambda{\sqrt n}\Bigr).
\end{equation}
To complete the proof, we proceed as in the proof of \eqref{H-to-W-bd1} to show that
\[
\delta_2\paren{ \HnW,\frac 1{\rho(G)}\hat B}
\leq
\hat\delta_2\paren{\frac 1{\rho(G)}\hat B,\HnW}
+
O_P\paren{\sqrt[4]{
        \lambda^2\paren{\frac{k^2}{ n^2\rho}
            + \frac{\log k}{ n\rho}
        }}}.
\]
Combined with the bound from  Theorem~\ref{thm:final'}, the fact
that the cut-norm is bounded by the $L_2$-norm,
and the fact that $\delta_\square(\HnW,\frac 1{\|\Q\|_1} \Q)
\leq |1-\rho(\Q)/\rho|=O_P(\Lambda/n)$, we conclude that

\[
\delta_\square\Bigl(\frac 1{\|\Q\|_1}\Q,\frac 1{\rho(G)}\W{\hat B}\Bigr)
\leq \horacle(\HnW)
+O_P\paren{\sqrt[4]{
        \lambda^2\paren{\frac{k^2}{ n^2\rho}
            + \frac{\log k}{ n\rho}
        }}}.
\]
Combined with \eqref{H-WH-cut}, \eqref{cut-bound-W}, \eqref{norm-cut},  and the bound \eqref{cuts-almost-done},
this proves \eqref{S-non-private}.  The proof of \eqref{S-private} is
essentially identical, except that now we use Theorem~\ref{thm:final}.
\end{proof}

\section{Useful Lemmas}

    \begin{lemma}[Multiplicative Chernoff bound]\label{lem:chernoff-mult}
      Let $X_1,X_2,...,X_n$ be independent random variables taking
      values in
      $[0,1]$, and let $X=\sum_{i=1}^nX_i$. If $\E(X)\leq \mu_0$ and
      $\beta\leq 1$,
      then $$\Pr(|X- \E(X)| \geq \beta \mu_0 ) \leq
      2\exp(-\beta^2\mu_0/3)\,.$$
      For $\beta>1$, the probability is at most $$\Pr(|X- \E(X)| \geq \beta \mu_0 ) \leq
      2\exp(-\beta\mu_0/3)\,.$$
    \end{lemma}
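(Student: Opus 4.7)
The plan is to apply the standard Chernoff--Cram\'er moment generating function method, handling the upper and lower tails separately; the factor of $2$ in the conclusion absorbs the resulting union bound. Two minor features of the statement to track are (a) the variables lie in $[0,1]$ rather than $\{0,1\}$, and (b) $\mu_0$ is allowed to strictly exceed $\E X$, whereas the textbook statement has equality.

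First, I would establish the MGF bound in the usual way: for any $Y\in[0,1]$ and $t\ge 0$, convexity of $u\mapsto e^{tu}$ on $[0,1]$ yields $e^{tY}\le 1+Y(e^t-1)$, hence $\E[e^{tY}]\le\exp(\E[Y](e^t-1))$. By independence, $\E[e^{tX}]\le\exp(\mu(e^t-1))$ with $\mu:=\E X\le\mu_0$. Markov's inequality then gives, for every $t\ge 0$, that $\Pr(X\ge \mu+\beta\mu_0)\le\exp\bigl(\mu(e^t-1)-t(\mu+\beta\mu_0)\bigr)$. Optimizing at $t=\log(1+\beta\mu_0/\mu)$ produces the familiar bound $\exp(-\mu\,\psi(\beta\mu_0/\mu))$, where $\psi(u):=(1+u)\log(1+u)-u$.

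Next I would invoke the elementary (Bennett-type) inequality $\psi(u)\ge u^2/(2+2u/3)$ for $u\ge 0$, which bounds the upper tail by $\exp\bigl(-(\beta\mu_0)^2/(2\mu+\tfrac{2}{3}\beta\mu_0)\bigr)$. Using $\mu\le\mu_0$: for $\beta\le 1$ the denominator is at most $\tfrac{8}{3}\mu_0$, yielding $\exp(-\tfrac{3}{8}\beta^2\mu_0)\le\exp(-\beta^2\mu_0/3)$; for $\beta>1$ it is at most $\tfrac{8}{3}\beta\mu_0$, yielding $\exp(-\tfrac{3}{8}\beta\mu_0)\le\exp(-\beta\mu_0/3)$. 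The lower tail is either vacuous (when $\beta\mu_0>\mu$, since $X\ge 0$) or is handled by the identical MGF argument with $t<0$, which gives the strictly stronger bound $\exp(-\beta^2\mu_0^2/(2\mu))\le\exp(-\beta^2\mu_0/2)$. Combined with the factor of $2$ from the union bound, both claims follow.

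There is no real obstacle in this argument; the only mild subtlety is the gap between $\mu_0$ and $\mu$, which one might worry could weaken the standard statement, but in fact it only helps --- in the textbook version with $\mu_0=\mu$, the corresponding deviation $\beta$ is replaced by $\beta\mu_0/\mu\ge\beta$, so the resulting bound is at least as strong as that of a $\mu_0=\mu$ Chernoff inequality applied with the same $\beta$. Carefully tracking the constants in the two regimes $\beta\le 1$ and $\beta>1$, as above, completes the proof.
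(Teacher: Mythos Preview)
Your proof is correct and follows essentially the same route as the paper's: reduce to the standard multiplicative Chernoff bound with true mean $\mu=\E X$, then handle the slack $\mu\le\mu_0$ by the substitution $\delta\mu=\beta\mu_0$ (equivalently, noting that $\beta\mu_0/\mu\ge\beta$ only strengthens the exponent). The only difference is that the paper simply quotes the standard bound $\Pr(|X-\mu|\ge\delta\mu)\le 2\max(e^{-\delta^2\mu/3},e^{-\delta\mu/3})$ as a black box and does the substitution in two lines, whereas you rederive it from the MGF and Bennett's inequality; your version is more self-contained but not a genuinely different argument.
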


    \begin{proof}
      Let $\mu=\E(X)$ denotes the exact mean of $X$ (so $\mu\leq
      \mu_0$). The standard multiplicative form of the Chernoff bound
      states that for $\delta>0$ (not necessarily less than 1), we have
      $$\Pr(|X - \E X|\geq \delta \mu) \leq 2\max (e^{-\frac 1 3
      \delta^2 \mu}, e^{-\frac 1 3
      \delta \mu})\,.$$
    Setting $\delta\mu = \beta\mu_0$ (that is, $\delta =
    \frac{\beta\mu_0}{\mu}$), the bound above becomes
    $\displaystyle 2 \max (e^{-\frac 1 3
      \frac{\beta^2\mu_0^2}{\mu}}, e^{-\frac 1 3
      \beta\mu_0})\,.$ Both of these terms are bounded above by
    $2\exp(-\beta^2\mu_0/3)$: the first, since $\mu_0\leq \mu$; and the
    second, since $\beta\leq 1$.
    \end{proof}

\end{document}